\newtheorem{theorem}{Theorem}[section]
\newtheorem{definition}[theorem]{Definition}
\newtheorem{conjecture}[theorem]{Conjecture}
\newtheorem{question}[theorem]{Question}
\newtheorem{lemma}[theorem]{Lemma}
\newtheorem{proposition}[theorem]{Proposition}
\newcommand*{\rom}[1]{\expandafter\@slowromancap\romannumeral #1@}
\def\ex{\mathrm{ex}}
\def\Aut{\mathrm{Aut}}
\def\QED{{\hfill\enspace\vrule height8pt depth0pt width8pt}}
\begin{document}

\title{Supersaturation beyond color-critical graphs}
\author{
Jie Ma$^{1,3}$\and
\and
Long-Tu Yuan$^2$
}
\date{}
\maketitle

\footnotetext[1]{School of Mathematical Sciences, University of Science and Technology of China, Hefei, Anhui, 230026, China. Research supported by National Key Research and Development Program of China 2020YFA0713100, National Natural Science Foundation of China grant 12125106, and Anhui Initiative in Quantum Information Technologies grant AHY150200. Email: jiema@ustc.edu.cn.}
\footnotetext[2]{School of Mathematical Sciences, Key Laboratory of MEA(Ministry of Education) \& Shanghai
 Key Laboratory of PMMP, East China Normal University, Shanghai 200240, China.
Research supported by National Natural Science Foundation of China grant 12271169, 12331014 and Science and Technology Commission of Shanghai Municipality 22DZ2229014.
Email: ltyuan@math.ecnu.edu.cn.}
\footnotetext[3]{Hefei National Laboratory, University of Science and Technology of China, Hefei 230088, China. Research supported by Innovation Program for Quantum Science and Technology 2021ZD0302902.}

\begin{abstract}
The supersaturation problem for a given graph $F$ asks for the minimum number $h_F(n,q)$ of copies of $F$ in an $n$-vertex graph with $\ex(n,F)+q$ edges.
Subsequent works by Rademacher, Erd\H{o}s, and Lov\'{a}sz and Simonovits determine the optimal range of $q$ (which is linear in $n$) for cliques $F$ such that
$h_F(n,q)$ equals the minimum number $t_F(n,q)$ of copies of $F$ obtained from a maximum $F$-free $n$-vertex graph by adding $q$ new edges.
A breakthrough result of Mubayi extends this line of research from cliques to color-critical graphs $F$,
and this was further strengthened by Pikhurko and Yilma who established the equality $h_F(n,q)=t_F(n,q)$ for $1\leq q\leq \epsilon_F n$ and sufficiently large $n$.
In this paper, we present several results on the supersaturation problem that extend beyond the existing framework.
Firstly, we explicitly construct infinitely many graphs $F$ with restricted properties for which $h_F(n,q)<q\cdot t_F(n,1)$ holds when $n\gg q\geq 4$, thus refuting a conjecture of Mubayi.
Secondly, we extend the result of Pikhurko-Yilma by showing the equality $h_F(n,q)=t_F(n,q)$ in the range $1\leq q\leq \epsilon_F n$ for any member $F$ in a diverse and abundant graph family (which includes color-critical graphs, disjoint unions of cliques $K_r$, and the Petersen graph).
Lastly, we prove the existence of a graph $F$ for any positive integer $s$ such that $h_F(n,q)=t_F(n,q)$ holds when $1\leq q\leq \epsilon_F n^{1-1/s}$, and $h_F(n,q)<t_F(n,q)$ when $n^{1-1/s}/\epsilon_F\leq q\leq \epsilon_F n$,
indicating that $q=\Theta(n^{1-1/s})$ serves as the threshold for the equality $h_F(n,q)=t_F(n,q)$.
We also discuss some additional remarks and related open problems.
\end{abstract}

\section{Introduction}
Let $F$ be a graph. A graph is {\it $F$-free} if it does not contain $F$ as a subgraph.
The {\it Tur\'{a}n number} $\ex(n,F)$ of $F$ denotes the maximum number of edges in an $n$-vertex $F$-free graph.
An $n$-vertex graph is called an {\it extremal graph for $F$} if it is $F$-free and has the maximum number $\ex(n,F)$ of edges.
In this paper, we study the {\it supersaturation problem for $F$}, that is, to determine the minimum number $h_F(n,q)$ of copies of $F$ in an $n$-vertex graph with $\ex(n,F)+q$ edges.
A related concept is the minimum number $t_F(n,q)$ of copies of $F$ in graphs obtained from an $n$-vertex extremal graph for $F$ by adding $q$ new edges.
It is worth noting that $h_F(n,q) \leq t_F(n,q)$, and extensive research has been conducted in the literature to establish the equality $h_F(n,q) = t_F(n,q)$ under certain circumstances.
This paper presents results on the supersaturation problem that go beyond the existing framework,
showcasing intricate and unexpected relations between $h_F(n,q)$, $q\cdot t_F(n,1)$, and $t_F(n,q)$ in particular.

The celebrated Tur\'{a}n theorem \cite{turan1941} (the case $r=2$ was first proved by Mantel \cite{Man07}) states that any $n$-vertex graph with $t_r(n)+1$ edges contains at least one copy of $K_{r+1}$, where $t_r(n)$ denotes the number of edges in the Tur\'an graph $T_r(n)$, i.e., the complete $r$-partite $n$-vertex graph.
In 1941, Rademacher proved that any $n$-vertex graph with $t_2(n)+1$ edges contains at least $\lfloor n/2\rfloor$ copies of $K_3$.
Stated in the above context, we have the equality $h_{K_3}(n,1)=\lfloor n/2\rfloor= t_{K_3}(n,1)$.
This result is often recognized as the starting point for the study on the supersaturation problem in extremal graph theory.
In subsequent papers \cite{erd55,erd62-1}, Erd\H{o}s extended this by showing that: there exists a constant $\epsilon_3>0$ so that
$$\mbox{$h_{K_3}(n,q)=t_{K_3}(n,q)$ holds for any $1\leq q<\epsilon_{3}n$.}$$
Later Lov\'{a}sz and Simonovits \cite{Lovasz1976} determined the optimal value of $\epsilon_{3}$ as $n\to \infty$, confirming a longstanding conjecture of Erd\H{o}s.
In a subsequent work, Lov\'{a}sz and Simonovits \cite{Lovasz1983} extended their result from the triangle $K_3$ to every clique $K_r$,
establishing the equality $h_{K_r}(n,q)=t_{K_r}(n,q)$ for any $1\leq q<\epsilon_{r}n$ with the best constant $\epsilon_r$.
In fact Lov\'{a}sz and Simonovits \cite{Lovasz1983} completely solved the supersaturation problem for cliques $K_r$ with $r\geq 3$ when $q=o(n^2)$.
The case $q=\Omega(n^2)$ of the supersaturation problem for cliques $K_r$ has also been extensively studied, see \cite{Fisher1989,Fisher1992,razborov2008,nikiforov2011,Reiher2016,LPS2020} and the references therein.

The supersaturation problems were also investigated for general graphs beyond just cliques.
For bipartite graphs,
the captivating conjecture put forth by Erd\H{o}s-Simonovits \cite{Simonovits1984} and Sidorenko \cite{Sidorenko1993} has received significant attention and extensive research efforts. However, in the scope of this paper, we will not delve into a detailed discussion of this conjecture, and instead, we will focus on non-bipartite graphs.
Now let $F$ be a non-bipartite non-clique graph.
By the number of copies of $F$ in a given graph $G$, we mean the number of edge subsets $A\subseteq E(G)$ which induces an copy of $F$.
This also equals the number of edge-preserving injections from $V(F)$ to $V(G)$ divided by $\Aut(F)$, where $\Aut(F)$ denotes the number of automorphisms of $F$.
A graph is {\it color-critical} if it contains an edge whose deletion reduces its chromatic number.
The family of color-critical graphs plays an important role in the development of extremal graph theory.
A classic theorem of Simonovits \cite{Simonovits1968} states that the Tur\'an graph $T_r(n)$ is the unique extremal graph for any color-critical graph $F$ with chromatic number $r+1$ when $n$ is sufficiently large.
In other words, he proved that if $n$ is sufficiently large then any $n$-vertex graph with $t_r(n)+1$ edges contains at least one copy of such $F$.
In a breakthrough paper, Mubayi \cite{mubayi2010} extended Simonovits' theorem using a novel and unified approach for color-critical graphs.
Throughout this paper, for any graph $F$, let $c(n,F)$ be the minimum number of copies of $F$ obtained from an $n$-vertex extremal graph for $F$ by adding one edge.\footnote{Note that for any graph $F$, we have $c(n,F)=t_F(n,1)$ and $t_F(n,q)\geq q\cdot c(n,F)$ for any $q\geq 1$.}
\begin{theorem}[Mubayi \cite{mubayi2010}]\label{Thm:Mubayi}
For every color-critical graph $F$ with chromatic number $r+1$, there exists a constant $\delta=\delta_F>0$ such that if $n$ is sufficiently large and $1\leq q\leq \delta n$, then any $n$-vertex graph with $t_r(n)+q$ edges contains at least $q\cdot c(n,F)$ copies of $F$. That is, $h_F(n,q)\geq q\cdot c(n,F).$
\end{theorem}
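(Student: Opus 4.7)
The plan is to reduce to a near-extremal setting via stability and then run a careful edge-by-edge counting argument. First, I would invoke a stability theorem for color-critical graphs (a consequence of Simonovits' theorem together with a standard supersaturation argument of Erd\H{o}s--Simonovits): if $G$ is an $n$-vertex graph with $t_r(n)+q$ edges, where $q\leq \delta n$, and $G$ contains fewer than $q\cdot c(n,F)$ copies of $F$, then $G$ admits a vertex partition $V(G)=V_1\cup\cdots\cup V_r$ such that the total number of ``bad'' edges (edges inside the parts) plus the number of ``holes'' (missing edges between distinct parts) is $o(n^2)$. By choosing an optimal partition that minimizes this quantity and then locally adjusting vertices that have many bad incidences, I would further ensure that every vertex $v\in V_i$ has at most $\eta n$ bad edges to $V_i$ and at most $\eta n$ holes to $V(G)\setminus V_i$, for a small constant $\eta=\eta(F)$.

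The second step is the core counting. Let $B$ be the set of bad edges and $M$ the set of holes. Counting degrees against the Tur\'an graph structure, one obtains a balance identity of the form $|B|-|M|=q+O(1)$ coming from $e(G)=t_r(n)+q$ and the near-balanced sizes of the parts. Now I would use color-criticality: fix an edge $e^*\in F$ with $\chi(F-e^*)=r$ and a proper $r$-coloring of $F-e^*$. For each bad edge $e=uv\in B$ inside $V_i$, I would count the embeddings $\varphi\colon V(F)\to V(G)$ that send $e^*$ to $e$ and respect the partition (sending color class $j$ into $V_j$), using only edges of the Tur\'an structure plus $e$. Standard counting shows that such embeddings through $e$ exist in quantity $(1+o(1))\cdot \textrm{Aut}(F)\cdot c(n,F)$ provided no other bad edge or hole touches the chosen vertices, and indeed $c(n,F)=\Theta(n^{v(F)-2})$.

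The third step handles the subtractions. An embedding might fail for one of two reasons: it uses a second bad edge (hence gets ``double-counted'' when summing over $B$), or it traverses a hole (hence does not exist). The number of embeddings through $e$ that also use another bad edge is at most $|B|\cdot O(n^{v(F)-4})$, and the number blocked by holes is at most $|M|\cdot O(n^{v(F)-3})$. Combining these, the total count of $F$-copies satisfies
\[
\#F(G)\geq |B|\cdot c(n,F) - O(|M|\cdot n^{v(F)-3}) - O(|B|^2\cdot n^{v(F)-4}).
\]
Since $|B|,|M|\leq \delta n + o(n)$ and $|B|=|M|+q+O(1)$, one derives $\#F(G)\geq (|M|+q)\cdot c(n,F)-O(|M|n^{v(F)-3})$. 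The crucial point is that each hole itself ``costs'' essentially $c(n,F)$ copies, because adding a single edge in a hole would also produce at least $c(n,F)$ copies; a direct bookkeeping then shows the $|M|\cdot c(n,F)$ term dominates the $O(|M|n^{v(F)-3})$ correction, yielding the desired lower bound $q\cdot c(n,F)$.

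The main obstacle, and the place where Mubayi's technique is genuinely subtle, is precisely this last balance argument: showing that holes do not harm us more than bad edges help. When $|M|$ is comparable to $|B|$, a naive bound lets the losses through holes outweigh the gains, so one has to argue that if a hole $xy$ blocks many would-be copies of $F$, then the vertices $x,y$ must be incident to many bad edges, which via a local Zykov-symmetrization or shifting step contradicts the minimality of our chosen partition. Making this argument quantitative and uniform in $q$ (so that the constant $\delta_F$ does not depend on $q$) is the delicate part of the proof.
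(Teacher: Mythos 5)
Your high-level plan — stability to get an $r$-partition, then counting copies of $F$ through each bad edge and subtracting losses through holes and double-counting — is the same skeleton as Mubayi's proof and as the general argument reproduced in Section~\ref{sec_properties-of-H} of this paper (Claims~\rom{1}--\rom{3}, specialized to $k=1$). You have also correctly pinpointed where the argument is delicate. However, there are two genuine gaps.

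First, you never establish the crucial quantitative bound $|B|=O(n)$; stability and the removal lemma only give $|B|=o(n^2)$, and your ``local adjustments'' only yield per-vertex bounds $d_B(v)\leq\eta n$ and $d_M(v)\leq\eta n$, which do not force $|B|$ to be linear. Without $|B|=O(n)$ the correction terms are not under control: the contribution of disjoint hole--bad-edge pairs is $|B|\cdot|M|\cdot O(n^{v(F)-4})$, which already swamps $q\cdot c(n,F)$ once $|B|,|M|$ have order $n^{1+\varepsilon}$. In the paper this is exactly Claim~\rom{2}: if $|B^\ast|>\epsilon_5 n$, then the lower bound $\sum_{e\in B^\ast}F(e)$ by itself exceeds the minimum number of copies of $F$ permitted by a model graph $H(n,r,k,q)$ (see inequality \eqref{low bound for H}). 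This comparison against a specific competitor graph, and the fact that one works with a \emph{minimizer} $H$, is the engine of that step; your proposal argues by contradiction from the hypothesis ``$<q\cdot c(n,F)$ copies'', which is compatible in spirit, but you do not carry out the quantitative comparison that forces $|B|$ to be linear.

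Second, your proposed resolution of the bad-edges-versus-holes balance via ``local Zykov-symmetrization or shifting'' is not the route Mubayi (or this paper) takes, and it is not needed. The actual resolution is purely arithmetic once the right structural quantities are isolated. One sets $\omega=|B|-q$ (for $k=1$ this is $h(n,r,1)-e(H\setminus B^\ast)$ in the notation of Claim~\rom{3}), shows $|M|\leq\omega$, and shows, for each bad edge $e$, that the number of copies of $F$ containing $e$ as the unique bad edge satisfies $F(e)\geq c(n,F)-c\cdot\omega\cdot n^{v(F)-3}$. The key point — which your per-vertex degree bound $\eta n$ misses — is that the loss per bad edge is proportional to $\omega=|M^\ast|\leq|B|-q$, not to $\eta n$. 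Then
\[
\sum_{e\in B}F(e)\;\geq\;(q+\omega)\big(c(n,F)-c\,\omega\,n^{v(F)-3}\big)\;\geq\;q\cdot c(n,F)+\omega\,c(n,F)-2a c\,\omega\,n^{v(F)-3}\;\geq\;q\cdot c(n,F),
\]
where $a=\max\{q,\omega\}\leq\epsilon_5 n$ and the last step uses that $c(n,F)=\Theta(n^{v(F)-2})$ with $\epsilon_5$ small. No symmetrization or minimality-of-partition argument is required; the surplus of bad edges over $q$ (which equals $\omega$) exactly pays for all the hole-blocked copies. Your correction term $O(|M|\cdot n^{v(F)-3})$ has the right order once $|B|=O(n)$ is in hand, so the missing steps are: (i) prove $|B|=O(n)$, and (ii) tie the per-bad-edge loss to $|M|\leq\omega$ and then do the arithmetic above, rather than appealing to a shifting argument.
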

One significant aspect of this result is its utilization of the Graph Removal Lemma (see e.g. \cite{Komlos1996}) and the Erdos-Simonovits Stability Theorem \cite{erdHos1967,erdHos1968,Simonovits1968} to accurately count substructures in graphs.
We point out that provided $1\leq q\leq \delta n$, the lower bound $h_F(n,q)\geq q\cdot c(n,F)$ is sharp for many color-critical graphs $F$ (including cliques, odd cycles, and the graph obtained from $K_4$ by deleting an edge); moreover, it is asymptotically tight for any color-critical graph $F$ due to the following fact:
$$q\cdot c(n,F)\leq t_F(n,q)\leq (1+o(1))q\cdot c(n,F) ~~~ \Longrightarrow ~~~ (1-o(1))t_F(n,q)\leq h_F(n,q)\leq t_F(n,q) \mbox{ for } 1\leq q\leq \delta n.$$

This line of research on color-critical graphs was further enhanced by Pikhurko and Yilma \cite{pikhurko2017}.
Among other results, they proved the following strengthening of Theorem~\ref{Thm:Mubayi}.
\begin{theorem}[Pikhurko and Yilma \cite{pikhurko2017}]\label{Thm:PY}
For every color-critical graph $F$, there exists a constant $\delta=\delta_F>0$ such that if $n$ is sufficiently large $n$ and $1\leq q\leq \delta n$, then $h_F(n,q)=t_F(n,q).$
\end{theorem}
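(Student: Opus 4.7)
The plan is to strengthen Mubayi's lower bound from Theorem~\ref{Thm:Mubayi} to the sharp bound $h_F(n,q)\geq t_F(n,q)$ by a stability reduction followed by local modifications that convert any candidate minimizer into a graph of the form ``$T_r(n)$ plus $q$ inside edges'' without increasing the count of copies of $F$. Here $r+1=\chi(F)$, and by Simonovits's theorem $T_r(n)$ is the unique $n$-vertex extremal graph for $F$ for large $n$, so $t_F(n,q)$ equals the minimum number of copies of $F$ over all configurations obtained from $T_r(n)$ by adding $q$ edges, each of which must lie inside a part. Since $h_F(n,q)\leq t_F(n,q)$ is immediate, it suffices to show that every $n$-vertex graph $G$ with $t_r(n)+q$ edges contains at least $t_F(n,q)$ copies of $F$.

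For the stability reduction, suppose $G$ is a candidate minimizer with fewer than $t_F(n,q)$ copies of $F$. Combining the Erd\H{o}s--Simonovits stability theorem with the removal-lemma counting argument underlying Theorem~\ref{Thm:Mubayi}, if the edit distance from $G$ to $T_r(n)$ exceeds $\epsilon n^2$ for a suitably small $\epsilon=\epsilon_F$, then $G$ already contains $\Omega(n^{|V(F)|})$ copies of $F$, vastly exceeding $t_F(n,q)=O(n^{|V(F)|-2})$ whenever $q\leq\delta n$. So one may assume $G$ is $\epsilon n^2$-close to $T_r(n)$. Fix a vertex partition $V(G)=V_1\cup\cdots\cup V_r$ minimizing the inside-edge set $B$; then $|V_i|=n/r+o(n)$, while the set $M$ of absent crossing edges satisfies $|B|-|M|=q$ and a routine application of min-partition optimality together with $q\leq\delta n$ yields $|B|,|M|=O(n)$.

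The technical core is a swap/symmetrization argument. I would show that (i) replacing an inside edge $uv\in B$ by a missing crossing edge $xy\in M$, and (ii) a Zykov-style relocation that replaces a vertex's outside neighborhood by that of a better-connected vertex in the same part, each weakly decrease the number of copies of $F$. The color-critical hypothesis is essential here: every copy of $F$ in a graph close to $T_r(n)$ must use at least one critical edge inside a part (hence lying in $B$), so a typical inside edge participates in $\Theta(c(n,F))=\Theta(n^{|V(F)|-2})$ copies, whereas any new copy created by inserting $xy$ must additionally use an edge of $B\setminus\{uv\}$, giving a total contribution of at most $|B|\cdot O(n^{|V(F)|-3})$, which is dominated by the cost of removing $uv$ once $\delta$ is chosen small enough to keep $|B|$ a small multiple of $n$. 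Iterating these moves until $M=\emptyset$ leaves a graph whose between-part structure is the complete multipartite graph on $(V_1,\dots,V_r)$ plus exactly $q$ inside edges; a final rebalancing step (again exploiting the color-critical structure, which makes an inside edge in a smaller part no costlier in $F$-copies than one in a larger part) matches this to $T_r(n)$ plus $q$ inside edges, giving the bound $\geq t_F(n,q)$. The principal obstacle will be making the swap-monotonicity estimate uniform and tight across all placements of $B$ and $M$, which requires a careful edge-by-edge accounting of how $F$-copies route through the sparse inside edges in the near-balanced multipartite host; this is the step where the delicate interplay between $c(n,F)$, the part sizes, and the clustering of added edges must be pinned down explicitly.
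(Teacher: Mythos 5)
Your high-level plan --- reduce via stability and the removal lemma to graphs $o(n^2)$-close to $T_r(n)$, eliminate missing crossing edges by a swap argument against inside edges, then rebalance --- is essentially the route the paper takes. The paper does not prove Theorem~\ref{Thm:PY} directly but rather the generalization Theorem~\ref{Thm:adm} for admissible color-$k$-critical graphs; restricting to $k=1$ (so $X=\emptyset$, $H^*=K(V_1,\ldots,V_r)$) recovers the color-critical case. Your swap move (i) is precisely the combination of inequality~\eqref{remove an edge and add an edge} with Claim~\rom{5} in Section~\ref{Section:proof of theorem 1.6}, and the Zykov-style relocation you mention as move (ii) is not actually used anywhere.

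There is, however, a genuine gap in your outline: you assert that ``a routine application of min-partition optimality together with $q\leq\delta n$ yields $|B|,|M|=O(n)$.'' Stability only gives $|B|,|M|\leq\epsilon_4 n^2$, and the max-cut property is a per-vertex degree inequality that does not by itself imply a linear bound on $|B|$. Yet your swap comparison $\#F'(xy)\leq|B|\cdot O(n^{f-3})$ versus $\#F(uv)\geq\Omega(n^{f-2})$ only yields a contradiction once $|B|$ is a sufficiently small multiple of $n$; if $|B|$ were, say, $\Theta(\epsilon n^2)$, the right-hand side would dominate and the swap could \emph{increase} the count. The required bound $|B^\ast|\leq\epsilon_5 n$ is exactly Claim~\rom{2} of Section~\ref{sec_properties-of-H}, and it is one of the harder technical steps in the proof: one splits $B^\ast$ according to whether the quantity $F(e)$ of Definition~\ref{Def:F(e)} is near its expected value, bounds $\mathcal{N}_F(H)\geq\sum_{e\in B_1}F(e)$ against the upper bound $\mathcal{N}_F(H)\leq q\cdot c(n,F)+O(q^2 n^{f-3})$ from the trivial construction, and separately shows the exceptional set $B_2$ of edges with small $F(e)$ is a negligible fraction by a two-stage analysis of how missing edges concentrate. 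Only after this does Claim~\rom{3} supply the uniform lower bound $F(e)\geq f(n,F)-c\omega n^{f-k-2}$ that your swap argument implicitly invokes for \emph{every} $uv\in B$ rather than merely a ``typical'' one. A related imprecision: $|B|-|M|$ equals $q+\omega$ where $\omega=t_r(n)-e(K(V_1,\ldots,V_r))\geq 0$, not $q$; the final rebalancing must eliminate this excess $\omega$, and the paper does so via the quantitative Taylor-expansion estimate of Lemma~\ref{estimate c(n,f)}, not by the heuristic that an inside edge in a smaller part is ``no costlier.''
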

The authors \cite{pikhurko2017} also determined $h_F(n,q)$ asymptotically for any color-critical graph $F$ in the case $q=o(n^2)$,
by reducing to some optimization problems (see Theorems 3.10-3.11 in \cite{pikhurko2017}).
Of particular interest to them is identifying a threshold for when graphs obtained from extremal graphs for $F$ by adding $q$ new edges
are optimal or asymptotically optimal in the range $q=O(n)$ (e.g., equations (3) and (4) in \cite{pikhurko2017}).
We will explore this intriguing question, showing that such thresholds can be rather sophisticated.

To the best of our knowledge, the study of supersaturation problems for non-bipartite graphs, specifically excluding color-critical cases, has only recently been undertaken for the ``bowtie" graph,
which consists of two copies of $K_3$ merged at a vertex, as explored by Kang, Makai and Pikhurko in \cite{Kang2017}.
On the other hand, the powerful approach utilizing the graph removal lemma and the Erdos-Simonovits stability theorem, as introduced in \cite{mubayi2010}, was effectively employed in the proof of the aforementioned Theorem~\ref{Thm:PY} of \cite{pikhurko2017}, and subsequently extended to hypergraph settings in \cite{mubayi2013, mubayi2013-2}.
These results ``suggest that whenever one can obtain stability and exact results for an extremal problem, one can also obtain counting results'', cited from \cite{mubayi2013}.
In an effort to unify this approach, Mubayi \cite{mubayi2013} formulated a conjecture as follows.
An $r$-uniform hypergraph (i.e., an {\it $r$-graph} in short) $F$ is {\bf stable} if $\ex(n,F)$ is achieved by a unique $n$-vertex $r$-graph $H(n)$ for sufficiently large $n$, and every $n$-vertex $F$-free $r$-graph with $(1-o(1))\ex(n,F)$ edges can be obtained from $H(n)$ by changing at most $o(n^r)$ edges.

\begin{conjecture}[Mubayi, Conjecture 5.1 in \cite{mubayi2013}]\label{con-mubayi}
Let $r\geq 2$ and let $F$ be a non $r$-partite stable $r$-graph.
For every positive integer $q$, if $n$ is sufficiently large, then
$h_F(n,q)\geq q\cdot c(n,F)$.\footnote{Here, these definitions for $r$-graphs $F$ are analogously defined.}
\end{conjecture}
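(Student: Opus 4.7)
The plan is to mirror Mubayi's proof of Theorem~\ref{Thm:Mubayi} in the hypergraph setting, replacing the Erd\H{o}s--Simonovits stability theorem with the given stability hypothesis on $F$. For an $n$-vertex $r$-graph $G$ with $\ex(n,F) + q$ edges, I would set up the standard dichotomy: either $G$ is $\epsilon n^r$-far from the unique extremal $r$-graph $H(n)$, or it is $\epsilon n^r$-close.

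In the first case, the hypergraph removal lemma combined with the stability hypothesis (as in \cite{mubayi2013, mubayi2013-2}) produces $\Omega(n^{v(F)})$ copies of $F$, which for fixed $q$ vastly exceeds $q\cdot c(n,F)$. So the essential work lies in the second case. Writing $G=(H(n)\setminus D)\cup A$ with $|A|-|D|=q$, the goal is to show that each added edge of $A$ lies in at least $c(n,F) - o(n^{v(F)-r})$ copies of $F$ (by applying the definition of $c(n,F)$ locally after accounting for $D$) and that the copies activated by distinct added edges are essentially disjoint. Summing would then give at least $(|A|-|D|)\cdot c(n,F) = q\cdot c(n,F)$.

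The crux, and the place where the argument is expected to break, is the essentially-disjoint claim. For a color-critical $F$ the critical edge of each newly created copy is uniquely one of the added edges, so no copy is double-counted. For a general stable non-$r$-partite $F$, however, two edges $e,e'\in A$ placed in a ``compatible'' position may jointly belong to many copies of $F$, so each such copy is counted once rather than twice in the naive sum. The stability hypothesis alone does not rule out this pairwise overlap, and it is precisely this slack that should produce a counterexample to the conjecture.

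Accordingly, and consistent with the refutation announced in the abstract, my plan pivots to constructing such a counterexample in the graph case ($r=2$). I would search for a stable non-bipartite $F$ whose extremal structure admits two or more edges playing symmetric ``Simonovits-critical'' roles, then place $q\geq 4$ added edges in a tight cluster inside one part of the Tur\'an graph so that every pair of them is jointly contained in $F$-copies that would otherwise be counted twice. Any strictly positive pairwise overlap of this sort, when summed over the $\binom{q}{2}$ pairs, forces $h_F(n,q) < q\cdot c(n,F)$. Verifying the stability of the candidate $F$ and making the overlap count precise would then complete the refutation.
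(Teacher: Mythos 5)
Your proposed mechanism cannot produce a counterexample, and it misidentifies where the argument breaks. If $G$ is obtained by adding a set $E$ of $q$ new edges to the unique extremal graph $H(n)$ (so $D=\emptyset$ in your decomposition), then for each $e\in E$ the copies of $F$ in $G$ whose only edge from $E$ is $e$ are exactly the copies of $F$ in $H(n)+e$; these families are pairwise disjoint and each has size $\mathcal{N}_F(H(n)+e)\geq c(n,F)$ by the very definition of $c(n,F)$. Hence $\mathcal{N}_F(G)\geq q\cdot c(n,F)$ no matter how tightly $E$ is clustered — pairwise overlaps (copies of $F$ using two or more edges of $E$) only \emph{add} to this count, they never subtract. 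So ``placing added edges so each pair is jointly in many $F$-copies'' gives more copies of $F$, not fewer. The step that genuinely fails is the other one in your sketch: the per-edge bound $F(e)\geq c(n,F)-o(n^{v(F)-r})$ once $D\neq\emptyset$. Deleting well-chosen edges of $H(n)$ can knock that per-edge count down by a \emph{constant} multiplicative factor, and this is what Conjecture~\ref{con-mubayi} fails to anticipate.

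That is exactly the paper's construction in Section~\ref{Sec:example}. For $F=M_k+(P_4\cup M_{k-2})$ with unique extremal graph $H(n,3,k)=K_{k-1}+T_3(n-k+1)$, they delete the whole top clique $K_{k-1}$ (freeing $\binom{k-1}{2}$ edges), insert a star $S_t$ with $t-1=q+\binom{k}{2}$ edges into the largest part, and delete the $k-1$ edges from the star's center to the top, leaving exactly $\ex(n,F)+q$ edges. Because the top clique and center--top edges are gone, each leaf edge of the star lies in only $\frac{2(2k-1)}{5k-2}<1$ times $c(n,F)$ copies of $F$ (up to lower-order terms), a fixed loss rather than an $o(1)$ one. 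Summing over the $q+\binom{k}{2}$ leaf edges gives fewer than $q\cdot c(n,F)$ copies precisely when $q>(k-1)(2k-1)$, which for $k=2$ is $q\geq4$. In short: the refutation comes from trading away high-value structure of the extremal graph to cheapen each extra edge, not from any overlap among the added edges.
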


In this paper, we investigate supersaturation problems beyond color-critical graphs while exploring the corresponding natural enumerative parameters.
Our first result refutes Conjecture~\ref{con-mubayi} in the graph case by providing a counterexample for every integer $q\geq 4$, in the following strong form.

\begin{theorem}\label{main-example}
There exists a non-bipartite stable graph $F$ such that the following holds.
There exist a small constant $\delta=\delta_F>0$ and an integer $n_0=n_0(F)$ such that for any integers $n \geq n_0$ and $4 \leq q \leq \delta n$,
it holds that $$\frac{h_F(n,q)}{q \cdot c(n,F)}\leq 1-\delta.$$
\end{theorem}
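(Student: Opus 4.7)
The plan is to construct an explicit non-bipartite stable graph $F$ together with a family of $n$-vertex graphs $G^{\ast}_{n,q}$, each with exactly $\ex(n,F)+q$ edges, and then verify directly that $G^{\ast}_{n,q}$ contains fewer than $(1-\delta)\,q\cdot c(n,F)$ copies of $F$ whenever $4\le q\le \delta n$.

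A preliminary observation rules out the obvious candidate. Let $T_n$ be an $n$-vertex extremal $F$-free graph and $e_1,\dots,e_q$ any $q$ new edges; then every copy of $F$ in $T_n+e_i$ must use $e_i$ because $T_n$ is $F$-free, so the $F$-copies contributed by distinct $e_i$'s occupy disjoint edge-sets. Summing yields $t_F(n,q)\ge q\cdot c(n,F)$, which means any counterexample must come from a graph that is not of the form $T_n$ plus $q$ edges. Thus I would look for a genuinely different near-extremal structure in which several new edges are shared among the same $F$-copies.

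Accordingly, I would build $G^{\ast}_{n,q}$ as a local perturbation of $T_n$: delete a constant number $k$ of carefully chosen edges from $T_n$ (which creates no new $F$-copies, since $T_n$ is $F$-free) and insert $q+k$ new edges in a configuration that forces a constant fraction of the resulting $F$-copies to use at least two of the inserted edges simultaneously. The graph $F$ itself should be engineered to enable this sharing; a natural candidate is a graph built by gluing two color-critical subgraphs along a common vertex set in a way that destroys individual color-criticality but still allows $F$-copies in near-extremal hosts to arise only when several perturbation edges cooperate. The threshold $q\ge 4$ in the statement plausibly reflects the minimum number of perturbed edges needed to bring such a sharing structure into existence.

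The main obstacle is twofold. On the structural side, one must produce an $F$ for which stability can be verified (likely via a tailored Erd\H{o}s--Simonovits stability argument for the specific $F$) and for which $c(n,F)$ can be computed exactly, since the target inequality is a precise comparison with a constant factor strictly less than $1$. On the enumerative side, one must bound the number of copies of $F$ in $G^{\ast}_{n,q}$ from above, accounting for copies that use exactly one inserted edge as well as copies that use several inserted edges cooperatively, and derive the uniform bound $h_F(n,q)\le (1-\delta)\,q\cdot c(n,F)$ throughout the prescribed range $4\le q\le \delta n$. Ensuring that the constant-factor saving does not decay to zero as $q$ or $n$ grows is where the most delicate part of the argument lies.
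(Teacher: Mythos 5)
Your preliminary observation that $t_F(n,q)\ge q\cdot c(n,F)$, and hence that any counterexample must come from a graph that is not an extremal graph plus $q$ new edges, is correct and is indeed the conceptual starting point (the paper records this in a footnote). Your high-level plan --- delete a few edges from the extremal graph and insert slightly more than $q$ new ones --- also matches the paper's construction at the level of bookkeeping: they pass from $H(n,3,k)=K_{k-1}+T_3(n-k+1)$ to $I_{k-1}+T_3(n-k+1)$ (deleting $\binom{k-1}{2}$ edges), then insert a star $S_t$ with $t-1=q+\binom{k}{2}$ edges into one part while also deleting the $k-1$ edges between the star's center and the top set.

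Where your proposal goes wrong is in the claimed \emph{mechanism}. You say the design should ``force a constant fraction of the resulting $F$-copies to use at least two of the inserted edges simultaneously.'' In the paper's construction this is precisely the contribution that is \emph{discarded} as negligible: with $F$ on $f=4k$ vertices, $F$-copies using $w\ge 2$ star edges number only $O_F(q^2 n^{f-k-2})=o\bigl(q\,c(n,F)\bigr)$ in the range $q\le\delta n$, since $c(n,F)=\Theta(n^{f-k-1})$. The genuine savings instead come from the \emph{deletions}: because the $k-1$ top vertices now form an independent set and the star's center has no edge to any of them, each individual inserted star edge lies in strictly fewer copies of $F$ than an arbitrary new edge added to the untouched $H(n,r,k)$ would (the per-edge coefficient drops from $\tfrac{(k-1)(5k-2)}{2}$ to $(k-1)(2k-1)$). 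One then checks that this per-edge reduction outweighs the fact that $q+\binom{k}{2}$ rather than $q$ edges were inserted, which is exactly where the threshold $q\ge 4$ (for $k=2$, with $F=M_2+P_4$) comes from. Sharing of inserted edges, as you propose, would if anything tend to \emph{increase} the count relative to the per-edge savings, and no amount of clever gluing of color-critical pieces would fix that; the mechanism needs to be an embedding-restriction phenomenon, not an edge-overlap phenomenon. Relatedly, the graph $F$ is not a gluing of two color-critical graphs along a vertex set but a \emph{join} $F=M_k+(P_4\cup M_{k-2})$, chosen precisely so that the embedding structure factors cleanly as in equation~\eqref{equ:count F} of the paper and the per-edge coefficients can be computed exactly. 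Your concerns about verifying stability and computing $c(n,F)$ are legitimate, but both turn out to be straightforward once $F$ is chosen to be a non-bipartite color-$k$-critical graph (stability is Lemma~\ref{lem:color-k-critical-is-stable}, and $c(n,F)$ is given by~\eqref{eq1-for-counter-example}).
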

\noindent The proof of this result actually yields infinitely many counterexamples $F$ with arbitrary chromatic number at least four to Conjecture~\ref{con-mubayi}. Additionally, since $t_F(n,q)\geq q\cdot c(n,F)$, this implies that for such $F$,
$$\mbox{$h_F(n,q)<t_F(n,q)$ holds for any fixed $q\geq 4$ and sufficiently large $n$.}$$
To the best of our knowledge, these examples represent the first instances with the above property for general graphs.
We will discuss more about related problems in the concluding remarks.

Our second main result extends Theorem~\ref{Thm:PY} to a diverse and abundant family of graphs.
The precise definition of this family requires the introduction of some technical notations, which we will defer until Definition~\ref{def 3}.
We mention here that this family includes color-critical graphs, Kneser graphs $K(t,2)$, disjoint unions of cliques $K_r$, and many others (see the remarks following Definition~\ref{def 3}).
In the subsequent statement, we focus solely on the Kneser graphs $K(t,2)$,
which are the graphs with the vertex set $\binom{[t]}{2}$ where two vertices $A$ and $B$ in $\binom{[t]}{2}$ are adjacent if and only if $A\cap B=\emptyset$;
we refer to Subsection~\ref{Subs:Kneser} for a detailed discussion on extremal results concerning the Kneser graphs $K(t,2)$.

\begin{theorem}\label{Thm:Knes}
For any Kneser graph $K=K(t,2)$ with $t\geq 5$, there exists a constant $\delta>0$ such that for any sufficiently large integer $n$ and any integer $1\leq q\leq \delta n$, we have $h_K(n,q)=t_K(n,q).$
\end{theorem}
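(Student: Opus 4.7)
The plan is to derive Theorem~\ref{Thm:Knes} as a corollary of the general supersaturation theorem that the paper establishes for the graph family introduced in Definition~\ref{def 3}. Thus the task reduces to verifying that the Kneser graph $K(t,2)$ belongs to this family for every $t \ge 5$, and extracting the values of $\ex(n,K(t,2))$ and of $c(n,K(t,2))$ from the Kneser-specific analysis of Subsection~\ref{Subs:Kneser}.

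First, I would pin down the extremal structure for $K=K(t,2)$. By the Lov\'asz–Kneser theorem $\chi(K)=t-2$, so $\ex(n,K)=(1-1/(t-3)+o(1))\binom{n}{2}$. Crucially $K$ is \emph{not} color-critical: by edge-transitivity, the chromatic number of $K-e$ is independent of $e$, and a direct check (each edge lies outside many short odd cycles for $t=5$, and outside appropriate odd-cycle-covers for $t\ge 6$) gives $\chi(K-e)=\chi(K)$. Consequently, the extremal graphs cannot be the Tur\'an graph $T_{t-3}(n)$ alone; rather they are obtained from $T_{t-3}(n)$ by pasting a specific, bounded-size graph $H_t$ into one part, where $H_t$ is essentially the densest subgraph whose presence still forbids $K$. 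The discussion in Subsection~\ref{Subs:Kneser} will supply $\ex(n,K)$ exactly, the (essentially unique) extremal configuration, and the value of $c(n,K)$.

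Second, I would follow the stability-plus-counting strategy of Mubayi \cite{mubayi2010} and Pikhurko–Yilma \cite{pikhurko2017}. Let $G$ be an $n$-vertex graph with $\ex(n,K)+q$ edges and $q\le \delta n$. The graph removal lemma together with the Erd\H{o}s–Simonovits stability theorem (applied at chromatic number $t-2$) forces $G$ to be $o(n^2)$-close to an extremal graph $T_{t-3}(n)+H_t$. A clean-up step (moving misplaced edges across parts, handling ``bad'' low-degree vertices) reduces the analysis to the case where $G$ is obtained from a fixed extremal graph by adding exactly $q$ new edges; the loss incurred can be absorbed by lower bounding each added edge's contribution. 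I would then check that $K(t,2)$ satisfies every hypothesis of Definition~\ref{def 3}: the quantitative stability input, the symmetry hypothesis supplied by the vertex- and edge-transitivity of $K(t,2)$, and the local counting hypothesis asserting that no configuration of few added edges is more efficient than the canonical one.

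The main obstacle is the local counting hypothesis. Because $K$ is not color-critical, a single extra edge $e$ added inside one part of $T_{t-3}(n)+H_t$ does \emph{not} always create a copy of $K$ on its own — new copies arise only when $e$ combines with edges of the ``seed'' graph $H_t$ (or with previously added edges). One must therefore show that, over all ways of distributing $q$ extra edges across the parts, the resulting family of newly created $K$-copies is minimized by the canonical configuration that defines $t_K(n,q)$, without any ``savings'' coming from clever overlaps. I expect this to reduce, via the transitivity of $\Aut(K(t,2))$ on edges and its rich action on small subgraphs, to a finite optimization problem over perturbations of the extremal graph; this is precisely the type of verification Definition~\ref{def 3} is designed to formalize, and it is the step where the specific combinatorics of Kneser graphs enters decisively.
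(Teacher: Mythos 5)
Your high-level plan — derive Theorem~\ref{Thm:Knes} as a corollary of the paper's general supersaturation theorem for the graph family of Definition~\ref{def 3}, after checking that $K(t,2)$ lies in that family — does match the paper's strategy. But two of the substantive details you sketch are wrong, and they are not cosmetic.

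First, the extremal structure. You write that the extremal graphs are obtained from $T_{t-3}(n)$ "by pasting a specific, bounded-size graph $H_t$ into one part." In fact the paper proves (Lemma~\ref{Lem:K(n,2)}) that $K(t,2)$ is \emph{color-$3$-critical}: there exist three edges (coming from the vertex class $V_4 = \binom{[4]}{2}$) whose removal drops $\chi$ from $t-2$ to $t-3$, and deleting any two vertices leaves $\chi$ unchanged — the latter via Schrijver's theorem on $\pi$-stable $2$-subsets. By Simonovits' theorem (Theorem~\ref{extremal graph for mk-critical graph}) the unique extremal graph for any non-bipartite color-$k$-critical $F$ with $\chi(F)=r+1$ is $H(n,r,k)=K_{k-1}+T_r(n-k+1)$. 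Here $k=3$, $r=t-3$, so the extremal graph is $K_2 + T_{t-3}(n-2)$: two dominating vertices joined to each other and to the entire Tur\'an graph. This is not a small gadget sitting \emph{inside} one part; the two extra vertices sit \emph{above} all parts. In particular, an added edge inside a part does always create copies of $K$ (by combining with the two dominating vertices and a $k$-matching through them), contrary to your worry that "a single extra edge does not always create a copy of $K$ on its own."

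Second, Definition~\ref{def 3} is not a package of "quantitative stability + symmetry from $\Aut$ + a finite local optimization." It is a purely structural condition on $F$: for every embedding type $\mathcal{F}=(F_0,F_1,\ldots,F_r)$ (a way of embedding $F$ into $K_{k-1}+K(V_1,\ldots,V_r)$ plus bad/top edges), the bottom $\bigcup_{i\ge 1}F_i$ must have matching number $\ge k-|V(F_0)|$, and $\ge k+1-|V(F_0)|$ when $F_0$ contains an edge. Verifying this for $K(t,2)$ (Lemma~\ref{Lem2:K(n,2)}) uses the fact, again via Schrijver, that every critical $3$-subset of $K(t,2)$ is an independent set, and then checks the matching inequalities case-by-case on $|V(F_0)|$. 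Edge- or vertex-transitivity of $\Aut(K(t,2))$ plays no role in the definition or its verification. So while your "derive-as-corollary" framing is correct and the reliance on removal-lemma-plus-stability is rightly anticipated, the concrete hypotheses you propose to verify do not match the ones that actually need verifying, and a proof along the lines you describe would not go through without first correcting both the extremal picture and the nature of the admissibility condition.
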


\noindent A notable case is the Petersen graph $\mathbf{P}$, which corresponds to the Kneser graph $K(5,2)$.
As a prompt corollary, one can deduce from Theorem~\ref{Thm:Knes} and an old result of Simonovits on $\ex(n,\mathbf{P})$ \cite{Simonovits1974} that for sufficiently large $n$,
$$h_{\mathbf{P}}(n,1)=c(n,\mathbf{P})=96{\lceil\frac{n}{2}\rceil-3 \choose 2}{ \lfloor\frac{n}{2}\rfloor-1 \choose 4}\approx \frac{n^6}{32}.$$
Our proof, similar to \cite{mubayi2010,pikhurko2017}, employs the graph removal lemma and the Erdos-Simonovits stability theorem as the main tools, while also requiring novel techniques for counting substructures in various scenarios.
The full statement of this result can be found in Theorem~\ref{Thm:adm}.

Our final result explores the thresholds for the equality $h_F(n,q)=t_F(n,q)$ to hold as $q$ varies as a function of $n$ for graphs $F$.
As noted previously, this question was examined in \cite{pikhurko2017} for color-critical graphs.
The following result indicates that for any positive integer $s$, this threshold can be achieved with $q=\Theta(n^{1-1/s})$ for some non-bipartite stable graph $F$.

\begin{theorem}\label{Thm:hVSt}
For any positive integer $s$,
there exists a non-bipartite stable graph $F$ such that the following holds.
There is a constant $\epsilon>0$ such that for every sufficiently large integer $n$,
\begin{itemize}
\item[(1)] if $1\leq q \le \epsilon n^{1-1/s}$, then $h_F(n,q)=t_F(n,q)$, and
\item[(2)] if $n^{1-1/s}/\epsilon \le q\le \epsilon n$, then $h_F(n,q)<t_F(n,q)$.
\end{itemize}
\end{theorem}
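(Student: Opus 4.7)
The plan is to exhibit, for each integer $s \geq 1$, an explicit non-bipartite stable graph $F = F_s$ together with a witness construction establishing both regimes. A natural candidate is obtained by augmenting a color-critical base graph (such as $K_{r+1}$) with an $s$-dependent gadget, designed so that any copy of $F_s$ which passes through $s$ newly added edges arranged in a prescribed \emph{tight cluster} is heavily overcounted: a cluster of $s$ added edges then generates asymptotically fewer copies of $F_s$ than $s$ independently added edges. The threshold $q = \Theta(n^{1-1/s})$ arises from a standard double count: among $q$ edges on $n$ vertices, the number of $s$-tuples of added edges forming such a tight cluster scales like $q^s / n^{s-1}$, which reaches $\Theta(1)$ exactly when $q = \Theta(n^{1-1/s})$.

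For part (2), I would start from an extremal graph $H$ for $F_s$ and build a competitor $G$ with $\ex(n, F_s) + q$ edges by removing $m$ carefully chosen edges of $H$ and inserting $q + m$ edges organized into many overlapping $s$-clusters. Writing the number of copies of $F_s$ in $G$ as a sum indexed by how many cluster edges each copy uses, the single-edge contribution is roughly $(q+m)\cdot c(n,F_s)$, while the $s$-cluster contribution introduces a correction of order $(\text{number of clusters})\cdot n^{v(F_s)-O(1)}$ with the opposite sign. Tuning $m$ and the cluster arrangement yields a total strictly below $t_{F_s}(n, q)$ whenever $q \geq n^{1-1/s}/\epsilon$.

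For part (1), I would follow the removal-lemma-plus-stability framework of Mubayi and Pikhurko--Yilma used in the proofs of Theorems~\ref{Thm:Mubayi} and~\ref{Thm:PY}. The graph removal lemma together with Erd\H{o}s--Simonovits stability forces any near-minimizer $G$ with $\ex(n,F_s)+q$ edges to be obtainable from an extremal graph $H$ by editing only $O(q)$ edges. A careful accounting of the resulting copy count shows that the potential saving over $t_{F_s}(n, q)$ is bounded above by a constant multiple of $q^s / n^{s-1}$ times a factor determined by $F_s$. Since this saving must overcome a baseline of order $q$ to beat $t_{F_s}(n, q)$, no edit pattern can do so once $q \leq \epsilon n^{1-1/s}$, giving $h_{F_s}(n,q) \ge t_{F_s}(n,q)$; the reverse inequality is immediate.

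The main obstacle will be calibrating $F_s$ so that the savings available by editing the extremal graph are captured precisely by $s$-clusters and by no other configurations, and then quantifying those savings sharply enough to track the transition at $q = \Theta(n^{1-1/s})$. In particular, one must rule out mixed strategies in which a linear number of edges of $H$ are removed to pay for additional clustered inserts, which requires going beyond a qualitative stability statement and performing a delicate combinatorial optimization over edit patterns of the extremal graph.
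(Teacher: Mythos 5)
Your proposal correctly identifies the stability-plus-removal-lemma framework for the regime $q \le \epsilon n^{1-1/s}$, and you rightly anticipate that the hard work lies in an explicit construction of $F$ and a ``delicate combinatorial optimization over edit patterns.'' However, the mechanism you describe for the threshold does not match the one actually at play, and there are gaps that prevent your plan from closing.

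First, the threshold calculation is inconsistent as stated. You write that the potential saving scales like $q^s/n^{s-1}$ and ``must overcome a baseline of order $q$''; comparing $q^s/n^{s-1}$ with $q$ gives a crossover at $q = \Theta(n)$, not $q = \Theta(n^{1-1/s})$. The correct comparison, which the paper performs, is between a \emph{constant-order} penalty and the $q^s$-scaling gain: the competitor graph trades the $\binom{k-1}{2}$ edges inside the clique $K_{k-1}$ of the extremal graph $H(n,r,k)=K_{k-1}+T_r(n-k+1)$ for the same number of extra star edges. Those replacement star edges cost $\Theta(n^{f-k-1})$ copies of $F$ (a constant multiple of $c(n,F)$), while deleting the clique edges avoids $\Theta(q^s\,n^{f-k-s})$ copies. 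Setting $q^s\,n^{f-k-s}\asymp n^{f-k-1}$ gives $q\asymp n^{1-1/s}$.

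Second, your picture of ``tight clusters'' appearing among $q$ spread-out edges once $q^s/n^{s-1} \gtrsim 1$ is not how the paper's competitor works. The competitor $H'(L^\ast)$ deliberately concentrates all added edges in a single large star, so the number of $s$-subsets of its leaves is $\Theta(q^s)$ regardless of whether $q$ has passed any collision-type threshold. The threshold is a cost--benefit analysis of a deterministic construction, not a counting-clusters-in-random-placement statement.

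Third, and most substantively, you give no explicit $F$ and no parameters to calibrate the gadget. The paper introduces, for a color-$k$-critical $F$ with $\chi(F)=r+1$, the parameter $s(F)$ (the minimum over non-stable critical subsets $A$ of the least number of edges that a vertex of $A$ must send into a color class of $F\setminus A$) and $t(F)$ (the analogous quantity for stable critical subsets), and then constructs $F$ from $M_k + k\cdot S_{s+1}$ by adding one edge between two star centers, yielding $s(F)=s$, $t(F)\ge 4$. The reason $s(F)$ controls the threshold is exactly that any copy of $F$ in the competitor using an edge inside $X\cup C$ must send at least $s(F)$ edges into the local star; this is what produces the $q^s$ scaling and what is deleted when you pass from $K_{k-1}$ to $I_{k-1}$. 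Your proposal does not identify a discrete invariant of this kind, and without it there is no principled way to ``calibrate $F_s$ so that the savings are captured precisely by $s$-clusters,'' which you correctly flag as the crux. Finally, part (1) also needs the quantitative structure results (that any minimizer sits within $O(\epsilon n)$ bad and missing edges of $H(n,r,k)$, with $k-1$ high-degree vertices) supplied by the paper's Claims \rom{1}--\rom{3}; a generic ``$O(q)$ edits'' statement as you propose is too weak, since the argument must also control the quantity $\omega$ (the excess number of bad edges over $q$) and compare per-$\omega$ penalties against per-cluster gains, which is what finally rules out the mixed strategies you worry about.
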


The organization of this paper is as follows:
In Section~\ref{Sec:pre}, we provide preliminaries, including notations, key lemmas, and the definition of a graph family that plays a crucial role throughout this paper.
Section~\ref{Sec:example} presents an explicit example to prove Theorem~\ref{main-example} and refute Conjecture~\ref{con-mubayi}.
In Section~\ref{sec_properties-of-H}, we establish quantitative and structural properties for graphs with the minimum number of copies of $F$, which are essential for the subsequent sections.
Section~\ref{Section:proof of theorem 1.6} introduces a special family of graphs and demonstrates that for any graph $F$ in this family, the equality $h_F(n,q)=t_F(n,q)$ holds for $1\leq q\leq \epsilon_F n$ and sufficiently large $n$, implying Theorem~\ref{Thm:Knes}.
In Section~\ref{section s and t}, we complete the proof of Theorem~\ref{Thm:hVSt}.
Finally, in the concluding section, we provide several remarks and discuss related problems.

\section{Preliminaries}\label{Sec:pre}
\subsection{Notations}
Let $G$ be a given graph.
The {\it neighborhood} of a vertex $u$ in $G$ is denoted by $N_G(u)=\{v\in V(G): uv\in E(G)\}$.
By $N_G[u]$ we denote the set $N_G(u)\cup\{u\}$.
The {\it degree} $d_{G}(u)$ of the vertex $u$ in $G$ is the size of $N_G(u)$.
For an edge subset $A\subseteq E(G)$, we use $d_A(u)$ to denote the number of edges in $A$ incident with $u$.
For a vertex subset $X\subseteq V(G)$, let $N_X(u)=X\cap N_G(u)$ and $d_X(u)=|N_X(u)|$.
We use $N_G(X)$ and $N_G[X]$ to denote $\left(\bigcup_{u\in X}N_G(u)\right)\setminus X$  and $\bigcup_{u\in X}N_G[u]$, respectively.
We also write $e_G(X)$ to express the number of edges contained in the induced subgraph $G[X]$.
We say $X$ is {\it stable} if there is no edges of $G$ contained in $X$.
We often drop the subscript when the graph $G$ is clear from the context.
For a subset $S$ of vertices or edges, let $G-S$ or $G\setminus S$ be the graph obtained from $G$ by deleting every element in $S$.
Denote by $\overline{G}$ the complement graph of $G$.

Let $G$ and $H$ be graphs and $k$ be a positive integer.
Denote by $G\cup H$ the vertex-disjoint union of $G$ and $H$ and by $k\cdot G$ the vertex-disjoint union of $k$ copies of a graph $G$.
Let $G+ H$ be obtained from $G\cup H$ by adding all possible edges between $V(G)$ and $V(H)$.
For graphs $H_1,\ldots,H_k$, it is connivent to use $H_1+\ldots+H_k$ to express the graph $(H_1+\ldots+H_{k-1})+H_k$.
For a set $X$ of vertices, by $K[X]$ we mean the complete graph with the vertex set $X$.
Let $K(V_1,\ldots,V_r)$ denote the complete $r$-partite graph with parts $V_1,\ldots,V_r$.
For a graph $F$, we denote the number of copies of $F$ in a graph $G$ as $\mathcal{N}_F(G)$ (sometimes also written as $\#F(G)$).

We denote the independent set on $k$ vertices by $I_k$,  the star on $k$ vertices by $S_{k}$, the path on $k$ vertices by $P_k$, and the matching of $k$ edges by $M_k$.
For two functions $f, g:\mathbb{N}^+\to \mathbb{R}^+$,
by $f=\Omega(g)$ we mean $f\geq c \cdot g$ for a sufficiently large constant $c$,
by $f=O(g)$ we mean $f\leq d\cdot g$ for a fixed constant $d>0$,
and by $f=\Theta(g)$ we mean that $c_1\cdot g\leq f\leq c_2 \cdot g$ for fixed constants $c_2>c_1>0$.
Throughout this paper, we write $[k]$ for the set $\{1,2,\ldots,k\}$.

\subsection{Extremal results}
We introduce some classic theorems and useful lemmas needed in the following proofs.
As we discussed in the introduction, 
the Graph Removal Lemma (see e.g., Theorem 2.9 in \cite{Komlos1996}) and the Erd\H{o}s-Simonovits Stability Theorem are key to the proofs (of Theorems~\ref{Thm:Knes} and \ref{Thm:hVSt}).

\begin{theorem}[Graph Removal Lemma \cite{Komlos1996}]\label{removal lemma} Let $F$ be a graph with $f$ vertices. Then for every $\delta>0$ there is $\epsilon>0$ such that every graph with $n\geq 1/\epsilon$ vertices and at most $\epsilon n^f$ copies of $F$ can be made $F$-free by removing at most $\delta n^2$ edges.
\end{theorem}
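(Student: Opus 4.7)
The plan is to prove the Graph Removal Lemma via the standard regularity-plus-counting argument. The key ingredients are Szemer\'edi's Regularity Lemma, which guarantees a partition of $V(G)$ into a bounded number of parts almost all of whose pairs are $\varepsilon$-regular, together with a Counting (or Embedding) Lemma, which says that in a system of $\varepsilon$-regular pairs of sufficiently large density, every fixed graph $F$ embeds in the expected number of places (up to lower-order factors). Given $\delta>0$, I will choose auxiliary constants $d$ and $\varepsilon_0$ much smaller than $\delta$, depending on $f=|V(F)|$, and then apply the Regularity Lemma with regularity parameter $\varepsilon_0$ to get a partition $V(G)=V_1\cup\cdots\cup V_k$ with $1/\varepsilon_0\le k\le M(\varepsilon_0)$ and $|V_i|\approx n/k$.

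Next I would perform the usual three-step cleanup on $G$ to form a subgraph $G'$: delete all edges inside the parts $V_i$ (at most $k\binom{n/k}{2}\le n^2/(2k)$ edges), all edges between $\varepsilon_0$-irregular pairs (at most $\varepsilon_0 \binom{k}{2}(n/k)^2\le \varepsilon_0 n^2/2$ edges), and all edges between regular pairs of density less than $d$ (at most $d\binom{k}{2}(n/k)^2\le dn^2/2$ edges). Choosing $1/k,\varepsilon_0,d$ each at most $\delta/3$, the total number of edges removed is at most $\delta n^2$, as required for the conclusion of the lemma.

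The heart of the argument is the Counting Lemma, which I would state and use as follows: if $W_1,\dots,W_f$ are (not necessarily distinct) parts of the partition and the graph $G'$ restricted to every pair $(W_i,W_j)$ with $ij\in E(F)$ is $\varepsilon_0$-regular of density at least $d$, then the number of copies of $F$ in $G'$ with the $i$-th vertex lying in $W_i$ is at least $\tfrac{1}{2}d^{e(F)}(n/k)^f$ (provided $\varepsilon_0\ll d$). I would then argue by contradiction: if $G'$ still contained a copy of $F$ after the cleanup, it must use some tuple of parts $(W_1,\dots,W_f)$ for which every required pair is regular with density at least $d$, and then the Counting Lemma produces at least $\tfrac{1}{2}d^{e(F)}(n/k)^f$ copies of $F$ in $G'\subseteq G$. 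Setting $\varepsilon := \tfrac{1}{2}d^{e(F)}/k^f$ (which depends only on $\delta$ and $F$) makes this quantity exceed $\varepsilon n^f$, contradicting the hypothesis. Hence $G'$ is $F$-free, and $G$ can be made $F$-free by deleting at most $\delta n^2$ edges.

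The main obstacle, as usual in the regularity method, is the Counting Lemma itself: one must iteratively pick vertices for each $v\in V(F)$ in a ``typical'' neighborhood, verifying at each step that the relevant slice is still large and regular enough for the next pair to be used. This is handled by the standard ``slicing lemma'' for regular pairs, which says that if $(A,B)$ is $\varepsilon_0$-regular of density $\ge d$ and $A'\subseteq A$ has $|A'|\ge \varepsilon_0|A|$, then $(A',B)$ is $2\varepsilon_0$-regular of density $\ge d-\varepsilon_0$. The quantitative dependence is exactly what forces $\varepsilon$ to be tower-type in $1/\delta$ (via $M(\varepsilon_0)$), but nothing finer is needed since the statement only asserts the existence of some $\varepsilon=\varepsilon(\delta,F)>0$.
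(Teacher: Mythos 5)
Your argument is the standard regularity-plus-counting proof of the Removal Lemma, and it is correct; the paper itself does not prove this statement but quotes it from the cited survey of Koml\'os and Simonovits, where essentially the same proof (Szemer\'edi's Regularity Lemma, the three-step cleanup, and the Counting Lemma) is given. The only point to tidy is the final choice of $\varepsilon$: since the number of parts $k$ depends on the particular graph $G$, you should set $\varepsilon:=\tfrac{1}{2}d^{e(F)}/M(\varepsilon_0)^f$ using the uniform upper bound $k\le M(\varepsilon_0)$, so that $\varepsilon$ depends only on $\delta$ and $F$ (as your parenthetical remark already indicates), and note that the Counting Lemma step with possibly coinciding parts is harmless because all edges inside parts were deleted, so adjacent vertices of $F$ land in distinct parts.
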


\begin{theorem}[Erd\H{o}s-Simonovits Stability Theorem \cite{erdHos1967,erdHos1968,Simonovits1968})]\label{stability theorem}
Let $r\geq2$ and $F$ be a graph with chromatic number $r+1$.
Then for every $\delta>0$ there is $\epsilon >0$ such that every $F$-free graph $H$ with $n\geq1/\epsilon$ vertices and at least $t_r(n)-\epsilon n^2$ edges contains an $r$-partite subgraph with at least $t_r(n)-\delta n^2$ edges and
moreover, $H$ can be obtained from an extremal graph for $F$ by changing at most $\delta n^2$ edges.
\end{theorem}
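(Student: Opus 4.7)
The plan is to leverage the fact that $\chi(F)=r+1$, so $F$ embeds into every sufficiently blown-up copy of $K_{r+1}$, together with the Erd\H{o}s-Stone-Simonovits theorem, which yields $\ex(n,F)=t_r(n)+o(n^2)$ and the stronger fact that any graph on $n$ vertices with $t_r(n)+\gamma n^2$ edges (for any fixed $\gamma>0$ and $n$ large) already contains $K_{r+1}(t,\ldots,t)$ with $t=|V(F)|$, and hence $F$ itself. In particular, the hypothesis $e(H)\geq t_r(n)-\epsilon n^2$ together with $F$-freeness sandwiches $e(H)=t_r(n)\pm o(n^2)$.

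First, I would clean $H$ by iteratively deleting any vertex whose degree in the current graph is below $(1-1/r-\gamma)n'$, where $n'$ is the current vertex count and $\gamma$ is a small constant chosen much smaller than $\delta$. A standard edge-counting argument shows that only $O(\epsilon n/\gamma)$ vertices are removed before the process terminates, yielding a subgraph $H'$ on $n'\geq n-O(\epsilon n/\gamma)$ vertices with minimum degree at least $(1-1/r-\gamma)n'$. Next, I would take an $r$-partition $V_1\cup\cdots\cup V_r$ of $V(H')$ maximizing the number of cross edges. The usual swap-optimality argument gives $d_{V_i}(v)\leq d(v)/r$ for every $v\in V_i$, which together with the minimum degree bound forces each part to have size $(1+o(1))n/r$.

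The heart of the argument is the claim that the total number of within-part edges is $o(n^2)$. Suppose for contradiction that some $V_i$ contains $\Omega(n^2)$ internal edges; pick any such edge $uv$ with $u,v\in V_i$. The minimum degree bound implies that for each $j\ne i$ the intersection $N(u)\cap N(v)\cap V_j$ has size $\Omega(n)$. Applying a supersaturated Erd\H{o}s-Stone-type argument inside the $(r-1)$-partite graph induced on these common neighborhoods produces a copy of $K_{r-1}(t,t,\ldots,t)$ with one part inside each $V_j$; appending $u$ and $v$ yields $K_{r+1}(t,\ldots,t)$ and hence a copy of $F$, a contradiction.

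Once the within-part edges are shown to be $o(n^2)$, the cross-edge subgraph is $r$-partite with at least $e(H)-o(n^2)\geq t_r(n)-\delta n^2$ edges, giving the first conclusion. For the second, the balanced parts and near-maximum cross-edge count force $H$ to be within $o(n^2)$ edge-changes of the Tur\'an graph $T_r(n)$; since any extremal graph for $F$ is itself within $o(n^2)$ edge-changes of $T_r(n)$ (by applying the same argument to an actual extremal graph), $H$ lies within $\delta n^2$ changes of an extremal graph for $F$. I expect the main obstacle to be the within-part edge bound: tracking the quantitative dependence $\epsilon\ll\gamma\ll\delta$ so that the supersaturated common-neighborhood embedding reliably produces the blown-up $K_{r+1}$ needed to embed $F$, while keeping all error terms genuinely $o(n^2)$.
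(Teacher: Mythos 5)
This theorem is not proved in the paper at all: it is the classical Erd\H{o}s--Simonovits stability theorem, quoted as a black box (Theorem 2.2), so there is no in-paper argument to compare with; judging your sketch on its own merits, it has a genuine gap at precisely the step you call the heart of the argument. You claim that the minimum degree bound $(1-1/r-\gamma)n'$ forces $|N(u)\cap N(v)\cap V_j|=\Omega(n)$ for every within-part edge $uv$ and every $j\neq i$. It does not: that degree bound only says each vertex has at most $(1/r+\gamma)n'$ non-neighbours, and since $|V_j|\approx n'/r$, a single vertex may be non-adjacent to essentially all of $V_j$, and two vertices $u,v$ of substantial internal degree may jointly miss $V_j$ entirely (e.g.\ $u$ missing one half of $V_j$ and $v$ the other, which is compatible with the swap-optimality condition $d_{V_i}(x)\le d_{V_j}(x)$). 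Max-cut optimality gives $d_{V_j}(u)\ge d_{V_i}(u)$, but even combined with the degree bound this only yields $d_{V_j}(u)+d_{V_j}(v)\gtrsim |V_j|$, which is exactly the threshold at which the common neighbourhood can be empty. So no contradiction is produced, and no argument is offered for choosing a ``good'' internal edge; this is exactly the point where the known proofs switch to a genuinely global tool (counting copies of $K_{r+1}$ plus the removal lemma, regularity, Erd\H{o}s--Simonovits' progressive induction, or F\"uredi's argument for the clique case followed by a reduction). Relatedly, your claim that swap-optimality plus the degree bound already forces the parts to have size $(1+o(1))n/r$ is also unjustified at that stage; balancedness is normally deduced only after the internal edges are bounded (as the paper does via Lemma 2.6).

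There is a second, independent gap in the embedding step. Even if the common neighbourhoods were $\Omega(n)$, appending the two single vertices $u,v$ to a $K_{r-1}(t,\ldots,t)$ found inside them produces only $K_{r+1}(1,1,t,\ldots,t)$, not $K_{r+1}(t,\ldots,t)$. A general graph $F$ with $\chi(F)=r+1$ need not embed into a blow-up with two singleton classes (take $F=K_{r+1}(2,\ldots,2)$, or the color-$k$-critical graphs used in this very paper), so no copy of $F$ is obtained and no contradiction follows. Repairing this requires a $K_{t,t}$ of within-part edges whose $2t$ endpoints have a large common neighbourhood in every other part, which makes the first gap strictly worse ($2t$ vertices can jointly miss far more than one part's worth of vertices). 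The cleaning step, the max-cut set-up, and the final ``moreover'' reduction (closeness to $T_r(n)$, then to an extremal graph via Erd\H{o}s--Stone) are fine, but as written the core of the proof does not go through.
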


Let $Z(m,n,a,b)$ be the maximum number of edges of $G\subseteq K(m,n)$ such that $G$ does not contain a copy of $K_{a,b}$ with $a$ vertices from the first class and $b$ vertices from the second class of $K(m,n)$. In 1954, K\"{o}v\'{a}ri, S\'{o}s and Tur\'{a}n \cite{Kovari1954} proved the following classic result.

\begin{theorem}[K\"{o}v\'{a}ri, S\'{o}s and Tur\'{a}n, \cite{Kovari1954}]\label{Kab in bipartite graph}
For any integers $m\geq a$ and $n\geq b$, it holds that
$$Z(m,n,a,b)\leq (b-1)^{1/a}\cdot m n ^{1-1/a} + (a-1)n$$
\end{theorem}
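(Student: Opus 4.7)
The plan is to execute the classical double-counting argument of Kővári, Sós, and Turán, which is exactly tailored to this type of bipartite extremal estimate. Let $G\subseteq K(X,Y)$ with $|X|=m$ and $|Y|=n$, and assume $G$ contains no $K_{a,b}$ having its $a$-class in $X$ and its $b$-class in $Y$. I would count the pairs $(S,v)$ with $S\in\binom{X}{a}$, $v\in Y$, and $S\subseteq N_G(v)$. Summing first over $v\in Y$ expresses this count as $\sum_{v\in Y}\binom{d_G(v)}{a}$, while summing first over $S$ and using $K_{a,b}$-freeness (each $a$-subset of $X$ has at most $b-1$ common neighbors in $Y$) produces the upper bound $(b-1)\binom{m}{a}$. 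This yields
\[
\sum_{v\in Y}\binom{d_G(v)}{a}\ \leq\ (b-1)\binom{m}{a}.
\]

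Next I would apply Jensen's inequality to the convex polynomial $\binom{x}{a}=x(x-1)\cdots(x-a+1)/a!$ on the nonnegative reals (convex for $x\geq a-1$; the range $d_G(v)<a-1$ contributes $0$ and causes no issue). Writing $\bar d=e(G)/n$ for the average degree on the $Y$ side, this gives $\sum_{v\in Y}\binom{d_G(v)}{a}\geq n\binom{\bar d}{a}$, and combining with the previous inequality yields $n\binom{\bar d}{a}\leq(b-1)\binom{m}{a}$.

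Finally I would convert this into the stated edge bound via elementary estimates: $\binom{m}{a}\leq m^a/a!$ and $\binom{x}{a}\geq (x-a+1)^a/a!$ whenever $x\geq a-1$. If $\bar d\geq a-1$, then after cancelling $a!$ we get $(\bar d-a+1)^a\leq (b-1)\,m^a/n$, so $\bar d\leq (b-1)^{1/a}\,m\,n^{-1/a}+(a-1)$; multiplying by $n$ gives $e(G)\leq (b-1)^{1/a}\,m\,n^{1-1/a}+(a-1)n$ as required. In the degenerate range $\bar d<a-1$ we already have $e(G)<(a-1)n$, so the claim holds trivially. The argument is short and contains no real obstacle; the only care point is handling the low-degree/boundary regime $\bar d<a-1$, which is dispatched by the trivial bound above.
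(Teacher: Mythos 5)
Your proof is correct, and it is precisely the classical double-counting-plus-convexity argument of Kővári, Sós, and Turán; the paper does not reprove this theorem but simply cites \cite{Kovari1954}, which contains exactly this argument. Your handling of the degenerate regime $\bar d<a-1$ and the convexification of $\binom{x}{a}$ on $[0,a-1]$ is the standard and correct care point.
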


We need the following special form of Theorem~\ref{Kab in bipartite graph}.

\begin{lemma}\label{bipartite graph with large degree contain bipartite complete graph}
For every real $\delta>0$ and integer $m\geq 1$, there exists a real $\epsilon>0$ such that the following holds.
If $G$ is an $(m,n)$-bipartite graph where each vertex in the partite set of size $m$ has degree at least $\delta n$,
then $G$ contains a copy of $K_{\delta m,\epsilon n}$.
\end{lemma}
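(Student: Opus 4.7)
The plan is to derive this lemma as a direct instance of the classical K\"{o}v\'{a}ri--S\'{o}s--Tur\'{a}n bound stated in Theorem~\ref{Kab in bipartite graph}. Let $A$ and $B$ denote the two partite classes of $G$, with $|A|=m$ and $|B|=n$, and set $a=\lceil\delta m\rceil$; we may assume $\delta m\geq 1$, since otherwise the conclusion is vacuous. The hypothesis that every vertex of $A$ has degree at least $\delta n$ in $G$ immediately yields the global edge bound
$$e(G)\;=\;\sum_{u\in A}d_G(u)\;\geq\;\delta m n.$$

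Next I would fix a constant $\epsilon=\epsilon(\delta,m)>0$ small enough that $\epsilon^{1/a}<(\delta m-a+1)/m$; the right-hand side is strictly positive because $a=\lceil\delta m\rceil<\delta m+1$, so such an $\epsilon$ exists and depends only on $\delta$ and $m$. Then set $b=\lceil\epsilon n\rceil$ and suppose, toward a contradiction, that $G$ contains no copy of $K_{a,b}$ with the $a$-side lying in $A$ and the $b$-side in $B$. Applying Theorem~\ref{Kab in bipartite graph} to $G$ gives
$$\delta m n\;\leq\;e(G)\;\leq\;(b-1)^{1/a}\,m n^{1-1/a}+(a-1)n,$$
and dividing through by $n$ yields $\delta m-a+1\leq\epsilon^{1/a}m+o(1)$, which contradicts the choice of $\epsilon$ for all sufficiently large $n$. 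Hence $G$ must contain the desired copy of $K_{a,b}$.

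Since $a\geq\delta m$ and $b\geq\epsilon n$, this bipartite clique contains the $K_{\delta m,\epsilon n}$ required by the statement. The only mildly delicate point in the argument is verifying that $\delta m-a+1>0$, which is automatic with the ceiling choice $a=\lceil\delta m\rceil$; beyond that, the proof is just the standard K\"{o}v\'{a}ri--S\'{o}s--Tur\'{a}n calculation in the presence of a linear minimum-degree condition, and I do not anticipate any genuine obstacle.
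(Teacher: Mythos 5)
Your proof is correct and follows essentially the same route as the paper: both apply the K\"{o}v\'{a}ri--S\'{o}s--Tur\'{a}n bound (Theorem~\ref{Kab in bipartite graph}) with $\epsilon$ chosen small enough that the resulting upper bound on $e(G)$ falls below the guaranteed $\delta mn$, the only cosmetic difference being that you round $a=\lceil\delta m\rceil$, $b=\lceil\epsilon n\rceil$ whereas the paper plugs in $a=\delta m$, $b=\epsilon n$ directly. One small remark: the $o(1)$ and ``for all sufficiently large $n$'' in your final step are unnecessary, since $(b-1)^{1/a}\leq(\epsilon n)^{1/a}$ already gives $\delta m-a+1\leq\epsilon^{1/a}m$ exactly for every $n$, so the contradiction holds unconditionally.
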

\begin{proof}
Take $\epsilon>0$ small enough so that $\epsilon^{1/(\delta m)}<1/m$.
Then we have $e(G)\geq \delta mn> \epsilon^{1/(\delta m)} m  n +(\delta m-1)n> (\epsilon n-1)^{1/(\delta m)}   m n ^{1-1/(\delta m)} + (\delta m-1)n$.
Now the conclusion follows directly from Theorem~\ref{Kab in bipartite graph}.
\end{proof}

The next lemma provides a handy tool for counting matchings of given size. 

\begin{lemma}\label{number of M_k with maximum degree}
Let $\epsilon\in (0,1)$ be a small constant.
Let $G$ be an $n$-vertex graph with $e(G)\geq 2k\epsilon n $ and maximum degree $\Delta(G)\leq\epsilon n$.
Then $\mathcal{N}_{M_k}(G)\geq (k-1)! (2\epsilon n)^{k}.$
\end{lemma}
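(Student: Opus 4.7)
The plan is a greedy (sequential) construction: I will build a matching $\{e_1,\ldots,e_k\}$ one edge at a time, and use the maximum-degree hypothesis at each step to guarantee enough valid extensions. Counting ordered $k$-tuples of pairwise vertex-disjoint edges and then normalizing recovers $\mathcal{N}_{M_k}(G)$.

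Concretely, suppose inductively that edges $e_1,\ldots,e_{j-1}$ have been chosen and form a matching spanning the vertex set $S_{j-1}$ with $|S_{j-1}|=2(j-1)$. Since every vertex of $G$ has degree at most $\epsilon n$, the number of edges of $G$ with at least one endpoint in $S_{j-1}$ is at most $|S_{j-1}|\cdot \Delta(G)\leq 2(j-1)\epsilon n$. Combined with the hypothesis $e(G)\geq 2k\epsilon n$, the number of edges available to extend the matching (edges disjoint from $S_{j-1}$) is therefore at least
\[
e(G) - 2(j-1)\epsilon n \;\geq\; 2(k-j+1)\epsilon n.
\]
Multiplying these bounds across $j=1,\ldots,k$ produces a lower bound on the number of ordered $k$-tuples of pairwise vertex-disjoint edges, which after dividing by the orderings of an unordered matching yields a lower bound on $\mathcal{N}_{M_k}(G)$.

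The one step I expect to require real care is the precise constant tracking needed to obtain the stated prefactor $(k-1)!$. A direct divide-by-$k!$ of the naive product $\prod_{j=1}^{k} 2(k-j+1)\epsilon n = k!\,(2\epsilon n)^k$ only gives $(2\epsilon n)^k$, so to upgrade to $(k-1)!(2\epsilon n)^k$ I would recast the argument as an induction on $k$ using the double-counting identity
\[
k\cdot \mathcal{N}_{M_k}(G) \;=\; \sum_{M'\in \mathcal{M}_{k-1}(G)} \bigl|\{f\in E(G):f\cap V(M')=\emptyset\}\bigr|,
\]
together with the inductive estimate on $\mathcal{N}_{M_{k-1}}(G)$ and the observation that the bound $|S_{j-1}|\Delta(G)$ overcounts the edges of $G$ lying entirely inside $S_{j-1}$ (which in particular contain the already-selected matching edges). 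This sharper bookkeeping, together with the hypothesis $e(G)\geq 2k\epsilon n$, yields the stated lower bound. The conceptual content is unchanged from the greedy sketch; only the constant tracking needs attention, and this is the only step where slippage can occur.
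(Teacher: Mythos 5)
Your diagnosis of the tension is exactly right, but your proposed repair does not close it, and in fact it cannot, because the stated inequality is false as written. Tracing your own double-counting identity more carefully: from a $(k-1)$-matching $M'$ there are at least $e(G)-2(k-1)\Delta(G)\geq 2\epsilon n$ vertex-disjoint extensions, so $k\,\mathcal{N}_{M_k}(G)\geq 2\epsilon n\cdot\mathcal{N}_{M_{k-1}}(G)$; unrolling this (with the analogous bound $j\,\mathcal{N}_{M_j}\geq 2(k-j+1)\epsilon n\cdot\mathcal{N}_{M_{j-1}}$ at each level) reproduces $\mathcal{N}_{M_k}(G)\geq\prod_{j=1}^{k}\frac{2(k-j+1)\epsilon n}{j}=(2\epsilon n)^k$, exactly the constant you obtained from the one-shot product. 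The overcounting you observe in $|S_{j-1}|\Delta(G)$ only recovers $O(k)$ extra edges per step, negligible next to $\epsilon n$, so it cannot supply a $(k-1)!$ gain. Moreover, no argument can: take $G$ to be a vertex-disjoint union of $2k$ stars, each with $\epsilon n$ edges (pad with isolated vertices to reach $n$ vertices). Then $e(G)=2k\epsilon n$, $\Delta(G)=\epsilon n$, and since a matching uses at most one edge per star, $\mathcal{N}_{M_k}(G)=\binom{2k}{k}(\epsilon n)^k$. But $\binom{2k}{k}<(k-1)!\,2^k$ already for $k=4$ ($70<96$) and for all larger $k$, so the claimed bound $(k-1)!(2\epsilon n)^k$ fails.

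What has gone wrong is in the source: the paper's proof computes the number of \emph{ordered} $(k-1)$-tuples of edges extending a fixed edge $e$ as $(2(k-1)\epsilon n)(2(k-2)\epsilon n)\cdots(2\epsilon n)=(k-1)!(2\epsilon n)^{k-1}$ and calls this ``the number of copies of $M_k$ containing $e$''; since $\mathcal{N}_{M_k}$ is defined via unordered edge sets, this quantity should first be divided by $(k-1)!$, after which the double count over the $k$ edges of each matching gives the correct $\mathcal{N}_{M_k}(G)\geq(2\epsilon n)^k$. That weaker bound is all that is used downstream (in the proof of Claim~\rom{1} the lemma is only invoked to force $\mathcal{N}_{M_k}\cdot\tfrac12 d(m,F)>\epsilon_1 n^{f-k}$, and the constant hierarchy absorbs the missing $(k-1)!$), so the paper's applications are unaffected. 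Your best move is to prove and use $\mathcal{N}_{M_k}(G)\geq(2\epsilon n)^k$ rather than chase the spurious $(k-1)!$ factor.
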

\begin{proof}
For each edge $e$ in $G$, the number of copies of $M_k$ containing $e$ is at least $(2k\epsilon n-2\epsilon n)(2k\epsilon n-4\epsilon n)\ldots 2\epsilon n= (k-1)!  (2\epsilon n)^{k-1}$.
Thus we have $\mathcal{N}_{M_k}(G)\geq  2k\epsilon n (k-1)!(2\epsilon n)^{k-1} /k\geq (k-1)!(2\epsilon n)^{k}.$ The proof of Lemma~\ref{number of M_k with maximum degree} is complete.
\end{proof}

We also need the following useful lemma proved by Mubayi \cite{mubayi2010}.

\begin{lemma}[Mubayi, Lemma 4 in \cite{mubayi2010}]\label{Mubayi}
Suppose that $r\geq 2$ is fixed, $n$ is sufficiently large, $s<n$ and $n_1+\ldots +n_r=n$. If $\sum_{1\leq i<j\leq r} n_i n_j\geq t_r(n)-s$,
then $\lfloor n/r \rfloor -s\leq n_i\leq \lceil n/r \rceil +s$ for all $i\in [r]$.
\end{lemma}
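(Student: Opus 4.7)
The plan is to construct, for each positive integer $s$, an explicit non-bipartite stable graph $F_s$ and to establish both the upper and lower threshold behaviors. The construction will be of the form $F_s=J+K_{s,s}$ (using the join notation $+$ of the paper), where $J$ is a carefully chosen \emph{non-color-critical} graph of small chromatic number; for instance, $J$ may be taken to be the Petersen graph (chromatic number $3$, not color-critical since its girth is $5$) or a Kneser graph $K(t,2)$. One then verifies that $F_s$ is non-bipartite, stable (with extremal graph $G^\ast$ obtainable from a Tur\'an graph together with $J$-free additions inside parts), and itself non-color-critical for $s\geq 2$ — this last property is crucial to escape the conclusion of Theorem~\ref{Thm:PY}. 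The design principle is that every copy of $F_s$ in a graph $H$ close to $G^\ast$ must use a $K_{s,s}$ supported in a specific location of the symmetric difference $E(H)\triangle E(G^\ast)$, making the $F_s$-count directly sensitive to $K_{s,s}$-patterns among the modified edges.

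For part (1), the plan is to follow the blueprint of \cite{mubayi2010,pikhurko2017}. Given $H$ with $e(H)=\ex(n,F_s)+q$ and $1\leq q\leq \epsilon n^{1-1/s}$, apply the Removal Lemma (Theorem~\ref{removal lemma}) and the Stability Theorem (Theorem~\ref{stability theorem}) to locate a canonical $r$-partition close to the one in $G^\ast$, and use Lemma~\ref{Mubayi} to control deviations in the part sizes. The key technical input is that in this regime the bipartite excess edges between any two parts are too sparse to contain $K_{s,s}$: this is precisely the K\H{o}v\'ari--S\'os--Tur\'an regime provided by Lemma~\ref{bipartite graph with large degree contain bipartite complete graph}, and it is here that the exponent $n^{1-1/s}$ enters. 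Combined with the design principle of $F_s$, this no-$K_{s,s}$ conclusion forces every copy of $F_s$ in $H$ to be charged to the added edges on top of $G^\ast$, and a counting argument using Lemma~\ref{number of M_k with maximum degree} shows that pulling any deviation back to $G^\ast$ strictly decreases the count, yielding $h_{F_s}(n,q)=t_{F_s}(n,q)$.

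For part (2), the plan is to exhibit an explicit competitor $H^\star$ on $n$ vertices with $\ex(n,F_s)+q$ edges and strictly fewer copies of $F_s$ than any graph of the form ``$G^\ast$ with $q$ added edges''. The construction will modify $G^\ast$ by \emph{removing} a clever $K_{s,s}$-related substructure — thereby destroying a super-linear number of $F_s$-copies in one stroke — and \emph{adding back} slightly more edges elsewhere to reach the prescribed edge count. The saved-copies-versus-newly-created-copies bookkeeping is tuned so that this swap becomes strictly profitable exactly when $q\geq n^{1-1/s}/\epsilon$; for smaller $q$, the added edges create more $F_s$-copies than the removal destroys, in agreement with part (1).

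The main obstacle will be calibrating the construction of $F_s$ so that the two thresholds align at $\Theta(n^{1-1/s})$. The counting in part (1) must remain tight all the way up to $q=\epsilon n^{1-1/s}$, meaning the implication ``no $K_{s,s}$ in the excess edges $\Rightarrow$ no improvement over $G^\ast$'' must be robust against every possible perturbation of $G^\ast$, not only the most obvious local ones. Simultaneously, the competitor $H^\star$ in part (2) must strictly beat $t_{F_s}(n,q)$ already at $q$ of order $n^{1-1/s}/\epsilon$, not at some slightly larger order such as $n^{1-1/s}\log n$. Forcing these matching thresholds will dictate the precise choice of the non-color-critical gadget $J$ and the exact way $K_{s,s}$ is attached to it.
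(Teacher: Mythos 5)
The statement you were asked to prove is Lemma~\ref{Mubayi}: an elementary quantitative fact stating that if a partition $n_1+\ldots+n_r=n$ satisfies $\sum_{i<j}n_in_j\geq t_r(n)-s$, then every part size lies within $s$ of $n/r$. Your proposal does not address this statement at all. What you have written is a proof plan for Theorem~\ref{Thm:hVSt} (the $q=\Theta(n^{1-1/s})$ threshold result): you construct candidate graphs $F_s$, invoke the Removal Lemma and the Stability Theorem, and design a competitor construction for the regime $q\geq n^{1-1/s}/\epsilon$. Indeed, in the middle of your plan you explicitly \emph{use} ``Lemma~\ref{Mubayi} to control deviations in the part sizes'' --- that is, you treat the very statement you were supposed to prove as a known tool. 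So there is no partial credit to salvage here as a proof of the lemma: the target statement is never engaged.

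For the record, the lemma itself needs only a short convexity/shifting argument (in the paper it is simply quoted from Mubayi's work and not reproved). Since $\sum_{i<j}n_in_j=\tfrac12\bigl(n^2-\sum_i n_i^2\bigr)$, the quantity is maximized, with value $t_r(n)$, exactly by the balanced partition, and each elementary shift that moves one vertex from a part of size $a$ to a part of size $b$ with $a\geq b+2$ increases $\sum_{i<j}n_in_j$ by $a-b-1\geq 1$. If some $n_i$ exceeded $\lceil n/r\rceil+s$ (or fell below $\lfloor n/r\rfloor-s$), one would need more than $s$ such shifts to reach the balanced partition, forcing $\sum_{i<j}n_in_j\leq t_r(n)-(s+1)<t_r(n)-s$, a contradiction. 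That two-line argument, not a supersaturation construction, is what a proof of this statement requires.
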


\subsection{Color-$k$-critical graphs}\label{subsec:matching-critical}

\noindent In this subsection, we introduce a significant family of graphs that plays a crucial role in our proofs: the color-$k$-critical graphs.
We will also present an extremal result due to Simonovits for graphs in this family.

\begin{definition}\label{Def:k-critical}
\emph{For any positive integer $k$, a graph $G$ is called {\bf color-$k$-critical} if
\begin{itemize}
\item[(i).] there exist $k$ suitable edges whose removal decreases its chromatic number, and
\item[(ii).] deleting any $k-1$ vertices does not decrease its chromatic number.
\end{itemize}}
\end{definition}
\noindent
It is clear from the definition that any $k$ edges whose removal decreases $\chi(G)$ must form a matching of size $k$.
In particular, color-$1$-critical graphs are just color-critical graphs.\footnote{This is why we refer to this family as color-$k$-critical, as it naturally extends the concept of color-critical graphs.}

In \cite{Simonovits1974}, Simonovits determined the unique extremal graph for every color-$k$-critical graph.

\begin{definition}
Denote by $H(n,r,k)=K_{k-1}+ T_r(n-k+1)$ the $n$-vertex graph obtained by joining each vertex of the Tur\'{a}n graph $T_r(n-k+1)$ to each vertex of a copy of $K_{k-1}$. Let $h(n,r,k)=e(H(n,r,k))$.
\end{definition}

\begin{theorem}[Simonovits, Theorem 2.2 in \cite{Simonovits1974}]\label{extremal graph for mk-critical graph}
Let $k\geq 1$ and let $F$ be a color-$k$-critical graph with $\chi(F)=r+1$.
If $n$ is sufficiently large, then $H(n,r,k)$ is the unique extremal graph for $F$.
\end{theorem}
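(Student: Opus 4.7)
The plan is to combine the Erd\H{o}s--Simonovits Stability Theorem (Theorem \ref{stability theorem}) with both defining properties of color-$k$-criticality to pin down the extremal structure. First I would verify that $H(n,r,k)=K_{k-1}+T_r(n-k+1)$ is itself $F$-free: were $F$ embedded into $H(n,r,k)$, then deleting from the copy the at most $k-1$ vertices that fall in the $K_{k-1}$ leaves a subgraph of $T_r(n-k+1)$, which is $r$-colorable. But property (ii) of color-$k$-criticality says that removing any $k-1$ vertices of $F$ preserves $\chi(F)=r+1$, a contradiction. Hence $\ex(n,F)\ge e(H(n,r,k))=t_r(n-k+1)+(k-1)(n-k+1)+\binom{k-1}{2}$, which equals $t_r(n)+\Theta_k(n)$.

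Now let $G$ be any $F$-free graph on $n$ vertices with $e(G)\ge e(H(n,r,k))$. Since $e(G)\ge t_r(n)-O_k(n)$, Theorem \ref{stability theorem} furnishes a partition $V(G)=V_1\sqcup\cdots\sqcup V_r$ with only $o(n^2)$ within-part edges and $o(n^2)$ missing crossing edges. I would choose such a partition maximizing the number of crossing edges, forcing $d_{V_i}(v)\le d_{V_j}(v)$ for every $v\in V_i$ and every $j\ne i$; Lemma \ref{Mubayi} then gives $|V_i|=n/r+o(n)$ for each $i$. Define the \emph{core} $T\subseteq V(G)$ to consist of those vertices incident to more than a tiny fraction of defective configurations, i.e.\ within-part edges or cross non-edges. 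A degree-counting argument exploiting the edge surplus $e(G)-t_r(n)=\Theta(n)$ shows $|T|=O_k(1)$; the crux of the proof is then the claim $|T|\le k-1$.

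To prove $|T|\le k-1$, suppose for contradiction that $|T|\ge k$. Property (i) writes $E(F)=E(F_0)\sqcup M$ where $M=\{e_1,\dots,e_k\}$ is a matching and $F_0:=F-M$ admits a proper $r$-coloring $c:V(F)\to[r]$ under which each $e_j$ is monochromatic. I would locate $k$ vertex-disjoint within-part edges $f_1,\dots,f_k$ of $G$ inside (or adjacent to) $T$, which exist because $|T|\ge k$ and each vertex of $T$ carries many defective incidences. Then embed $F_0$ into $G$ by mapping each color class $c^{-1}(i)$ into $V_i$, while sending the endpoints of $e_j$ onto the endpoints of $f_j$. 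The backbone embedding is produced by a greedy/K\H{o}v\'ari--S\'os--Tur\'an-style argument via Theorem \ref{Kab in bipartite graph} and Lemma \ref{bipartite graph with large degree contain bipartite complete graph}, exploiting the near-completeness of the bipartite subgraphs between distinct $V_i$'s after we remove $T$ and a few atypical vertices. This produces $F\subseteq G$, a contradiction.

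Finally, with $|T|\le k-1$ and $G-T$ a genuinely $r$-partite graph, the extremality $e(G)\ge e(H(n,r,k))$ together with Tur\'an's theorem applied to $G-T$ pins down $|T|=k-1$, $G[T]=K_{k-1}$, the bipartite graph between $T$ and $V(G)\setminus T$ is complete, and $G-T=T_r(n-k+1)$; that is, $G=H(n,r,k)$, proving uniqueness. The main obstacle I anticipate is the simultaneous embedding in the third paragraph: one must produce $k$ vertex-disjoint defective edges of $G$ and simultaneously embed $F_0$ so that the endpoints of each $e_j\in F$ land precisely on the pre-chosen $f_j\subseteq G$. Weaving these two tasks together requires delicate Hall/KST-type choices aligned with the monochromatic matching structure forced by color-$k$-criticality, and this is where both properties (i) and (ii) of Definition \ref{Def:k-critical} are used in combination.
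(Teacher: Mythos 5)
You should first note that the paper does not actually prove this statement: it is quoted from Simonovits \cite{Simonovits1974}, so there is no internal proof to compare against. The closest material in the paper is Subsection~\ref{subsec:refined} (Claim~\rom{1}) together with Lemmas~\ref{counting F in Tp(n) with adding a matching} and~\ref{couting F in near complete graph within a mathing}, which carry out the analogous stability-plus-embedding analysis in the harder supersaturation setting; those indicate what a complete argument requires. Your overall strategy (stability, max-cut, balanced parts via Lemma~\ref{Mubayi}, a bounded ``core'', then an embedding contradiction) is the standard and correct route, and your first paragraph — $H(n,r,k)$ is $F$-free because a copy of $F$ would survive deletion of at most $k-1$ vertices and land in an $r$-partite graph, contradicting property (ii) of Definition~\ref{Def:k-critical} — is exactly right.

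The genuine gap is your third paragraph. First, your core $T$ mixes two kinds of defectiveness: a vertex may lie in $T$ solely because of many missing cross edges and then supplies no within-part edges at all, so ``$|T|\ge k$ yields $k$ vertex-disjoint within-part edges'' does not follow as stated; in the paper's Claim~\rom{1} such vertices are split into classes ($U$, $U'$, $T_i$) and the low-degree ones are eliminated by edge counting, not by an embedding. Second, even given $k$ disjoint within-part edges at high-degree vertices, producing a copy of $F$ is not a routine greedy step: one needs the $k$ special endpoints to have large \emph{common} neighbourhoods in $r-1$ classes (this is what Theorem~\ref{Kab in bipartite graph} and Lemma~\ref{bipartite graph with large degree contain bipartite complete graph} are used for), and, crucially, one needs a structural fact about $F$ itself, namely that some critical matching of $F$ can be placed monochromatically in a way compatible with where your edges $f_1,\dots,f_k$ sit — in the paper this is precisely the positivity of $d(n,F)$, i.e.\ that $T_r$ plus a $k$-matching attached to a single class contains $F$. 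Your remark that each $e_j$ is monochromatic under $c$ is true but itself needs an argument (cover $M\setminus\{e_j\}$ by $k-1$ vertices and invoke (ii)), and it is weaker than what you need, since for a fixed colouring distinct $e_j$'s may be forced into distinct colour classes; you never explain how the colour pattern of the $e_j$'s is matched to the parts containing the $f_j$'s, which is exactly the ``weaving'' you flag as an obstacle — so the heart of the proof is missing. Finally, the closing paragraph is also too quick: $|T|\le k-1$ does not make $G-T$ $r$-partite (non-core vertices may still carry a few within-part edges), so before any Tur\'an-type edge comparison you must show that the $k-1$ special vertices are complete to the rest and that no further bad or missing edges survive, and each of these steps again requires an embedding argument (the analogues of Claims~\rom{4} and~\rom{5} in Section~\ref{Section:proof of theorem 1.6}), not merely counting.
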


It is already known that (see, i.e., \cite{Simonovits1974,Simonovits1999})
the family of color-$k$-critical graphs is rich, including disjoint unions of cliques $K_r$, the Petersen graph, and the dodecahedron graph.
In Subsection~\ref{Subs:Kneser}, we show that Kneser graphs $K(t,2)$ for every $t\geq 6$ are color-$k$-critical graphs for $k=3$ (and actually we show that they are color-$3$-critical with additional nice properties).

We conclude this section with the following lemma. 
It is easy to see that the only bipartite color-$k$-critical graph is the matching of size $k$.

\begin{lemma}\label{lem:color-k-critical-is-stable}
Any non-bipartite color-$k$-critical graph is stable.
\end{lemma}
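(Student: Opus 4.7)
The plan is to deduce stability of any non-bipartite color-$k$-critical graph $F$ directly from two ingredients already available in the excerpt: Simonovits' exact Tur\'an result (Theorem~\ref{extremal graph for mk-critical graph}) identifying the unique extremal graph as $H(n,r,k)=K_{k-1}+T_r(n-k+1)$, and the Erd\H{o}s-Simonovits stability theorem (Theorem~\ref{stability theorem}). Since $F$ is non-bipartite, writing $\chi(F)=r+1$ we have $r\geq 2$, so both tools are applicable.

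The uniqueness requirement in the definition of ``stable'' is immediate from Theorem~\ref{extremal graph for mk-critical graph}: for all sufficiently large $n$, the unique $n$-vertex extremal graph for $F$ is $H(n)=H(n,r,k)$. For the approximation requirement, I would first observe, by directly expanding $H(n,r,k)=K_{k-1}+T_r(n-k+1)$, that
$$\ex(n,F)=h(n,r,k)=t_r(n-k+1)+(k-1)(n-k+1)+\binom{k-1}{2}=t_r(n)+O_{r,k}(n),$$
so in particular $\ex(n,F)=t_r(n)+O(n)$ and $\ex(n,F)=O(n^2)$. Given any $n$-vertex $F$-free graph $G$ with $e(G)\geq(1-o(1))\ex(n,F)$, these two estimates combine to give $e(G)\geq \ex(n,F)-o(n^2)\geq t_r(n)-o(n^2)$. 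Since $\chi(F)=r+1$, Theorem~\ref{stability theorem} now applies to $G$ and certifies that $G$ can be obtained from an extremal graph for $F$ by changing at most $o(n^2)$ edges; combined with the uniqueness already established, this extremal graph is forced to be $H(n,r,k)$, which is exactly the stability conclusion sought.

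I do not anticipate any serious obstacle: once one notices that $H(n,r,k)$ and the Tur\'an graph $T_r(n)$ differ in only $O(n)$ edges, the whole argument reduces to checking that the hypothesis of Theorem~\ref{stability theorem} is met and then quoting Theorem~\ref{extremal graph for mk-critical graph}. The only minor care needed is translating the multiplicative slack ``$(1-o(1))\ex(n,F)$'' into the additive slack ``$t_r(n)-\epsilon n^2$'' required by Theorem~\ref{stability theorem}, which is immediate from $\ex(n,F)=O(n^2)$ via standard $\varepsilon$-$\delta$ bookkeeping.
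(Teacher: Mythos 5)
Your proof is correct and follows essentially the same route as the paper's: invoke Theorem~\ref{extremal graph for mk-critical graph} for uniqueness of the extremal graph, note that $\ex(n,F)$ is within $o(n^2)$ of $t_r(n)$, and then apply Theorem~\ref{stability theorem}. The only cosmetic difference is that the paper obtains $\ex(n,F)=t_r(n)+o(n^2)$ via the Erd\H{o}s--Stone--Simonovits theorem, whereas you derive the sharper $\ex(n,F)=t_r(n)+O(n)$ by directly counting edges in $H(n,r,k)$; both are perfectly adequate inputs to the stability theorem.
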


\begin{proof}
We prove the following stronger assertion that for any non-bipartite graph $F$, if the extremal graph for $F$ is unique for sufficiently large $n$, then $F$ is stable.
Together with Theorem~\ref{extremal graph for mk-critical graph}, this implies the lemma. 

Let $\chi(F)=r+1$ with $r\geq 2$. Let $n$ be sufficiently large, and $H(n)$ be the unique extremal graph for $F$. 
To prove this assertion, it is sufficient to show that every $n$-vertex $F$-free graph $G$ with $(1-o(1))\ex(n,F)$ edges can be obtained from $H(n)$ by changing at most $o(n^2)$ edges.
By Erd\H{o}s-Stone-Simonovits Theorem, $\ex(n,F)=t_r(n)+o(n^2)$.
So every $n$-vertex $F$-free graph $G$ with $(1-o(1))\ex(n,F)$ edges also contains at least $t_r(n)-o(n^2)$ edges.
By Theorem~\ref{stability theorem}, $G$ can be obtained from $H(n)$ by changing at most $o(n^2)$ edges, as desired.
\end{proof}

\section{Counterexamples to Conjecture~\ref{con-mubayi}}\label{Sec:example}
In this section, we prove Theorem~\ref{main-example} by providing a counterexample to Conjecture~\ref{con-mubayi} for every integer $q\geq 4$.
As we shall see later, this proof in fact leads to infinitely many counterexamples to Conjecture~\ref{con-mubayi}.
We will construct a non-bipartite stable graph $F$ and show that there exists a small constant $b_F>0$ such that
for any sufficiently large integer $n$ and any integer $4\leq q\leq b_F n$, $$\frac{h_F(n,q)}{q\cdot c(n,F)}\leq 1-b_F.$$
Let $k\geq 2$ be any integer.
Throughout this section, we let $A=M_k$ and $B=P_4\cup M_{k-2}$ be two fixed graphs and define $F=A+B$ (See Figure 1 (a)).
It is clear that $\chi(F)=4$.

We first explain that $F$ is a stable color-$k$-critical graph.
If we delete any $k-1$ vertices from $F$,
the resulting graph contains at least one edge in $A$ and at least one edge in $B$ and hence contains a copy of $K_4$.
Moreover, it is easy to see that removing all $k$ edges in $A$ will decrease the chromatic number by one.
Hence, $F$ is indeed color-$k$-critical.
By Lemma~\ref{lem:color-k-critical-is-stable}, we see that $F$ is also stable.

Let $X\cup V_1\cup V_2\cup V_3$ be the partition of $V(H(n,3,k))$ 
such that $X$ induces the clique of size $k-1$ and each $V_i$ is an independent set of size $n_i$ for $i\in [3]$,
where $\lceil(n-k+1)/3\rceil= n_1\geq n_2\geq n_3=\lfloor(n-k+1)/3\rfloor$.
Let $H_i$ be the graph obtained from $H(n,3,k)$ by adding one edge into $V_i$ for $i\in [3]$.

Throughout the rest of the proof, let $\{i,j,\ell\}=\{1,2,3\}$.
We now consider all possible embeddings of $F$ in each $H_i$.
Suppose that $H_i$ contains a copy of $F=A+B$.
We claim that
\begin{equation}\label{equ:count F}
\mbox{ either } V(A)\subseteq X\cup V_i \mbox{ and } V(B) \subseteq V_j\cup V_\ell, \mbox{ or } V(B)\subseteq X\cup V_i \mbox{ and } V(A) \subseteq V_j\cup V_\ell.
\end{equation}
To see this, first suppose that $V_j\cup V_\ell$ contains some vertices $x\in A$ and $y\in B$.
Then $H_i[V_j\cup V_\ell]$ cannot contain an edge from $A$ or from $B$;
otherwise this edge (say in $A$) together with the vertex $y$ in $B$ will form a triangle (by the definition of $F$) in $H_i[V_j\cup V_\ell]$, but $H_i[V_j\cup V_\ell]$ is bipartite, a contradiction.
Hence $H_i[V_j\cup V_\ell]$ contains at most $k$ vertices from $A$ and at most $k$ vertices from $B$.
That says, $H_i[X\cup V_i]$ must contains at least $k$ vertices from $A$ and at least $k$ vertices from $B$, and hence contains a copy of $K_{k,k}$, a contradiction.
So $V_j\cup V_\ell$ has either (i) no vertices from $A$ or (ii) no vertices from $B$.
Suppose (i) occurs.
If $V_j\cup V_\ell$ contains at most $2k-1$ vertices of $B$, then $H_i[X\cup V_i]$ contains all $2k$ vertices of $A$ and at least one vertex of $B$.
In particular, $H_i[X\cup V_i]$ contains a copy of $K_1+M_k$, but this is a contradiction.
Hence when (i) occurs, $V_j\cup V_\ell$ must contain all vertices from $B$, implying \eqref{equ:count F}.
The other case (ii) can be derived similarly. This proves \eqref{equ:count F}.

Let $c_i(n,F)$ denote the number of copies of $F$ in $H_i$.
Using \eqref{equ:count F} we can compute $c_i(n,F)$ precisely.
We note that the numbers of copies of $A$ and $B$ in $K_{k,k}$ are $k!$ and $k!k(k-1)$, respectively.
Moreover, the numbers of copies of $A$ and $B$ in $K_{k-1}+(K_2\cup I_{k-1})$ are $(k-1)!$ and $(\frac{3k}{2}-1)(k-1)(k-1)!$, respectively.\footnote{
The later one holds because that the numbers of copies of $B=P_4\cup M_{k-2}$ in $K_{k-1}+(K_2\cup I_{k-1})$ with the middle edge of $P_4$ lying inside $K_{k-1}$, between $K_{k-1}$ and $I_{k-1}$,
and between $K_{k-1}$ and $K_2$ are ${k-1 \choose 2}(k-1)!$,  $(k-1)(k-2)(k-1)!$, and $2(k-1)(k-1)!$, respectively, which add up to $(\frac{3k}{2}-1)(k-1)(k-1)!$.}
Following \eqref{equ:count F}, there are only two ways of embedding $F$ in $H_i$, which leads to
\begin{align*}
c_i(n,F)=& \left((k-1)!{n_i-2 \choose k-1}\right)\cdot \left(k!k(k-1){n_j \choose k}{n_\ell \choose k}\right)\\
&~~~ + \left((3k/2-1)(k-1)(k-1)!{n_i-2 \choose k-1}\right)\cdot \left(k!{n_j \choose k}{n_\ell \choose k}\right)\\
=&\frac{(k-1)(5k-2)}{2}\cdot (k-1)!k!\cdot {n_i-2 \choose k-1}{n_j \choose k}{n_\ell \choose k}.
\end{align*}
Since ${(x+1)-2 \choose k-1}{x \choose k}<{x-2 \choose k-1}{x+1 \choose k}$ for sufficiently large integers $x$, we have
\begin{equation}\label{eq1-for-counter-example}
c(n,F)=\min_{1\leq i\leq 3} c_i(n,F)=c_1(n,F)=\frac{(k-1)(5k-2)}{2}\cdot (k-1)!k!\cdot {n_1-2 \choose k-1}{n_2 \choose k}{n_3 \choose k}.
\end{equation}

In what follows, we will construct an $n$-vertex graph $H^*$ with $\ex(n,F)+q=e(H(n,3,k))+q$ edges which contains at most $(1-b_F)\cdot q\cdot c(n,F)$ copies of $F$.
As indicated in the beginning of this section, here we take $n$ to be sufficiently large and $q$ to be any integer at least $4$ and at most $b_F\cdot n$ for some small constant $b_F>0$.
To construct $H^*$, we first take $H^\prime=I_{k-1}+T_3(n-k+1)$ and let $V_1, V_2, V_3$ be the three partite sets of $T_3(n-k+1)$ with $n_i=|V_i|$ and $n_1\geq n_2\geq n_3$.
Let $t=q+{k\choose 2}+1$.
Now define $H^*$ to be the graph obtained from $H^\prime$ by first adding a copy of the star $S_{t}$ into $V_1$ and then removing the $k-1$ edges between the center of $S_{t}$ and $I_{k-1}$ of $H^\prime$.

First observe that indeed $H^*$ has $e(H^\prime)+(t-1)-(k-1)=e(H(n,3,k))-\binom{k-1}{2}+q+\binom{k}{2}-k=e(H(n,3,k))+q$ edges.
Note that any copy of $F$ in $H^*$ using $w\geq 2$ edges of $S_t$ must contain $w+1$ vertices of $S_t$, all vertices in $I_{k-1}$, and $3k-w$ other vertices.
So the number of copies of $F$ in $H^*$ using at least two edges from $S_t$ is $O_F(\sum_{w\geq 2} q^w n^{3k-w})=O_F(q^2n^{3k-2})$, where the inequality holds as $q/n\leq b_F$.
Next we consider the number $\mathcal{N}_1$ of copies of $F$ in $H^*$ using exactly one edge from $S_t$.
We point out that every such $F$ use all $k-1$ vertices of $I_{k-1}$ and thus the claim \eqref{equ:count F} applies when counting $\mathcal{N}_1$.
Since the $k$ edges between the center of $S_t$ and $I_{k-1}$ are deleted in $H^*$,
the number of copies of $B$ in $H^*$ using a fixed edge from $S_t$ and $k-1$ fixed vertices of $V_1$ equals $(k-1)(k-2)(k-1)!+(k-1)(k-1)!=(k-1)^2(k-1)!$.
Following \eqref{equ:count F} we have
$$\frac{\mathcal{N}_1}{t-1}=\bigg(k!\cdot (k-1)^2(k-1)!+(k-1)!\cdot k!k(k-1)\bigg)\cdot {n_1-2 \choose k-1}{n_2 \choose k}{n_3 \choose k},$$
where $t-1=q+\binom{k}{2}$.
Putting everything together, $\mathcal{N}_F(H^*)=\mathcal{N}_1+O_F(q^2n^{3k-2})$ which gives that
\begin{align}\label{eq2-for-counter-example}
\mathcal{N}_F(H^*)=\left(q+{k \choose 2}\right)\cdot (k-1)(2k-1)\cdot (k-1)!k!\cdot {n_1-2 \choose k-1}{n_2 \choose k}{n_3 \choose k}+O_F(q^2n^{3k-2}).
\end{align}
Comparing with \eqref{eq1-for-counter-example} and \eqref{eq2-for-counter-example}, we see that there exists a small constant $b_F>0$ such that
$\mathcal{N}_F(H^*)\leq (1-b_F)\cdot q\cdot c(n,F)$ as long as $\left(q+\binom{k}{2}\right)\cdot(2k-1)<(1-b_F)\cdot q\cdot \frac{5k-2}{2}$ and $q\leq b_F n.$
Solving the inequality, this shows that there exists a small constant $b_F>0$ such that
$$\mathcal{N}_F(H^*)<(1-b_F)\cdot q\cdot c(n,F) \mbox{ whenever } q \mbox{ satisfies that } (k-1)(2k-1)< q\leq  b_F n.$$
In particular, if we take $k=2$ and $F=M_2+P_4$, then $h_F(n,q)\leq \mathcal{N}_F(H^*)< (1-b_F)\cdot q\cdot c(n,F)$ holds for any $4\leq q\leq b_F n$ when $n$ is sufficiently large.
The proof of Theorem~\ref{main-example} is complete.
\QED

\bigskip

\begin{center}
\begin{tikzpicture}[scale = 0.25]
\draw (-46.5,-9)--(-46.5,5);
\draw (-51.5,-9)--(-51.5,5);
\draw (-51.5,-9) arc (180:270:0.5);
\draw (-51.5,5) arc (180:90:0.5);
\draw (-46.5,-9) arc (0:-90:0.5);
\draw (-46.5,5) arc (0:90:0.5);
\draw (-51,5.5)--(-47,5.5);
\draw (-51,-9.5)--(-47,-9.5);

\draw (-43.5,-9)--(-43.5,5);
\draw (-38.5,-9)--(-38.5,5);
\draw (-43.5,-9) arc (180:270:0.5);
\draw (-43.5,5) arc (180:90:0.5);
\draw (-38.5,-9) arc (0:-90:0.5);
\draw (-38.5,5) arc (0:90:0.5);
\draw (-43,5.5)--(-39,5.5);
\draw (-43,-9.5)--(-39,-9.5);
\filldraw[fill=gray!20]  (-46.5,-9) rectangle (-43.5,5);

\draw [line width=0.5pt](-50,3) -- (-48,3);
\draw [line width=0.5pt](-50,2) -- (-48,2);
\filldraw[fill=black] (-50,3) circle(2pt);
\filldraw[fill=black] (-50,2) circle(2pt);
\filldraw[fill=black] (-48,3) circle(2pt);
\filldraw[fill=black] (-48,2) circle(2pt);
\filldraw[fill=black] (-49,0) circle(2pt);
\filldraw[fill=black] (-49,-1) circle(2pt);
\filldraw[fill=black] (-49,-2) circle(2pt);
\draw [line width=0.5pt](-50,-5) -- (-48,-5);
\draw [line width=0.5pt](-50,-6) -- (-48,-6);
\filldraw[fill=black] (-50,-5) circle(2pt);
\filldraw[fill=black] (-50,-6) circle(2pt);
\filldraw[fill=black] (-48,-5) circle(2pt);
\filldraw[fill=black] (-48,-6) circle(2pt);

\draw [line width=0.5pt](-42,3) -- (-40,3);
\draw [line width=0.5pt](-42,2) -- (-40,2);
\filldraw[fill=black] (-42,3) circle(2pt);
\filldraw[fill=black] (-42,2) circle(2pt);
\filldraw[fill=black] (-40,3) circle(2pt);
\filldraw[fill=black] (-40,2) circle(2pt);
\filldraw[fill=black] (-41,0) circle(2pt);
\filldraw[fill=black] (-41,-1) circle(2pt);
\filldraw[fill=black] (-41,-2) circle(2pt);
\draw [line width=0.5pt](-42,-5) -- (-40,-5);
\draw [line width=0.5pt](-42,-6) -- (-40,-6);
\filldraw[fill=black] (-42,-5) circle(2pt);
\filldraw[fill=black] (-42,-6) circle(2pt);
\filldraw[fill=black] (-40,-5) circle(2pt);
\filldraw[fill=black] (-40,-6) circle(2pt);
\draw [line width=0.5pt](-42,3) -- (-42,2);
\draw node at (-45,-12){(a). For Theorem~\ref{main-example}};

\draw (-26.5,-9)--(-26.5,5);
\draw (-31.5,-9)--(-31.5,5);
\draw (-31.5,-9) arc (180:270:0.5);
\draw (-31.5,5) arc (180:90:0.5);
\draw (-26.5,-9) arc (0:-90:0.5);
\draw (-26.5,5) arc (0:90:0.5);
\draw (-31,5.5)--(-27,5.5);
\draw (-31,-9.5)--(-27,-9.5);

\draw (-23.5,-9)--(-23.5,5);
\draw (-18.5,-9)--(-18.5,5);
\draw (-23.5,-9) arc (180:270:0.5);
\draw (-23.5,5) arc (180:90:0.5);
\draw (-18.5,-9) arc (0:-90:0.5);
\draw (-18.5,5) arc (0:90:0.5);
\draw (-23,5.5)--(-19,5.5);
\draw (-23,-9.5)--(-19,-9.5);
\filldraw[fill=gray!20]  (-26.5,-9) rectangle (-23.5,5);

\draw [line width=0.5pt](-30,3) -- (-28,3);
\draw [line width=0.5pt](-30,2) -- (-28,2);
\filldraw[fill=black] (-30,3) circle(2pt);
\filldraw[fill=black] (-30,2) circle(2pt);
\filldraw[fill=black] (-28,3) circle(2pt);
\filldraw[fill=black] (-28,2) circle(2pt);
\filldraw[fill=black] (-29,0) circle(2pt);
\filldraw[fill=black] (-29,-1) circle(2pt);
\filldraw[fill=black] (-29,-2) circle(2pt);
\draw [line width=0.5pt](-30,-5) -- (-28,-5);
\draw [line width=0.5pt](-30,-6) -- (-28,-6);
\filldraw[fill=black] (-30,-5) circle(2pt);
\filldraw[fill=black] (-30,-6) circle(2pt);
\filldraw[fill=black] (-28,-5) circle(2pt);
\filldraw[fill=black] (-28,-6) circle(2pt);

\draw [line width=0.5pt](-22,3) -- (-20,3.3);
\draw [line width=0.5pt](-22,3) -- (-20,2.7);
\draw [line width=0.5pt](-22,3) -- (-20,2);
\draw [line width=0.5pt](-22,3) -- (-20,4);
\filldraw[fill=black] (-22,3) circle(2pt);
\filldraw[fill=black] (-20,4) circle(2pt);
\filldraw[fill=black] (-20,3.3) circle(2pt);
\filldraw[fill=black] (-20,2.7) circle(2pt);
\filldraw[fill=black] (-20,2) circle(2pt);
\draw [line width=0.5pt](-22,0) -- (-20,1);
\draw [line width=0.5pt](-22,0) -- (-20,0.3);
\draw [line width=0.5pt](-22,0) -- (-20,-0.3);
\draw [line width=0.5pt](-22,0) -- (-20,-1);
\filldraw[fill=black] (-22,0) circle(2pt);
\filldraw[fill=black] (-20,1) circle(2pt);
\filldraw[fill=black] (-20,0.3) circle(2pt);
\filldraw[fill=black] (-20,-0.3) circle(2pt);
\filldraw[fill=black] (-20,-1) circle(2pt);
\filldraw[fill=black] (-21,-3.5) circle(2pt);
\filldraw[fill=black] (-21,-1.5) circle(2pt);
\filldraw[fill=black] (-21,-2.5) circle(2pt);
\draw [line width=0.5pt](-22,-5) -- (-20,-5.5);
\draw [line width=0.5pt](-22,-5) -- (-20,-6.5);
\draw [line width=0.5pt](-22,-5) -- (-20,-4.5);
\draw [line width=0.5pt](-22,-5) -- (-20,-3.5);
\filldraw[fill=black] (-22,-5) circle(2pt);
\filldraw[fill=black] (-20,-4.5) circle(2pt);
\filldraw[fill=black] (-20,-5.5) circle(2pt);
\filldraw[fill=black] (-20,-6.5) circle(2pt);
\filldraw[fill=black] (-20,-3.5) circle(2pt);
\draw [line width=0.5pt](-22,3) -- (-22,0);

\draw node at (-25,-12){(b). For Theorem~\ref{Thm:hVSt}};
\end{tikzpicture}

\medskip

Figure 1. Examples for color-$k$-critical graphs

\end{center}

\section{Properties on supersaturated graphs}\label{sec_properties-of-H}
In the rest of this paper, let $F$ be a color-$k$-critical graph on $f$ vertices with $\chi(F)=r+1$ where $r\geq 2$.
This section aims to establish some quantitative and structural properties for graphs with the minimum number of copies of $F$ subject to given numbers of vertices and edges.

\subsection{Basic properties}
We first present some lemmas on the minimum number of copies of $F$ obtained from some well-characterized graphs by adding few edges,
which generalize similar lemmas proved in \cite{mubayi2010, pikhurko2017}.

Recall Theorem~\ref{extremal graph for mk-critical graph} that for sufficiently large $n$,
$H(n,r,k)$ is the unique $n$-vertex extremal graph for $F$.
The coming two lemmas concerns quantitative properties of $c(n,F)$,
which, in this case, denotes the minimum number of copies of $F$ obtained from $H(n,r,k)$ by adding one new edge.

Let $n_1,\ldots, n_r$ be positive integers satisfying $\sum_{i=1}^r n_i=n-k+1$ and
let $H(n_1,\ldots, n_r)$ be the graph obtained from $K_{k-1}+K(V_1,\ldots,V_r)$ by adding a new edge $xy$ into $V_1$ where each $|V_i|=n_i$.
Let $c(n_1,\ldots,n_r;F)$ be the number of copies of $F$ contained in $H(n_1,\ldots, n_t)$.

\begin{lemma}\label{counting F in H(n,p,k) with adding an edge}
There are positive constants $\alpha_F,\beta_F$ such that if $n$ is sufficiently large, then
$$|c(n,F)-\alpha_F n^{f-k-1}|<\beta_F n ^{f-k-2}.$$
In particular, $\frac{1}{2}\alpha_F n^{f-k-1} <c(n,F)<2\alpha_F n^{f-k-1}.$
\end{lemma}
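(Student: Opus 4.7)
The plan is to derive the leading-order asymptotics of $c(n,F)$ via direct enumeration, using color-$k$-criticality both to identify which embeddings are possible and to pin down the relevant configurations.

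First, every non-edge of $H(n,r,k) = K_{k-1} + T_r(n-k+1)$ lies strictly inside one of the Tur\'an parts $V_1,\ldots,V_r$, so up to relabeling, adding an edge yields a graph $H(n_1,\ldots,n_r)$ with the new edge $e = xy$ inside $V_1$, where $n_1+\cdots+n_r = n - k + 1$. Hence $c(n,F) = \min c(n_1,\ldots,n_r; F)$ over such compositions.

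Next I would count embeddings $\phi\colon V(F)\hookrightarrow V(H(n_1,\ldots,n_r))$. Setting $X = \phi^{-1}(V(K_{k-1}))$ and $W_i = \phi^{-1}(V_i)$, the fact that $H(n,r,k)$ is $F$-free (Theorem~\ref{extremal graph for mk-critical graph}) forces exactly one edge $e' = \{u,v\}$ of $F$ to satisfy $\phi(e') = e$, with every $W_i$ for $i \geq 2$ independent in $F$ and $W_1$ independent except for $e'$. I claim $|X| = k - 1$ in any such valid configuration. Indeed, assigning the vertices of $W_i$ the color $i$ defines a proper $r$-coloring $c$ of $F - X - e'$. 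We must have $c(u) = c(v)$, since otherwise $c$ would be a proper $r$-coloring of $F - X$ with $|X| \leq k - 1$, contradicting color-$k$-criticality. Restricting $c$ to $F - X - \{u\}$ then gives a proper $r$-coloring (the only improperly colored edge $e'$ is gone), so color-$k$-criticality forces $|X| + 1 \geq k$. Valid configurations with $|X| = k - 1$ do exist: taking a matching $M = \{e_1,\ldots,e_k\}$ whose removal makes $F$ an $r$-colorable graph and setting $e' = e_1$ and $X$ to be one endpoint of each $e_i$ for $i \geq 2$, we have $F - X - e' \subseteq F - M$ is $r$-colorable.

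For each valid $(e', X)$ and each proper $r$-coloring of $F - X - e'$ with classes $W_1 \supseteq \{u,v\}, W_2, \ldots, W_r$, the number of associated injective embeddings of $F$ into $H(n_1,\ldots,n_r)$ equals
$$2 \cdot (k-1)! \cdot (n_1-2)(n_1-3)\cdots(n_1-|W_1|+1) \cdot \prod_{i=2}^r n_i(n_i-1)\cdots(n_i - |W_i| + 1),$$
whose leading monomial has total degree $(|W_1|-2) + \sum_{i\geq 2}|W_i| = f - k - 1$. Summing over valid configurations and colorings shows that $c(n_1,\ldots,n_r; F)$ is a polynomial in $n_1, \ldots, n_r$ whose leading homogeneous part has degree $f - k - 1$ and strictly positive coefficient, the positivity coming from the explicit valid configuration above.

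Finally, optimizing this polynomial over integer compositions $n_1 + \cdots + n_r = n - k + 1$, each coordinate at the optimum satisfies $n_i = n/r + O(1)$, so substituting yields $c(n,F) = \alpha_F n^{f-k-1} + O(n^{f-k-2})$ for a well-defined positive constant $\alpha_F$, which directly gives both the approximation bound and the sandwich $\tfrac{1}{2}\alpha_F n^{f-k-1} < c(n,F) < 2\alpha_F n^{f-k-1}$ for sufficiently large $n$. The main obstacle in this plan is the structural step pinning $|X| = k - 1$, where both clauses of Definition~\ref{Def:k-critical} are used in full strength; the remainder is routine enumeration and elementary optimization.
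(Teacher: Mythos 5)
Your proof follows essentially the same route as the paper: enumerate edge-preserving injections of $F$ into $H(n_1,\ldots,n_r)$ via critical-$k$-tuples (one edge of $F$ mapped to the added edge, together with $k-1$ vertices mapped onto the clique $K_{k-1}$) and proper $r$-colorings of the remainder, yielding a polynomial of degree $f-k-1$ in the part sizes with positive leading coefficient, and then substitute the near-balanced Tur\'an sizes $n_i=n/r+O(1)$. You are actually slightly more careful than the paper in proving the converse structural step — that any valid embedding is forced to place exactly $k-1$ vertices into the clique (by deleting the endpoint $u$ and invoking clause (ii) of color-$k$-criticality) — while in turn omitting the constant factor $1/\mathrm{Aut}(F)$ that converts injections into copies and speaking loosely of ``optimizing over compositions'' when the Tur\'an part sizes are in fact fixed; neither affects the asymptotics, so the argument is sound.
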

\begin{proof}
Let $n_1\leq \ldots\leq n_r\leq n_1+1$ be integers satisfying $\sum_{i=1}^r n_i=n-k+1$. Then we have
$$
c(n,F)=\min\{c(n_1,\ldots,n_r;F),c(n_r,\ldots,n_1;F)\}.
$$
Since $F$ is color-$k$-critical, there exist $k-1$ vertices $x_1,\ldots,x_{k-1}$ and an edge $uv$ such that $F-\{x_1,\ldots,x_{k-1}, uv\}$ has a proper $r$-coloring $c$.
We call $\{x_1,\ldots,x_{k-1},uv\}$ a {\it critical-$k$-tuple} of $F$.
Recall the definition of $H(n_1,\ldots, n_r)$ and the edge $xy$ in $H(n_1,\ldots, n_r)$.
Then an edge preserving injection of $F$ into $H(n_1,\ldots, n_r)$ is obtained by choosing a critical-$k$-tuple $\{x_1,\ldots,x_k,uv\}$ of $F$,
mapping $x_1,\ldots,x_k$ to the vertices of the $K_{k-1}$ of $H(n_1,\ldots, n_r)$, mapping $uv$ of $F$ to $xy$ of $H(n_1,\ldots, n_r)$, and then mapping the remaining vertices of $F$ properly.
This mapping corresponds to a proper coloring $c$ of $F-\{x_1,\ldots,x_{k-1}, uv\}$.
Let $x_c^i$ be the number of vertices of $F-\{x_1,\ldots,x_{k-1}, uv\}$ after excluding $u,v$ that receive color $i$ under $c$.
Let $Aut(F)$ denote the number of automorphisms of $F$.
Let $\mathcal{X}$ be the set of all critical-$k$-tuples of $F$ and $\mathcal{Y}(X)$ be the set of all proper colorings of $F-X$ for any $X\in \mathcal{X}$.
Hence, we obtain
\begin{equation}\label{number of F in h(n,p,k)}
c(n_1,\ldots,n_r;F)=\frac{1}{Aut(F)}\sum_{X\in \mathcal{X}}\sum_{c\in \mathcal{Y}(X)}(k-1)!2(n_1-2)_{x_{c}^1}\prod^r_{i=2}(n_i)_{x_{c}^i},
\end{equation}
where $(n)_k =n!/(n-k)!$. Since each $n_i$ satisfies $|n_i-\frac{n-k+1}{r}|\leq 1$,
we see that $c(n_1,\ldots,n_r;F)$ is a polynomial in $n$ of degree $f-k-1$, so is $c(n,F)$.
The lemma follows.
\end{proof}

\begin{lemma}\label{estimate c(n,f)}
There exist constants $\theta_F$ and $\eta_F$ such that the following holds for sufficiently large $n$.
Let $\sum_{i=1}^rn_i=\sum_{i=1}^{r}n^\prime_i=n-k+1$ and $c(n,F)=c(n^\prime_1,\ldots,n^\prime_r;F)$.
Let $a_i=n_i-n^\prime_i$ for each $i\in [r]$ and $A=\max\{|a_i|: i\in [r]\}$. Then
$|c(n_1,\ldots,n_r;F)-c(n,F)-\theta_F a_1 n^{f-k-2}|\leq \eta_F A^2 n^{f-k-3}.$
\end{lemma}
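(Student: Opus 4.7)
My plan is to view $c(n_1,\ldots,n_r;F)=:P(n_1,\ldots,n_r)$ as a polynomial of total degree $f-k-1$ via the explicit formula \eqref{number of F in h(n,p,k)} from the proof of Lemma~\ref{counting F in H(n,p,k) with adding an edge}, and then Taylor-expand $P$ around the optimizer $(n_1',\ldots,n_r')$. The crucial structural observation is that $P$ is \emph{symmetric in $n_2,\ldots,n_r$}: only $V_1$ plays the asymmetric role of hosting the new edge, so the two vertices $u,v$ of the chosen critical tuple always receive color $1$ in any $c\in\mathcal{Y}(X)$, while the remaining color labels $\{2,\ldots,r\}$ can be freely permuted, giving a bijection on $\mathcal{Y}(X)$ that simultaneously exchanges the variables $n_2,\ldots,n_r$ inside each summand.

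First I would write the exact Taylor expansion
\[
 P(n_1'+a_1,\ldots,n_r'+a_r)=P(n_1',\ldots,n_r')+\sum_{i=1}^r a_i D_i + R,
\]
with $D_i:=\partial_i P(n_1',\ldots,n_r')$. Because $P$ has total degree $f-k-1$ and each $n_i'=\Theta(n)$, any $\ell$-th order partial of $P$ at $(n_1',\ldots,n_r')$ is $O_F(n^{f-k-1-\ell})$; bundling the second and higher-order contributions gives $|R|\leq \eta_1 A^2 n^{f-k-3}$ for some $\eta_1=\eta_1(F)$, assuming (say) $A\leq n$.

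I would then exploit the symmetry of $P$ in $n_2,\ldots,n_r$ together with the identity $\sum_i a_i=0$ (forced by $\sum n_i=\sum n_i'=n-k+1$). For $i,j\geq 2$, the polynomial $\partial_i P-\partial_j P$ is antisymmetric in $(n_i,n_j)$, hence divisible by $(n_i-n_j)$; since the optimizer $(n_1',\ldots,n_r')$ is nearly balanced with $|n_i'-n_j'|\leq 1$, this yields $|D_i-D_2|=O_F(n^{f-k-3})$ for every $i\geq 2$. Using $\sum_{i\geq 2}a_i=-a_1$,
\begin{align*}
 \sum_{i=1}^r a_i D_i
 &= a_1 D_1 + D_2\sum_{i\geq 2}a_i + \sum_{i\geq 2}a_i(D_i-D_2) \\
 &= a_1(D_1-D_2) + O_F(A\, n^{f-k-3}).
\end{align*}

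Finally, $D_1-D_2$ is itself a polynomial in $(n_1',\ldots,n_r')$ of degree $f-k-2$; substituting $n_i'=n/r+O_F(1)$ yields $D_1-D_2=\theta_F n^{f-k-2}+O_F(n^{f-k-3})$ for some constant $\theta_F=\theta_F(F)$. Multiplying by $a_1$ (with $|a_1|\leq A$) and packaging all error terms into a single bound $\eta_F A^2 n^{f-k-3}$ (absorbing $An^{f-k-3}$ into $A^2 n^{f-k-3}$ whenever $A\geq 1$; the case $A=0$ is trivial) finishes the argument. The only real subtlety is the symmetry step --- verifying the invariance of \eqref{number of F in h(n,p,k)} under the $S_{r-1}$-action on $(n_2,\ldots,n_r)$ and tracking that the unit-size imbalances among the $n_i'$ contribute only lower-order errors --- after which the rest is routine polynomial manipulation.
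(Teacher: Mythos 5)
Your proof is correct and follows essentially the same route as the paper: Taylor-expand $c(n_1,\ldots,n_r;F)$ about the optimizer $(n'_1,\ldots,n'_r)$, bound the quadratic-and-higher remainder by $O(A^2 n^{f-k-3})$, use $\sum_i a_i = 0$ together with the symmetry of the polynomial in $n_2,\ldots,n_r$ to reduce the linear term to $a_1(D_1-D_2)+O(An^{f-k-3})$, and finally read off $\theta_F$ as the leading coefficient of $D_1-D_2$ at the balanced point. The only stylistic difference is that the paper first replaces the evaluation point of all partials by the balanced point $\left(\tfrac{n-k+1}{r},\ldots,\tfrac{n-k+1}{r}\right)$ (where the partials in directions $2,\ldots,r$ coincide exactly), whereas you keep the actual point $(n'_1,\ldots,n'_r)$ and instead argue via the antisymmetry-and-divisibility trick that $|D_i-D_j|=O(n^{f-k-3})$ for $i,j\ge 2$; both incur the same lower-order error and absorb into the $\eta_F A^2 n^{f-k-3}$ bound when $A\ge 1$, so the arguments are interchangeable.
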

\begin{proof}
Note that we have $|n_i^\prime-\frac{n-k+1}{r}|\leq 1$ for each $i\in [r]$.
The assertion holds trivially for $A=0$, hence we can assume that $A\geq 1$.
By the Taylor expansion about ($n_1^\prime,\ldots,n^\prime_r$),
\begin{equation}\label{taylor expansion}
c(n_1^\prime+a_1,\ldots,n_r^\prime+a_r;F)-c(n_1^\prime,\ldots,n_r^\prime;F)-\sum^{r}_{j=1}a_j\frac{\partial c}{\partial_j}(n_1^\prime,\ldots,n^\prime_r)
\end{equation}
is a polynomial of degree at most $f-k-3$ with variables $n^\prime_i$ in which every monomial contains at least two $a_i$'s,
thus this is $O(A^2 n^{f-k-3})$. Furthermore, since $\frac{\partial c}{\partial_i}(n_1^\prime,\ldots,n_r^\prime)$ is a polynomial of degree $f-k-2$
and $|n_i^\prime-\frac{n-k+1}{r}|\leq 1$ for each $i\in[r]$, we have
$$
\left|\frac{\partial c}{\partial_i}(n_1^\prime,\ldots,n_r^\prime)-\frac{\partial c}{\partial_i}\left(\frac{n-k+1}{r},\ldots,\frac{n-k+1}{r}\right)\right|=O(n^{f-k-3}).
$$
Thus (\ref{taylor expansion}) remains within $O(A^2 n^{f-k-3})$ if we replace the term $\sum^{r}_{j=1}a_j\frac{\partial c}{\partial_j}(n_1^\prime,\ldots,n^\prime_r)$ in (\ref{taylor expansion}) by
$$
\sum_{j=1}^{r}a_j\frac{\partial c}{\partial_j}\left(\frac{n-k+1}{r},\ldots,\frac{n-k+1}{r}\right)$$
$$=a_1\left(\frac{\partial c}{\partial_1}\left(\frac{n-k+1}{r},\ldots,\frac{n-k+1}{r}\right)-\frac{\partial c}{\partial_2}\left(\frac{n-k+1}{r},\ldots,\frac{n-k+1}{r}\right)\right)
$$
where we used the facts that $\sum_{i=1}^{r}a_i=0$ and, by symmetry, all partial derivatives for $j\in \{2,\ldots,r\}$ are equal to each other (this fact can be seen from \eqref{number of F in h(n,p,k)}).
Now if we let $\theta_F$ be the coefficient of $n^{f-k-2}$ in $\frac{\partial c}{\partial_1}(\frac{n-k+1}{r},\ldots,\frac{n-k+1}{r})-\frac{\partial c}{\partial_2}(\frac{n-k+1}{r},\ldots,\frac{n-k+1}{r})$,
then the lemma follows.
\end{proof}

Let $d(n,F)$ be the minimum number of copies of $F$ in the graph obtained from $T_r(n)$ by adding a copy of $M_k$ to one partite set of $T_r(n)$.
By a proof similar to that of Lemma~\ref{counting F in H(n,p,k) with adding an edge},
we can show the following lemma and in particular, $d(n,F)$ is a polynomial in $n$ of degree $f-2k$.

\begin{lemma}\label{counting F in Tp(n) with adding a matching}
 There are positive constants $\alpha^\prime_F,\beta^\prime_F$ such that if $n$ is sufficiently large, then
$$|d(n,F)-\alpha^\prime_F n^{f-2k}|<\beta^\prime_F n ^{f-2k-1}.$$
In particular, $\frac{1}{2}\alpha^\prime_F n^{f-2k} <d(n,F)<2\alpha^\prime_F n^{f-2k}.$
\end{lemma}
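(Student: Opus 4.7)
The plan is to mirror the proof of Lemma~\ref{counting F in H(n,p,k) with adding an edge}, with critical-$k$-tuples replaced by \emph{critical matchings} of $F$: a critical matching is a set $M$ of $k$ edges forming a matching in $F$ with $\chi(F - M) \le r$. Such matchings exist by condition (i) in the definition of color-$k$-critical graphs. Let $\mathcal{M}$ denote the collection of all critical matchings of $F$, playing the role analogous to the set $\mathcal{X}$ in the earlier proof.

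The key structural input is the following observation: every copy of $F$ in the graph $T_r(n) + M_k$ must use all $k$ edges of the added matching. Suppose for contradiction that some embedding uses only $j < k$ of these edges. Then the only edges of $F$ mapped inside a partite set are the $j$ edges mapped to $M_k$; deleting these $j$ edges from $F$ yields a graph that embeds in the complete $r$-partite graph $T_r(n)$, hence has chromatic number at most $r$. Choosing one endpoint from each of these $j$ edges produces a set $S \subseteq V(F)$ with $|S| \le j \le k - 1$, and $F - S$ is a subgraph of the $r$-colorable graph just described, so $\chi(F - S) \le r < \chi(F)$. This contradicts condition (ii) in the definition of color-$k$-critical graphs.

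Granting the structural observation, one can count embeddings exactly as in Lemma~\ref{counting F in H(n,p,k) with adding an edge}. Let $n_1, \ldots, n_r$ denote the sizes of the partite sets of $T_r(n)$, with $V_1$ hosting $M_k$, and let $\mathcal{Y}(M)$ denote the set of proper $r$-colorings of $F - M$ in which both endpoints of every edge of $M$ receive color $1$. Writing $y_c^i$ for the number of vertices of $V(F) \setminus V(M)$ colored $i$ under $c \in \mathcal{Y}(M)$, the number of copies of $F$ in the graph is
\[
D(n_1, \ldots, n_r; F) = \frac{1}{\Aut(F)} \sum_{M \in \mathcal{M}} \sum_{c \in \mathcal{Y}(M)} 2^k\, k!\, (n_1 - 2k)_{y_c^1} \prod_{i=2}^{r}(n_i)_{y_c^i}.
\]
Since $\sum_{i=1}^{r} y_c^i = f - 2k$ for every $c$, this is a polynomial of degree $f - 2k$ in $n$, and its leading coefficient is strictly positive because $\mathcal{M}$ and each $\mathcal{Y}(M)$ are non-empty.

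Finally, $d(n, F)$ is obtained by minimizing $D(n_1, \ldots, n_r; F)$ over balanced partitions $\lfloor n/r\rfloor \le n_i \le \lceil n/r\rceil$ and over the choice of which partite set hosts $M_k$. This minimum is again a polynomial of degree $f - 2k$ with positive leading coefficient $\alpha'_F$, and the quantitative estimate $|d(n, F) - \alpha'_F n^{f-2k}| < \beta'_F n^{f-2k-1}$ follows by standard comparison of $(n_i)_y$ with $(n/r)^y$ for large $n$. The only genuinely new ingredient compared to Lemma~\ref{counting F in H(n,p,k) with adding an edge} is the structural observation ensuring that all $k$ matching edges are used; this is the step I expect to be the main obstacle, and once it is established the rest is a straightforward transcription of the earlier argument.
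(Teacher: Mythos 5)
Your proof follows the approach the paper has in mind (the paper only remarks that the lemma follows ``by a proof similar to that of Lemma~\ref{counting F in H(n,p,k) with adding an edge}''), and the main new ingredient --- the observation that every copy of $F$ in $T_r(n)+M_k$ must use all $k$ added matching edges --- is correctly proved: an embedding using only $j<k$ of them would yield, by picking one endpoint from each used edge, a set of at most $k-1$ vertices whose deletion makes $F$ $r$-colorable, contradicting condition (ii) of Definition~\ref{Def:k-critical}. The resulting counting formula and the degree count are the right ones.

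The one step that is asserted rather than argued is the positivity of the leading coefficient, and the reason you give for it is in fact false as stated. You claim that ``each $\mathcal{Y}(M)$ [is] non-empty,'' i.e.\ that every critical matching $M$ admits a proper $r$-coloring of $F-M$ placing all of $V(M)$ in a single color class. Counterexample: take $F=C_8+\{13,24\}$ (the $8$-cycle on $[8]$ together with the chords $13$ and $24$); one checks $F$ is color-$2$-critical with $\chi(F)=3$, and for the critical matching $M=\{13,24\}$ the graph $F-M=C_8$ is connected bipartite with unique bipartition $\{1,3,5,7\}\cup\{2,4,6,8\}$, which splits $V(M)=\{1,2,3,4\}$ across the two classes, so $\mathcal{Y}(M)=\emptyset$. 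What $\alpha'_F>0$ really requires is the weaker claim that \emph{some} critical matching has $\mathcal{Y}(M)\neq\emptyset$, equivalently that $T_r(n)+M_k$ contains a copy of $F$ at all. This does hold for the $F$ above (take $M=\{23,56\}$, for which $F-M$ has bipartition $\{1,4,7\}\cup\{2,3,5,6,8\}$), and it ought to hold for every color-$k$-critical $F$, but it is not immediate from the definition: $T_r(n)+M_k$ has only $t_r(n)+k$ edges, which is far below $\ex(n,F)\approx t_r(n)+(k-1)n/r$, so one cannot appeal to extremality of $H(n,r,k)$. Since $\alpha'_F>0$ is used essentially in Lemma~\ref{couting F in near complete graph within a mathing} and in Claim~\rom{1}, this deserves a short argument of its own; the paper omits it too, so this is a shared gap, but the particular justification you offer does not survive scrutiny.
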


We say an $n$-vertex $r$-partite graph $G$ with a partition $V(G)=\bigcup_{i=1}^{r}V_i$ is {\it $\delta$-equivalence}
 if $|V_1|=\ldots=|V_r|$ and each vertex is adjacent to at least $(1-\delta)n/r$ vertices in each of other partite sets.

\begin{lemma}\label{couting F in near complete graph within a mathing}
Let $0\leq\delta\ll1$. Let $G^\prime$ be the graph obtained from an $n$-vertex $\delta$-equivalence $r$-partite graph $G$ by adding a copy of $M_k$ into one partite set of $G$.
Then there is a positive constant $\gamma$ depending on $F$ and $\delta$ such that
$\mathcal{N}_F(G^\prime)\geq d(n,F)-\gamma n^{f-2k}$, where $\gamma\to 0$ as $\delta\to 0$.
\end{lemma}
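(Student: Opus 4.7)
The plan is to exploit the color-$k$-critical structure of $F$ to show that every copy of $F$ in $G'$ must use \emph{all} $k$ edges of the added matching $M_k$, and then to bound the deficit of $\mathcal{N}_F(G')$ relative to the count in the benchmark graph $T_r(n)+M_k$ by an $O_F(\delta)\,n^{f-2k}$ term arising from the missing pairs of $G$ across distinct parts. The first step rests on the auxiliary claim that removing any $k-1$ edges from $F$ cannot reduce $\chi(F)$: in a hypothetical proper $r$-coloring of $F$ minus $k-1$ edges, choosing one endpoint from each removed edge that is monochromatic in this coloring yields at most $k-1$ vertices whose deletion makes $F$ itself $r$-colorable, contradicting condition~(ii) of Definition~\ref{Def:k-critical}. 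Since $G$ is $r$-partite and hence $F$-free, a copy of $F$ in $G'$ using at most $k-1$ edges of $M_k$ would realize $F$ minus at most $k-1$ edges inside $G$, a contradiction; so every copy must use all of $M_k$.

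With this in hand, each copy of $F$ in $G'$ is parameterized by a triple $(M,c,\phi)$, where $M$ is a matching of $k$ edges in $F$ whose $2k$ endpoints form an independent set of $F-M$ (forced because the part $V_1$ of $G$ that hosts $M_k$ contains no other edges), $c$ is a proper $r$-coloring of $F-M$ in which every endpoint of $M$ receives the colour assigned to $V_1$, and $\phi\colon V(F)\to V(G')$ is a colour-respecting injection that sends $M$ bijectively onto $M_k$ and preserves every edge of $F$. The very same parameterization enumerates copies of $F$ in the benchmark graph $T_r(n)+M_k$; by the polynomial nature of the count (Lemma~\ref{counting F in Tp(n) with adding a matching}) the number of copies there is $d(n,F)$ up to an $O_F(n^{f-2k-1})$ correction coming from the slight imbalance of $T_r(n)$'s parts relative to the equal parts of $G$.

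For a fixed critical pair $(M,c)$ I would lower-bound the number of valid $\phi$ in $G'$ by subtracting a bound on the \emph{bad} colour-respecting injections that fail to be embeddings. The $\delta$-equivalence hypothesis yields at most $O(\delta n^2)$ missing pairs of $G$ between distinct parts; each bad $\phi$ maps some edge of $F-M$ onto such a missing pair, and once this pair and the specific edge of $F-M$ are fixed the number of completions is $O_F(n^{f-2k-2})$. Summing over missing pairs, edges of $F-M$, and critical pairs $(M,c)$ (the latter contributing only an $F$-dependent constant) gives a deficit of at most $O_F(\delta)\,n^{f-2k}$ between $\mathcal{N}_F(G')$ and $\mathcal{N}_F(T_r(n)+M_k)$, whence $\mathcal{N}_F(G')\geq d(n,F)-\gamma n^{f-2k}$ with $\gamma=\gamma(F,\delta)\to 0$ as $\delta\to 0$. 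The main obstacle, I expect, is the clean bookkeeping needed to align the $(M,c)$-parameterizations in $G'$ and in $T_r(n)+M_k$ and to verify that the small imbalance between the parts of $T_r(n)$ and those of $G$ contributes only at the $O_F(n^{f-2k-1})$ level, which can then be absorbed into $\gamma n^{f-2k}$ for sufficiently large $n$.
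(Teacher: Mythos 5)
Your plan is correct in substance but departs from the paper's argument in how it absorbs the error coming from missing cross-part pairs. The paper counts valid embeddings directly by a greedy vertex-by-vertex placement: once a critical matching $M$ of $F$ is pinned to $M_k$, each remaining vertex of $F$ has at least $(1-f\delta)n/r$ admissible images in its target part by the $\delta$-equivalence hypothesis, so the total count is at least $d\big((1-f\delta)n, F\big)$, which equals $d(n,F)-\gamma n^{f-2k}$ by polynomiality of $d(\cdot,F)$. You instead count all colour-respecting injections --- which in the complete benchmark $T_r(n)+M_k$ are precisely the embeddings --- and subtract those mapping some edge of $F-M$ onto a missing pair of $G$. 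Both routes give the same conclusion; the paper's multiplicative estimate is marginally cleaner since it avoids the union-bound bookkeeping, while yours makes the source of the error term (the $O(\delta n^2)$ missing pairs) explicit and is closer in spirit to the removal-lemma-style accounting used elsewhere in the paper. Your preliminary observation that every copy of $F$ in $G'$ must use all of $M_k$ (derived via condition~(ii) of Definition~\ref{Def:k-critical}) is correct but not needed for the lower bound: the paper simply counts the copies that do use all of $M_k$.

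There is one small but genuine imprecision in the subtraction step. The assertion that once a missing pair and an edge of $F-M$ are fixed the number of completions of $\phi$ is $O_F(n^{f-2k-2})$ fails when the chosen edge of $F-M$ has an endpoint in $V(M)$: fixing the missing pair then pins only one extra ``free'' vertex (the other endpoint of the pair lands on one of the $2k$ vertices already constrained to go onto $V(M_k)$), leaving a completion count of $O_F(n^{f-2k-1})$, which multiplied by $O(\delta n^2)$ would overshoot the target. The estimate is nonetheless saved by splitting the sum: a missing pair with an endpoint in $V(M_k)$ involves one of only $2k$ specific vertices, each with at most $\delta n/r$ non-neighbours in each other part, so there are only $O(\delta n)$ such pairs, and $O(\delta n)\cdot O_F(n^{f-2k-1})=O_F(\delta)\,n^{f-2k}$, matching the $O(\delta n^2)\cdot O_F(n^{f-2k-2})$ contribution from generic missing pairs. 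You should make this case split explicit to render the bookkeeping airtight.
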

\begin{proof}
Without loss of generality, let $u^\prime_1v^\prime_1,\ldots,u^\prime_kv^\prime_k$ denote the $k$-matching added in $G[V_1]$.
Since $F$ is color-$k$-critical, there exist $k$ edges $u_1v_1,\ldots,u_kv_k$ (call it a {\it critical-matching} of $F$) such that after deleting them, the resulting graph has a proper $r$-coloring $c$.
Let $t_c^i$ be the number of vertices of $F-\{u_1v_1,\ldots,u_kv_k\}$ that receive color $i$ under $c$.
Let $\mathcal{X}$ be the set of all critical-matchings of $F$ and $\mathcal{Y}(M)$ be the set of all proper $r$-colorings of $F-M$ for any $M\in \mathcal{X}$.
Since $d_{V_{j}}(x)\geq (1-\delta)|V_j|$ for each $x\in V_i$ and $j\neq i$,
\begin{align*}
\mathcal{N}_F(G^\prime)\geq &\frac{1}{Aut(F)}\sum_{M\in \mathcal{X}}\sum_{c\in \mathcal{Y}(M)}2^kk!(n/r-2k)_{t_c^1}\prod^r_{i=2}((1-f\delta)n/r)_{t_c^i},\\
\geq&\frac{1}{Aut(F)}\sum_{M\in \mathcal{X}}\sum_{c\in \mathcal{Y}(M)}2^kk!\left(\frac{n-f\delta n}{r}-2k\right)_{t_c^1}\prod^r_{i=2}\left(\frac{n-f\delta n}{r}\right)_{t_c^i}\\
\geq &d(n-f\delta n,F)\geq d(n,F)-\gamma n^{f-2k},
\end{align*}
where the last inequality holds because $n$ is sufficiently large and $d(n,F)$ is a polynomial in $n$ of degree $f-2k$.
Here, $\gamma\to 0$ as $\delta\to 0$. The proof of Lemma~\ref{couting F in near complete graph within a mathing} is complete.
\end{proof}

\subsection{Refined properties}\label{subsec:refined}
Let $F$ be a color-$k$-critical graph on $f$ vertices with $\chi(F)=r+1$ where $r\geq 2$.
Throughout this subsection, we use the following constants satisfying the given hierarchy:
$$ 1\gg\epsilon\gg\epsilon_{14}\gg  \epsilon_{13}\gg\ldots\gg\epsilon_2 \gg\epsilon_1\gg\delta \gg\frac{1}{n}.$$
Let $1\leq q\leq \delta n$ and $H$ be an $n$-vertex graph with ex$(n,F)+q$ edges and minimum number of copies of $F$.
We will show some refined properties on $H$, which are important in the coming sections.

Let $H(n,r,k,q)$ be the graph obtained from $H(n,r,k)$ by adding a copy of $S_{q+1}$ into one part of $H(n,r,k)$ such that the number of copies of $F$ using exactly one edge from $S_{q+1}$ is $c(n,F)$.
It is clear (by considering the definition of color-$k$-critical) that any copy of $F$ in $H(n,r,k,q)$ must use the center of the $S_{q+1}$ as well as the $k-1$ vertices of degree $n-1$ (call them the {\it top} vertices of $H(n,r,k)$).
If a copy of $F$ in $H(n,r,k,q)$ uses $t\geq 2$ edges of $S_{q+1}$, then except the $t+1$ vertices of $S_{q+1}$ and the $k-1$ top vertices, it uses $f-k-t$ many other vertices.
So the number of copies of $F$ in $H(n,r,k,q)$ using at least two edges of $S_{q+1}$ is at most $O_F\left(\sum_{t=2}^q \binom{q}{t}\cdot n^{f-k-t}\right)=O_F(q^2)\cdot n^{f-k-2}$, where we use $q/n\leq \delta$.
Hence, we have
\begin{equation}\label{low bound for H}
\mathcal{N}_F(H)\leq \mathcal{N}_F(H(n,r,k,q))=q\cdot c(n,F)+O_F(q^2)\cdot n^{f-k-2}\leq \epsilon_1 n^{f-k}=\left(\frac{\epsilon_1}{n^k}\right)n^f.
\end{equation}
Since $n$ is sufficiently large, by Theorem~\ref{removal lemma} there are at most $\epsilon_2 n^2$ edges of $H$ whose removal results in a graph $H^{\prime}$ with no copies of $F.$
Since $e(H^\prime)>t_r(n)-\epsilon_2n^2$, by Theorem~\ref{stability theorem},
we conclude that there is an $r$-partition of $V(H)=V(H^\prime)$ such that the total number of edges in $H^\prime$ (also in $H$) between two parts is at least $t_r(n)-\epsilon_3 n^2$.

Fix an $r$-partition $V(H)=V_1\cup\ldots\cup V_r$ which maximizes $|E(H)\cap E(K(V_1,\ldots,V_r))|$.\footnote{We will call such an $r$-partition of $V(H)$ as a {\it max-cut} of $H$.}
By the previous paragraph, we have $|E(H)\cap E(K(V_1,\ldots,V_r))|\geq t_r(n)-\epsilon_3 n^2$.
Let $|V_i|=n_i$ for $i\in [r]$ with $n_1\geq \ldots \geq n_r$.
Using Lemma~\ref{Mubayi}, we can derive that
\begin{equation}\label{v_i}
\left(\frac{1}{r}-\epsilon_4\right)n\leq|V_i|\leq\left(\frac{1}{r}+\epsilon_4\right)n.
\end{equation}
Let $B=E(H)\setminus E(K(V_1,\ldots,V_r))$ and $M=E(K(V_1,\ldots,V_r))\setminus E(H)$. Then we have
\begin{equation}\label{B}
|B|=e(H)-|E(H)\cap E(K(V_1,\ldots,V_r))|\leq (\ex(n,F)+q)-(t_r(n)-\epsilon_3 n^2)\leq \epsilon_4n^2
\end{equation}
and
\begin{equation}\label{M}
|M|=e(K(V_1,\ldots,V_r))-|E(H)\cap E(K(V_1,\ldots,V_r))|\leq t_r(n)-(t_r(n)-\epsilon_3 n^2)\leq\epsilon_4n^2.
\end{equation}

\medskip

\noindent {\bf Claim \rom{1}.} There exist exactly $k-1$ vertices $x_1,\ldots, x_{k-1}$ of degree $d_H(x_i)\geq n-\epsilon_9n$.

\begin{proof}
We need to introduce some definitions first.
Let $U_i\subseteq V_i$ be the set of vertices satisfying $d_M(v)\geq 2\epsilon_6 n$ and $U^\prime_i\subseteq U_i$ be the set of vertices satisfying $d_B(v)\geq d_M(v)/2\geq \epsilon_6 n$.
Let $U=\bigcup^{r}_{i=1}U_i$ and $U^\prime=\bigcup^{r}_{i=1}U^\prime_i$.
Using \eqref{M}, we have
\begin{equation}\label{s prime is small}
|U^\prime|\leq|U|\leq \frac{2|M|}{\epsilon_6 n}\leq \frac{\epsilon_4 n^2}{\epsilon_6 n}\leq  \epsilon_5n,
\end{equation}
where we choose $\epsilon_4\leq \epsilon_5\epsilon_6 $.
Let $T_i$ be the set of vertices of $V_i\setminus U_i$ with $d_B(v)\geq \epsilon_6n$ and $T=\bigcup_{i=1}^rT_i$.
By the max-cut $V_1\cup\ldots\cup V_r$, we have $d_{V_{j}}(x)\geq d_B(x)\geq \epsilon_6n$ for any $x\in U^\prime\cup T$ and any $j\in [r]$.

Let $W$ be a maximum subset in $U^\prime \cup T $ such that $|\bigcap_{x\in W}N_{V_{j}}(x)| \geq \epsilon_7 n$ holds for at least $r-1$ indexes $j\in [r]$.
We will show that $|W|\leq k-1$.
Suppose for a contradiction that $|W|\geq k$.
Without loss of generality, there exists a set $W=\{x_1,\ldots, x_k\}$ such that $|\bigcap_{x\in W}N_{V_{j}}(x)| \geq \epsilon_7 n$ for $j=2,\ldots,r$.
By (\ref{s prime is small}), there are at least $(\epsilon_6n-\epsilon_5n)_k\geq\epsilon_5n^k$ copies of $M_{k}$ in $H[W,V_1\setminus U_1]$.
For each such copy of $M_k$, by (\ref{s prime is small}) again we can choose $(\epsilon_7-\epsilon_5)n-2k$ new vertices in $V_1\setminus U_1$ and $(\epsilon_7-\epsilon_5)n$ vertices in $V_i\setminus U_i$ for $i=2,\ldots,r$ such that the $r$-partite subgraph of $H\cap K(V_1,\ldots,V_r)$ induced by those vertices is $\delta^*$-equivalence, where $\delta^*=2\epsilon_6/(\epsilon_7-\epsilon_5)$.
By Lemmas~\ref{counting F in Tp(n) with adding a matching} and \ref{couting F in near complete graph within a mathing},
each copy of $M_{k}$ is contained in at least $\frac{1}{2}d(m,F)$ copies of $F$, where $m=(\epsilon_7-\epsilon_5)rn$.
Moreover, those copies of $F$ contain only $k$ edges in $H[W,V_1\setminus U_1]$.
Hence, there are at least $\epsilon_{5}n^k \cdot \frac{1}{2}d(m,F)> \epsilon_1 n^{f-k}$
copies of $F$, a contradiction (we choose $\epsilon_{5}(\epsilon_{7}-\epsilon_5)^{f-2k}\gg \epsilon_{1}$).
Thus, $|W|\leq k-1$. In particular, this implies that for each $i\in [r]$,
\begin{equation}\label{eq 1 for claim 1}
\mbox{ there are at most $k-1$ vertices $x$ in $V_i\setminus U_i$ with $d_B(x)\geq \epsilon_6 n$, i.e., $|T_i|\leq k-1$.}\footnote{This is because every $x\in V_i\setminus U_i$ has $d_{V_j}(x)\geq |V_j|-d_M(x)\geq (1/r-3\epsilon_6)n$ for each $j\in [r]\backslash \{i\}$.}
\end{equation}
Furthermore, for each $i\in[r]$, we have
\begin{equation}\label{eq 2 for claim 1}
e_H(V_i\setminus (U_i\cup T_i))\leq 2k \epsilon_6 n.
\end{equation}
Otherwise, by Lemma~\ref{number of M_k with maximum degree} there are at least $(k-1)!  (2\epsilon_6 n)^{k}$ copies of $M_k$ in $H[V_i\setminus (U_i\cup T_i)]$.
Together with the $2k$ vertices of a fixed copy of these $M_k$, we can choose $(\epsilon_7-\epsilon_5)n-2k$ vertices in $V_i\setminus (U_i\cup V(M_k))$ and $(\epsilon_7-\epsilon_5)n$ vertices in $V_j\setminus U_j$ for each $j\in [r]\backslash \{i\}$ to form an $r$-partite $\delta^*$-equivalence subgraph of $H\cap K(V_1,\ldots,V_r)$.
Similarly as before, by Lemma~\ref{couting F in near complete graph within a mathing}, there are at least $(k-1)! (2\epsilon_6 n)^k\cdot \frac{1}{2}d(m,F) > \epsilon_1 n^{f-k}$  copies of $F$ in $H$ (we choose $\epsilon^k_{6}(\epsilon_{7}-\epsilon_5)^{f-2k}\gg \epsilon_{1}$), a contradiction to \eqref{low bound for H}.

Next we show $0\leq|U^\prime|<4^{r-1}rk$.
Suppose not. Then $|U^{\prime}|\geq 4^{r-1}rk$ and without loss of generality, we may assume that $|U^\prime_1|\geq 4^{r-1}k$.
For each vertex $y\in U^\prime_1$ and each $j=2,\ldots,r$, we have
$$d_{V_{j}}(y)\geq d_B(y)\geq d_M(y)/2\geq (|V_j|-d_{V_{j}}(y))/2\geq (n/r-\epsilon_4 n- d_{V_{j}}(y)) /2.$$
Hence, $d_{V_{j}}(y)\geq n/3r-\epsilon_5n>(1/4)|V_j|$.
Take a subset $X_1\subseteq U^\prime$ of size $4^{r-1}k$.
Applying Lemma~\ref{bipartite graph with large degree contain bipartite complete graph} (with $\delta=1/4, m=|X_1|$ and $G=H[X_1,V_2]$ there),
one can find a subset $X_2\subseteq X_1$ of size $4^{r-2}k$ such that there are at least $\epsilon_7n$ common neighbours of vertices of $X_2$ in $V_2$.
Recursively applying Lemma~\ref{bipartite graph with large degree contain bipartite complete graph} (with $\delta=1/4, m=|X_i|$ and $G=H[X_i,V_{i+1}]$ for $2\leq i<r$),
there are sets $X_r\subseteq \ldots \subseteq X_1\subseteq U^\prime_1$ with $|X_i|=4^{r-i}k$ such that the common neighbours of vertices of $X_i$ in $V_i$ for each $i=2,...,r$ is at least $\epsilon_7n$.
But $k=|X_r|\leq |W|\leq k-1$, a contradiction to the property on $|W|$.

Let $X=T\cup U^\prime$ and $\widehat{V}_i=V_i\setminus X$.
Then by \eqref{eq 1 for claim 1}, $|X|\leq |T|+|U^\prime|\leq r(k-1)+ 4^{r-1}rk< (4^{r-1}+1)rk$.
Using \eqref{eq 2 for claim 1} and the definition of $U_i\setminus U_i'$, we have
\begin{align*}\label{equ:sum-Vi-wide}
\sum_{i=1}^{r}e_H(\widehat{V}_i)&\leq \sum_{i=1}^{r}\left(e_H(V_i\setminus (T_i\cup U_i))+\sum_{x\in U_i\setminus U_i'}d_{V_i}(x)\right)\\
&\leq \sum_{i=1}^{r}\left(e_H(V_i\setminus (T_i\cup U_i))+\sum_{x\in U_i\setminus U_i'} d_M(x)/2\right)\\
&\leq 2k \epsilon_6 rn+|M\cap E(K(\widehat{V}_1,\ldots,\widehat{V}_r))|+|U|\cdot |X|.
\end{align*}
Let $X_r\subseteq \ldots \subseteq  X_1\subseteq X_0=X$ be a sequence of subsets of $X$ such that
$X_i$ is a maximum subset of $X_{i-1}$ whose common neighbors in $\widehat{V}_i$ is at least $\epsilon_7n$ for all $i\in [r]$.\footnote{Note that in this step we can prefix any ordering of $\widehat{V}_1, \widehat{V}_2,...,\widehat{V}_r$. That is, for any permutation $\pi: [r]\to [r]$ we can require that $X_i$ is a maximum subset of $X_{i-1}$ whose common neighbors in $\widehat{V}_{\pi(i)}$ is at least $\epsilon_7n$ for all $i\in [r]$.}
By the property on $|W|$, $|X_r|\leq |X_{r-1}| \leq k-1$.
Since $H[X_{i-1},\widehat{V}_i]$ is $K_{|X_i|+1,\epsilon_7 n }$-free, by Theorem~\ref{Kab in bipartite graph},
\begin{align*}
e(H[X_{i-1},\widehat{V}_i])\leq &z\big(|X_{i-1}|,|\widehat{V}_i|,|X_i|+1, \epsilon_7 n\big)
\leq \left(\epsilon_7 n-1\right)^{\frac{1}{|X_i|+1}}|X_{i-1}||\widehat{V}_i|^{1-\frac{1}{|X_i|+1}}+|X_i| |\widehat{V}_i|
\leq (|X_i|+\epsilon_8)|\widehat{V}_i| .
\end{align*}
This implies that $e(H[X,H-X])= \sum^r_{i=1}e(H[X,\widehat{V}_i])$ is at most
\begin{align*}
& \sum^r_{i=1} \left(|X\setminus X_{i-1}|\cdot |\widehat{V}_i|+e(H[X_{i-1},\widehat{V}_i])\right) \leq \sum^r_{i=1}(|X|-|X_{i-1}|+|X_i|+\epsilon_8)\cdot |\widehat{V}_i|\\
\leq & ((r-1)|X|+|X_r|+r\epsilon_8)\cdot(1/r+\epsilon_4)n\leq ((r-1)|X|+|X_r|)\cdot n/r+O(\epsilon_8n).
\end{align*}
Putting everything above together, if $|X_{r}|\leq k-2$, then we can reach a contradiction as follows
\begin{align*}
 e(H)= & e(H[X])+e(H[X,H-X])+e(H-X)\\
\leq & |X|^2/2+e(H[X,H-X])+ e(K(\widehat{V}_1,\ldots,\widehat{V}_r))-|M\cap E(K(\widehat{V}_1,\ldots,\widehat{V}_r))|+\sum_{i=1}^{r}e_H(\widehat{V}_i)\\
\leq & |X|^2/2+\big((r-1)|X|+|X_r|\big)\cdot n/r+O(\epsilon_8n)+t_r(n-|X|)+2k\epsilon_6 rn+|U|\cdot |X|\\
\leq & t_r(n)+(k-2)n/r+O(\epsilon_8 n)<h(n,r,k)< h(n,r,k)+q=e(H),
\end{align*}
where we use $|X|<(4^{r-1}+1)rk$, $|U|\leq \epsilon_5n$ from \eqref{s prime is small}, and the fact that $h(n,r,k)=t_r(n)+(k-1)n/r+O(1)$.
This shows that $|X_{r}|=|X_{r-1}|=k-1$ and $X_r=X_{r-1}$.

Lastly, we show that any vertex in $X_{r-1}$ has degree at least $n-\epsilon_9n$ in $H$.
Suppose for a contradiction that there is an $x\in X_{r-1}$ with $d_H(x)< n-\epsilon_9n$.
Then there exists some $V_j$ with $$d_{V_j}(x)\leq |V_j|-\epsilon_9n/r=|\widehat{V}_{j}|+|T_j\cup U_j'|-\epsilon_9n/r\leq |\widehat{V}_{j}|-\epsilon_9n/2r,$$
implying that $e(H[X_{r-1},\widehat{V}_{j}])\leq (k-1)n-\epsilon_9n/2r$.
Now fix a permutation $\pi:[r]\to [r]$ with $\pi(r)=j$ and find a new sequence $X'_r\subseteq \ldots \subseteq  X'_1\subseteq X'_0=X$ such that
$X'_i$ is a maximum subset of $X'_{i-1}$ whose common neighbors in $\widehat{V}_{\pi(i)}$ is at least $\epsilon_7n$ for all $i\in [r]$.
It is clear that the overall conditions on $X'_r$ remain the same, implying that $X'_r=X_r$ and thus $X'_{r-1}=X_{r-1}$.
We then can repeat the exactly same estimations as above (just using $e(H[X'_{i-1},\widehat{V}_{\pi(i)}])$ instead of $e(H[X_{i-1},\widehat{V}_i])$),
except now we have $e(H[X'_{r-1},\widehat{V}_{\pi(r)}])=e(H[X_{r-1},\widehat{V}_{j}])\leq (k-1)n-\epsilon_9n/2r$
which is better than the previous upper bound $e(H[X_{r-1},\widehat{V}_r])\leq (k-1+\epsilon_8)n$ we used.
Repeating the above estimations, we have
\begin{align*}
e(H)\leq & |X|^2/2+e(H[X,H-X])+ e(K(\widehat{V}_1,\ldots,\widehat{V}_r))-|M\cap E(K(\widehat{V}_1,\ldots,\widehat{V}_r))|+\sum_{i=1}^{r}e_H(\widehat{V}_i)\\
\leq & |X|^2/2+\big((r-1)|X|+|X_r|\big)\cdot n/r-\epsilon_9n/2r+O(\epsilon_8n)+t_r(n-|X|)+2k\epsilon_6 rn+|U|\cdot |X|\\
\leq & t_r(n)+(k-1)n/r-\epsilon_9n/4r <h(n,r,k)< h(n,r,k)+q=e(H),
\end{align*}
a contradiction. This completes the proof of Claim~\rom{1}. \end{proof}

Throughout the rest of this section, we denote $X=\{x_1,\ldots,x_{k-1}\}$ from Claim~\rom{1}.
Let $H^*=K[X]+K(V_1\setminus X,\ldots,V_r\setminus X).$
Let $B^*=E(H)\setminus E(H^*)$ and call edges in $B^*$ {\bf bad}.
Let $M^*=E(H^*)\setminus E(H)$ and call edges in $M^*$ {\bf missing}.
Then $|B^*|-|M^*|=e(H)-e(H^*)=h(n,r,k)+q-e(H^*)\geq q$.
We point out that by definition, $B^*\subseteq B$ and thus $|B^*|\leq |B|\leq \epsilon_4 n^2$ by \eqref{B}.
The next claim gives a significant improvement on the upper bound of $|B^*|$.

\medskip

\noindent {\bf Claim~\rom{2}.} It holds that $|B^\ast|\leq \epsilon_5 n$.

\begin{proof}
Recall the definition of $d(n,F)$ (see the paragraph before Lemma~\ref{counting F in Tp(n) with adding a matching}).
We also need to introduce the following parameter, which will be repeatedly used in the proof later.
\begin{definition}\label{Def:F(e)}
For each $e\in B^*$, let $F(e)$ denote the number of copies of $F$ in $H$ containing $e$ as the unique edge from $B^*$.
\end{definition}
We now partition $B^\ast=B_1\cup B_2$, where $B_1=\left\{e\in B^\ast: F(e)>(1-\epsilon)d(n,F)(n/r)^{k-1}\right\}.$

First we demonstrate that to prove this claim, it suffices to show that $|B_1|>(1-\epsilon)|B^\ast|.$
Suppose that $|B_1|>(1-\epsilon)|B^\ast|$ and $|B^\ast|>\epsilon_5 n$ (for a contradiction).
Then the number of copies of $F$ in $H$ is
\begin{align*}
\mathcal{N}_F(H)&\geq  \sum_{e\in B_1}F(e)\geq |B_1|(1-\epsilon)d(n,F)(n/r)^{k-1}\\
&\geq(1-\epsilon)^2(\epsilon_5 n)\cdot d(n,F)\cdot (n/r)^{k-1} >q\cdot c(n,F)+O(q^2)n^{f-k-2}\geq \mathcal{N}_F(H),
\end{align*}
where the second last inequality holds strictly as $\epsilon_5 n\gg \delta n\geq q$ and the last inequality follows from \eqref{low bound for H}.
This is a contradiction.
So our goal is to show $|B_1|>(1-\epsilon)|B^\ast|,$ or equivalently $|B_2|<\epsilon |B^\ast|$.

Suppose to the contrary that $|B_2|\geq \epsilon |B^\ast|$.
Fix an arbitrary $e\in B_2$, without loss of generality, say $e\in V_1$.
By a {\it potential} copy $F$ (with respect to $e$), we mean a copy of $F$ whose edges are from $\{e\}\cup E(H^*)$.
Clearly, any potential copy $F$ must contain $X$ and we call it {\it strong} if all its edges incident to $X$ are in $H$.
Let $\mathcal{F}_e$ denote the set of strong potential copies of $F$.
Every vertex $x_i\in X$ has degree at least $n-\epsilon_9 n$.
So in $V_1\cup X$ there are at least $((1-\epsilon_{10})(n/r))^{k-1}$ copies of $M_k$ containing $e$ and $X$,
implying that $|\mathcal{F}_e|\geq ((1-\epsilon_{10})(n/r))^{k-1}d(m,F)$, where $m\geq (1-\epsilon_{10}) n$.

Let $M^\prime=\{e'\in M^*: V(e')\cap X=\emptyset\}$.
If a copy $F$ in $\mathcal{F}_e$ is not a copy in $H$, then it must contain a missing edge $e'$ in $M^*=E(H^*)\setminus E(H)$;
furthermore, this missing edge $e'$ must be in $M^\prime$.
Hence, there are at least $(\epsilon/2)d(n,F)(n/r)^{k-1}$ copies of $F\in \mathcal{F}_e$ containing a missing edge in $M^\prime$; otherwise
$$F(e)>((1-\epsilon_{10})(n/r))^{k-1}d(m,F)-(\epsilon/2)d(n,F)\left(n/r\right)^{k-1}>(1-\epsilon)d(n,F)\left(n/r\right)^{k-1},$$
a contradiction to the definition of $e\in B_2$.

Let $e'\in M^\prime$ with $V(e')\cap V(e)=\emptyset$.
Any copy $F \in \mathcal{F}_e$ containing $e'$ has exactly $f-k-3$ vertices not in $V(e)\cap V(e')\cup X$.
So there are at most $O_F(n^{f-k-3})$ copies of $F \in \mathcal{F}_e$ containing $e'$.
By Lemma~\ref{counting F in Tp(n) with adding a matching} and the fact that $|M'|\leq |M^*|\leq |B^*|\leq \epsilon_4 n^2$,
there are at most $O_F(n^{f-k-3})|M^\prime|\leq O(\epsilon_4)n^{f-k-1}\leq (\epsilon/4)d(n,F)(n/r)^{k-1}$ copies of $F\in \mathcal{F}_e$ containing a missing edge $e'\in M^\prime$ that does not intersect $e$.

Combining the conclusions of the above two paragraphs, we can derive that
for any $e\in B_2$, there are at least $(\epsilon/4)d(n,F)(n/r)^{k-1}$ copies of $F\in \mathcal{F}_e$, which contains a missing edge $e'\in M^\prime$ that intersects $e$.
Such a missing edge $e'$ can appear in at most $O_F(n^{f-k-2})$ copies of $F\in \mathcal{F}_e$ (note that $e'$ intersects $e$).
Hence by Lemma~\ref{counting F in Tp(n) with adding a matching}, we can conclude that there exists a vertex $v\in V(e)$ with
$$d_{M'}(v)\geq \frac{(\epsilon/8)d(n,F)(n/r)^{k-1}}{O_F(n^{f-k-2})}>\epsilon_{14} n.$$

Let $A=\{v\in V(H):d_{M^\prime}(v)>\epsilon_{14} n\}$.
We have argued above that every $e\in B_2$ has a vertex in $A$.
Consequently, since $|B^\ast|\geq |M^*|\geq |M^\prime|$, we have
$$2\sum_{v\in A} d_{B_2}(v)\geq 2|B_2|\geq 2\epsilon |B^\ast|\geq 2\epsilon |M^\prime|\geq \epsilon\sum_{v\in A} d_{M^\prime}(v)>\epsilon|A|\epsilon_{14} n.$$
By average, there exists $u\in A$ with $d_B(u)\geq d_{B^*}(u)\geq d_{B_2}(u)\geq \epsilon \epsilon_{14}n/2> \epsilon _{13} n$.
Without loss of generality, assume that $u\in V_1\setminus X$.
By the max-cut property of the partition $V_1\cup \ldots \cup V_r$,
the vertex $u$ has at least $\epsilon_{13}n$ neighbors in $V_i$ for each $i\in [r].$
Let $V^\prime_i$ be the set of common neighbors of $\{u\}\cup X$ in $V_i$ and let $Z=X\cup \{u\} \cup V^\prime_1\cup \ldots\cup V^\prime_r$.
We have $|V_i^\prime|\geq (\epsilon_{13}-(k-1)\epsilon_9)n \geq \epsilon_{12} n$.
For any $v\in V_1^\prime$, by Lemma~\ref{counting F in Tp(n) with adding a matching}, the number of the potential copies of $F$ in $H[Z]$ containing $uv$ is at least
$\left((\epsilon_{12}n-k)\right)^{k-1}\cdot d(r \epsilon_{12} n,F)>\epsilon_{11} n^{f-k-1}$.
Summing over all such $v\in V^\prime_1$, we obtain at least
$\epsilon_{12}n\cdot \epsilon_{11}  n^{f-k-1}\geq \epsilon_{10} n^{f-k}$ potential copies of $F$ containing $u$.
At least half of these potential copies of $F$ must have a missing edge $e'\in M^*$, as otherwise we get a contradiction to (\ref{low bound for H}).
By the definition of $Z$, every such $e'$ is not incident to $\{u\}\cup X$, thus $e'\in M^\prime$ and it appears at most $O_F(n^{f-k-2})$ copies of potential copies of $F$ containing $u$.
By double counting, a contradiction can be derived as follows
$$
\epsilon_{9}n^2< \frac{(\epsilon_{10}/2)n^{f-k}}{O_F(n^{f-k-2})}\leq |M^\prime|\leq |B^*|\leq \epsilon_4 n^2.
$$
This final contradiction finishes the proof of Claim~\rom{2}.
\end{proof}

Denote by $f(n,F)$ the minimum number of copies of $F$ obtained from $I_{k-1}+ T_r(n-k+1)$ by adding an edge (say $e$) to one class of $T_r(n-k+1)$
and removing all edges between $V(e)$ and $I_{k-1}$.\footnote{Note that here the edges between $V(e)$ and $I_{k-1}$ are deleted,
so there is a unique way of embedding $F$ in the resulting graph, i.e., first finding a $k$-matching consisting of $e$ and edges $x_iy_i$ for $1\leq i\leq k-1$ where $y_1,\ldots,y_{k-1}$ are from the same partite set and then embedding $F$ in the same way as in the definition of $d(n,F)$ (see the paragraph before Lemma~\ref{counting F in Tp(n) with adding a matching}).}
So we see $f(n,F)=(n/r)^{k-1}d(n,F)+O_F(n^{f-k-2})$ is a polynomial of degree $f-k-1$.

\medskip

\noindent {\bf Claim~\rom{3}.} Let $\omega=h(n,r,k)-e(H\setminus B^\ast).$ Then $|B^*|=q+\omega$, $|M^*|\leq \omega\leq \epsilon_5 n$,
and there exists an absolute positive constant $c=c(F)$ such that for each $e\in B^*$, $F(e)\geq f(n,F)- c\cdot \omega\cdot n^{f-k-2}$.

\begin{proof}
By the definition of $\omega$, we have $|B^\ast|= e(H)-e(H\setminus B^\ast)=e(H)-h(n,r,k)+\omega=q+\omega$.
So by Claim~\rom{2}, $\omega\leq |B^*|\leq \epsilon_5 n$.
Since $|B^*|-|M^*|=e(H)-e(H^*)=h(n,r,k)+q-e(H^*)\geq q$,
we see that $\omega=|B^\ast| -q \geq |M^\ast|$.
If $\omega=0$, then $M^*=\emptyset$ and $|B^*|=q$,
implying that $H(n,r,k)=H^*\subseteq H$.
In this case, the conclusion holds trivially.
So we may assume that $\omega\geq 1$.

Since $e(H\setminus B^\ast)=h(n,r,k)-\omega$,
it is easy to see that $e((H\setminus B^\ast)\setminus X)=t_r(n-k+1)-\omega+t$, where $t$ denotes the number of missing edges incident to $X$.
Note that $(H\setminus B^\ast)\setminus X$ is an $(n-k+1)$-vertex $r$-partite graph with the partition $(V_1\setminus X)\cup \ldots \cup (V_r\setminus X)$.
So by Lemma~\ref{Mubayi}, we have for each $i\in [r]$
\begin{equation*}\label{Sizes of V_i}
\left\lfloor\frac{n-k+1}{r}\right\rfloor-\omega+t\leq |V_i\setminus X|\leq \left\lceil\frac{n-k+1}{r}\right\rceil+\omega-t.
\end{equation*}
Consider an arbitrary edge $e\in B^\ast$. Without loss of generality, say $e\in H[V_1\setminus X]$.
Using the above bound on $|V_i\setminus X|$, the number of $k$-matchings each consisting of $e$ and $k-1$ edges in $H[X,V_1\setminus X]$ is at least $(|V_1\setminus X|-t-k)^{k-1}\geq \big(n/r-\omega-2k\big)^{k-1}$.
So the number of potential copies of $F$ (i.e., edges are only from $\{e\}\cup E(H^*)$),
each of which contains $e\cup X$ and has no edges between $V(e)$ and $X$, is at least $\big(n/r-\omega-2k\big)^{k-1}d(n-k-r-r\omega,F)=(n/r)^{k-1}d(n,F)+O_F(\omega)\cdot n^{f-k-2}$.
Such a potential copy of $F$ possibly contains some missing edge $e'\in M^*$, but every such $e'$ must have at least one endpoint outside of $V(e)\cup X$.
Since every such $e'$ lies in at most $O_F(n^{f-k-2})$ potential copies of $F$ counted above, we can derive that for some $c=c_F>0$,
\begin{align*}
F(e)\geq (n/r)^{k-1}d(n,F)+O_F(\omega)\cdot n^{f-k-2}- |M^*|\cdot O_F(n^{f-k-2})\geq f(n,F)- c\cdot \omega\cdot n^{f-k-2},
\end{align*}
where we use $|M^*|\leq \omega$ and $f(n,F)=(n/r)^{k-1}d(n,F)+O_F(n^{f-k-2})$. This proves Claim~\rom{3}.
\end{proof}

\section{Admissible color-$k$-critical graphs}\label{Section:proof of theorem 1.6}
In this section, we first introduce an ample subfamily of color-$k$-critical graphs (called {\bf admissible}; see Definition~\ref{def 3}),
which include all color-$\ell$-critical graphs for $\ell\in \{1,2\}$ and Kneser graphs $K(n,2)$.
Subsequently, we demonstrate that for any graph $F$ within this subfamily, there exists a constant $\delta>0$ such that the equality $h_F(n,q)=t_F(n,q)$ holds for all sufficiently large $n$ and all $1\leq q\leq \delta n$ (see Theorem~\ref{Thm:adm}).
These results collectively lead to the proof of Theorem~\ref{Thm:Knes}.

\subsection{Definitions and examples}
We now define the subfamily of color-$k$-critical graphs as mentioned above.
We begin by the following.

\begin{definition}\label{def 2}
\emph{Let $F$ be a graph with $\chi(F)=r+1\geq 3$. Let $\mathcal{F}=(F_0, F_1,\ldots,F_r)$ be an ordered sequence of graphs.\footnote{These graphs $F_i$ for $i\in \{0,1,...,r\}$ may be empty.}
Write $E(\mathcal{F})=\bigcup_{i=0}^r E(F_i)$.
If the graph $F_0+F_1+\ldots+F_r$ contains a copy of $F$ as its spanning subgraph and this $F$ contains all edges in $E(\mathcal{F})$,
then we say $\mathcal{F}$ is an {\it embedding type} (or for short, a {\it type}) of $F$.
Moreover, we let $\mathcal{F}_\alpha:=F_0$ be the {\it top} of the type $\mathcal{F}$ and $\mathcal{F}_\beta:=\bigcup_{i=1}^{r}F_i$ be the {\it bottom} of the type $\mathcal{F}$.
If $|V(F_0)|=\ell$, then we also call $\mathcal{F}$ an $\ell$-type.}
\end{definition}

The definition of types offers us a useful perspective for counting the number of copies $F$ in a graph with a given partition of $r+1$ parts.
For a graph $G$, let $\nu(G)$ be its {\it matching number}, i.e., the maximum size of a matching in $G$.

\begin{definition}\label{def 3}
\emph{A color-$k$-critical graph $F$ with $\chi(F)=r+1\geq 3$ is called {\bf admissible}, if
for any embedding type $\mathcal{F}=(F_0, F_1,\ldots,F_r)$ of $F$, the following hold that
\begin{itemize}
\item [(A).] $\nu(\bigcup_{i=1}^{r}F_i)\geq k-|V(F_0)|$, and
\item [(B).] if there is an edge in $F_0$, then $\nu(\bigcup_{i=1}^{r}F_i)\geq k+1-|V(F_0)|$.
\end{itemize}
}
\end{definition}

The family of admissible color-$k$-critical graphs forms a diverse and abundant collection.
In what follows, we will provide some notable examples and properties that showcase the richness of this family.
\begin{itemize}
\item All color-$\ell$-critical graphs $F$ for $\ell\in \{1,2\}$ are admissible.
The case when $\ell=1$ is trivial as both properties (A) and (B) are automatically satisfied.
Now we consider the case when $\ell=2$. First, the property (A) follows by the definition that $F$ is color-2-critical.
For (B), clearly it holds when $|V(F_0)|\geq 3$. So we may assume $|V(F_0)|\leq 2$.
Since there is an edge in $F_0$, we may assume $F_0$ is just an edge $ab$.
We need to show $\nu(\bigcup_{i=1}^{r}F_i)\geq 1$, which again follows by the definition.

\item In the coming subsection, we show that all Kneser graphs $K(n,2)$ belong to admissible color-$k$-critical graphs for $k=3$.

\item {\bf Proposition.} If $F_1$ is an admissible color-$k$-critical graph and $F_2$ is an admissible color-$\ell$-critical graph with $\chi(F_1)=\chi(F_2)$, then $F_1\cup F_2$ is an admissible color-$(k+\ell)$-critical graph.\\
    \vskip -3mm
    Repeatedly using this proposition, we see that the disjoint union of cliques of the same size (or more generally, the disjoint union of color-$\ell_i$-critical graphs $F_i$, where $\ell_i\in \{1,2\}$ for $i\in [t]$, of the same chromatic number) is an admissible color-$k$-critical graph for $k=\sum_{i\in [t]} \ell_i$.

\end{itemize}

\subsection{Kneser graphs}\label{Subs:Kneser}

Let $n,t$ be positive integers with $n\geq 2t+1$.
The {\it Kneser graph} $K(n,t)$ is the graph with the vertex set $\binom{[n]}{t}$,
where any two vertices $A,B\in \binom{[n]}{t}$ are adjacent if and only if $A\cap B=\emptyset$.
Answering a famous conjecture of Kneser \cite{Kneser}, Lov\'asz \cite{Lov78} proved that the chromatic number of $K(n,t)$ equals $n-2t+2$.
For a permutation $\pi$ on $[n]$, we say a $t$-subset of $[n]$ is {\it $\pi$-stable} if it contains no pairs $\{\pi(i),\pi(i+1)\}$ with $1\leq i<n$ nor the pair $\{\pi(1),\pi(n)\}$.
Schrijver \cite{Sch78} proved that for any permutation $\pi$ on $[n]$,
the induced subgraph of $K(n,t)$ on the vertex set consisting of all $\pi$-stable $t$-subsets of $[n]$ has the same chromatic number $n-2t+2$.\footnote{In fact, Schrijver \cite{Sch78} also proved that such an induced subgraph of $K(n,t)$ is {\it vertex-critical}, i.e., deleting any vertex will decrease the chromatic number.}

To the best of our knowledge, the cases $n\geq 6$ of the following lemma appear to be previously unestablished.

\begin{lemma}\label{Lem:K(n,2)}
For any $n\geq 5$, the Kneser graph $K(n,2)$ is color-$3$-critical with chromatic number $n-2$.
\end{lemma}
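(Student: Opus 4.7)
The plan is to verify, for $k=3$, the two defining conditions of Definition~\ref{Def:k-critical} separately, with $\chi(K(n,2))=n-2$ being the Lov\'asz--Kneser theorem and Schrijver's theorem (recalled in the paragraph preceding the lemma) as the main tools.

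For the vertex condition, given any two distinct vertices $A,B\in\binom{[n]}{2}$, I aim to show $\chi(K(n,2)-\{A,B\})\geq n-2$ by locating a Schrijver $\pi$-stable subgraph inside $K(n,2)-\{A,B\}$. The crux is a short Hamilton-cycle-through-two-edges observation: for every $n\geq 5$ and every pair of distinct edges $A,B$ of $K_n$, there is a Hamilton cycle of $K_n$ containing both $A$ and $B$. A direct construction suffices: when $A\cap B=\emptyset$, insert the remaining $n-4$ vertices on one arc between $A$ and $B$; when $|A\cap B|=1$, glue $A$ and $B$ at their common vertex and append the remaining $n-3$ vertices. Reading such a Hamilton cycle as a permutation $\pi$ of $[n]$, both $A$ and $B$ become consecutive pairs along the cycle, hence $\pi$-unstable, so they lie outside the Schrijver subgraph determined by $\pi$. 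Since this Schrijver subgraph already has chromatic number $n-2$, it is a subgraph of $K(n,2)-\{A,B\}$ with $\chi=n-2$, and the vertex condition follows.

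For the edge condition, I would exhibit an explicit $3$-matching $M$ with $\chi(K(n,2)-M)\leq n-3$, starting from the proper $(n-2)$-coloring of $K(n,2)$ that assigns color $i$ to every pair with minimum element $i$ for $1\leq i\leq n-3$ and a single extra color to the three pairs inside $\{n-2,n-1,n\}$. The goal is to merge the last two color classes into one independent set. Their union is exactly $\binom{S}{2}$ with $S=\{n-3,n-2,n-1,n\}$, and the induced subgraph of $K(n,2)$ on these six vertices is a copy of $K(4,2)$, which is a perfect matching consisting of the three edges
\[
M=\bigl\{\{\{n{-}3,n{-}2\},\{n{-}1,n\}\},\ \{\{n{-}3,n{-}1\},\{n{-}2,n\}\},\ \{\{n{-}3,n\},\{n{-}2,n{-}1\}\}\bigr\}.
\]
Removing $M$ kills all edges inside $\binom{S}{2}$, turning the merged class into an independent set and yielding a proper $(n-3)$-coloring of $K(n,2)-M$. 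The six endpoints of $M$ are distinct, so $M$ is indeed a matching of size three.

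The main obstacle is the vertex condition rather than the edge one: Schrijver's theorem directly provides vertex-criticality of a particular induced subgraph, but here we need resilience of $K(n,2)$ itself under removal of \emph{any} two vertices. The Hamilton-cycle-through-two-edges step is precisely what makes the choice of $\pi$ adaptive to the two prescribed vertices $A,B$, and it is exactly where the mild hypothesis $n\geq 5$ enters (the boundary case $n=5$ being the Petersen graph). Once both conditions are in place, Definition~\ref{Def:k-critical} identifies $K(n,2)$ as color-$3$-critical.
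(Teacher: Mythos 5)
Your proof is correct and follows essentially the same approach as the paper: both invoke Lov\'asz--Kneser for $\chi(K(n,2))=n-2$, both exhibit the $3$-matching inside the copy of $K(4,2)$ on the pairs from a fixed $4$-set to handle the edge condition, and both handle the vertex condition by producing, for given $A,B$, a permutation $\pi$ under which $A$ and $B$ are not $\pi$-stable and then applying Schrijver's theorem. The only cosmetic difference is that you phrase the choice of $\pi$ via a Hamilton cycle of $K_n$ through the two edges $A,B$, whereas the paper reduces by symmetry to $A=\{1,2\}$, $B\in\{\{2,3\},\{3,4\}\}$ and takes $\pi=\mathrm{id}$.
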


\begin{proof}
It is well known that $\chi(K(n,2))=n-2$.
First we claim that there exist three suitable edges in $K(n,2)$ whose removal will decrease the chromatic number to $n-3$.
For each $5\leq j\leq n$, let $V_j$ be the set consisting of all 2-sets $\{i,j\}$ with $1\leq i<j$,
and let $V_4$ be the set consisting of all 2-sets in $[4]$.
Then $V(K(n,2))=\bigcup_{j=4}^n V_j$, where $V_5,..., V_n$ form $n-4$ independent sets and $V_4$ induces a matching of size three in $K(n,2)$.
This proves the claim.

Next we show that deleting any two vertices from $K(n,2)$ will remain the same chromatic number $n-2$.
Let us consider any two vertices $A,B$ in $K(n,2)$.
Without loss of generality, we may assume $A=\{1,2\}, B=\{2,3\}$ or $A=\{1,2\}, B=\{3,4\}$.
In either case, it is easy to see that there exists a permutation $\pi$ on $[n]$ such that both $A$ and $B$ are not $\pi$-stable (e.g., taking $\pi(i)=i$ for each $i\in [n]$).
Then $K(n,2)-\{A,B\}$ contains all $\pi$-stable $2$-subsets of $[n]$ and thus by Schrijver's result \cite{Sch78}, it has chromatic number $n-2$.
Putting everything together, we see that $K(n,2)$ is color-$3$-critical.
\end{proof}

Combined with Theorem~\ref{extremal graph for mk-critical graph},
this shows that for $K=K(t,2)$, we have $\ex(n,K)=e(H(n,t-3,3)).$
The following lemma is the main result of this subsection.

\begin{lemma}\label{Lem2:K(n,2)}
For any $n\geq 5$, the Kneser graph $K(n,2)$ is an admissible color-$3$-critical graph.
\end{lemma}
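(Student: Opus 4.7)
The goal is to verify conditions (A) and (B) of Definition~\ref{def 3} for $F = K(n,2)$ with $k = 3$ and $r = n - 3$. Given an embedding type $\mathcal{F} = (F_0, F_1, \ldots, F_{n-3})$, write $\ell := |V(F_0)|$. My plan is to proceed by case analysis on $\ell$, using Schrijver's theorem \cite{Sch78} (the tool behind Lemma~\ref{Lem:K(n,2)}) as the principal engine, with a direct structural analysis taking over when that engine stalls.

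The main ingredient is a strengthening of Lemma~\ref{Lem:K(n,2)}: if $S \subseteq V(K(n,2))$, viewed as a set of edges of $K_n$, can be realized as a subset of the edges of some Hamilton cycle of $K_n$ (equivalently, $S$ forms a subgraph of $K_n$ with maximum degree at most $2$ and no cycle of length shorter than $n$), then $\chi(K(n,2) \setminus S) = n - 2$. This follows by applying Schrijver's theorem to the permutation $\pi$ that traces such a Hamilton cycle: every element of $S$ fails to be $\pi$-stable, so the Schrijver-stable subgraph (of chromatic number $n-2$) lies entirely inside $K(n,2) \setminus S$.

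With this lemma, the straightforward cases follow quickly. Both (A) and (B) are vacuous when $\ell \geq 4$. For (B) with $\ell = 3$, the hypothesis that $F_0$ contains an edge forces a disjoint pair $A \cap B = \emptyset$ in $V(F_0) = \{A, B, C\}$, so the three edges $A, B, C$ of $K_n$ have maximum degree $\leq 2$ and contain no triangle (the disjoint pair precludes both a common-vertex star and a triangle). They therefore form a disjoint union of paths on at most $6$ vertices and embed into a Hamilton cycle of $K_n$ for $n \geq 5$; the strengthened lemma gives $\chi(K(n,2) \setminus V(F_0)) = n - 2$, so any $(n-3)$-partition of the bottom must contain a within-part edge, i.e., $\nu(\bigcup F_i) \geq 1$. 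The case $\ell = 2$ of (A) is immediate from Lemma~\ref{Lem:K(n,2)} itself.

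The principal obstacle lies in the remaining cases, namely $\ell \in \{0, 1\}$ for (A) and $\ell = 2$ for (B), where a matching of size $2$ or $3$ in the bottom is required. The naive Schrijver-type approach---assume the bottom matching is smaller than claimed, extract a vertex cover $C$ of the within-part edges of size at most $2\nu$, and try to embed $V(F_0) \cup C$ in a Hamilton cycle of $K_n$---breaks down when $V(F_0) \cup C$ contains an obstruction such as a $4$-cycle, a triangle, or a vertex of degree $\geq 3$. My plan for these cases is to proceed by contradiction and examine the forced $(n-3)$-coloring of $K(n,2) \setminus (V(F_0) \cup C)$, noting that each color class must be an intersecting family of $2$-subsets, hence a star or a triangle in $K_n$. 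Every vertex $w \in C$ has $\binom{n-2}{2}$ neighbors in $K(n,2)$, distributed across many color classes, so reinserting $w$ into the partition is forced to create new within-part edges which, combined with the existing matching, yield the desired matching size. The anticipated hardest subcase is $\ell = 2$ of (B) when $V(F_0) \cup C$ forms a $4$-cycle in $K_n$ (for instance, $\{\{1,2\},\{3,4\},\{1,3\},\{2,4\}\}$): the leftover graph then decomposes rigidly into two star-or-triangle color classes, and I anticipate completing the argument through a concrete combinatorial analysis of the reinsertion of $C$, exploiting this decomposition to exhibit two vertex-disjoint within-part edges.
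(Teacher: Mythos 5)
Your proposal is not a complete proof: you handle the straightforward cases correctly and your ``strengthened Schrijver lemma'' is sound (it is exactly the tool the paper uses), but you explicitly defer the hard cases --- (A) with $\ell\in\{0,1\}$ and (B) with $\ell=2$ --- to a case analysis that you ``anticipate completing'' and do not carry out. Moreover, the route you sketch (classifying the forced $(n-3)$-coloring of $K(n,2)\setminus(V(F_0)\cup C)$ into stars and triangles and chasing the reinsertion of $C$) is far more involved than what is needed, which is itself a signal that the approach has drifted.

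The paper closes these cases in a few lines. It first isolates what you essentially proved in your $\ell=3$ step as a standalone claim --- \emph{every $3$-subset $X\subseteq V(K(n,2))$ with $\chi(K(n,2)-X)\le n-3$ is an independent set of $K(n,2)$} (your Hamilton-cycle embedding of $\{A,B,C\}$ when $A\cap B=\emptyset$ is exactly this proof). Then, crucially, it does \emph{not} take a two-vertex cover $C$ of $\bigcup_i F_i$: from $\nu(\bigcup_i F_i)\le 1$ it extracts a \emph{single} covering vertex $x$ (barring the degenerate case where $\bigcup_i F_i$ is a triangle, which the paper glosses over and you should note as requiring a short separate argument), so that $\{x\}\cup V(F_0)$ has size at most three. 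If $|\{x\}\cup V(F_0)|\le 2$, as in (A) with $\ell\in\{0,1\}$, its removal leaves an $(n-3)$-coloring and contradicts color-$3$-criticality directly; if $|\{x\}\cup V(F_0)|=3$ and $V(F_0)$ is an edge, as in (B) with $\ell=2$, it is a non-independent critical $3$-subset, contradicting the claim. The $4$-cycle obstruction you single out as the hardest subcase therefore never appears, because the set under consideration never acquires four elements. (For (A) with $\ell=0$ the paper invokes an exercise of Matou\v{s}ek guaranteeing at least three monochromatic edges in any $(n-3)$-coloring of $K(n,2)$, together with color-$3$-criticality.) You should reorganize your argument along these lines rather than attempting the reinsertion analysis.
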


\begin{proof}
By Lemma~\ref{Lem:K(n,2)}, we know that $K:=K(n,2)$ is color-$3$-critical.
It remains to show that $K$ is admissible.

Let $X$ be any critical subset in $K$, i.e., $|X|=3$ and $\chi(K-X)=2$.
First we claim that $X$ is an independent set in $K$.
For $n=5$, $K$ is just the Petersen graph and this is evident to see.
So we may assume $n\geq 6$.
Let $X=\{A, B, C\}$ and suppose for a contradiction that $A\cap B=\emptyset$.
Without loss of generality, we let $A=\{1,2\}$ and $B=\{3,4\}$.
There are four cases for $C$: namely, $(|C\cap A|, |C\cap B|)$ can be $(0,0), (0,1), (1,0)$ or $(1,1)$.
In each case, we can find a permutation $\pi$ on $[n]$ such that each of $A,B,C$ can be expressed as $\{\pi(i), \pi(i+1)\}$ for some $1\leq i<n$.
So $K-X$ contains all $\pi$-stable 2-subsets of $[n]$ and by Schrijver's result \cite{Sch78}, $\chi(K-X)=3$, a contradiction.
This shows the claim.

Consider any embedding type $\mathcal{F}=(F_0, F_1,\ldots,F_{n-3})$ of $K$.
All we need to show is that
\begin{itemize}
\item [(A).] $\nu(\bigcup_{i=1}^{n-3}F_i)\geq 3-|V(F_0)|$, and
\item [(B).] If there is an edge in $F_0$, then $\nu(\bigcup_{i=1}^{n-3}F_i)\geq 4-|V(F_0)|$.
\end{itemize}
For (A), there is nothing to prove if $|V(F_0)|\geq 3$.
If $|V(F_0)|=2$, as $K $ is color-$k$-critical, then we have $\chi(K-F_0)=\chi(K)=n-2$.
So there must be at least one edge in $\bigcup_{i=1}^{n-3}F_i$ (as otherwise $K-F_0$ can be partitioned into $n-3$ independent sets, a contradiction).
This shows that $\nu(\bigcup_{i=1}^{n-3}F_i)\geq 1$, i.e., (A) holds whenever $|V(F_0)|=2$.
If $|V(F_0)|=1$, then we claim that $\nu(\bigcup_{i=1}^{n-3}F_i)\geq 2$.
Otherwise, $\nu(\bigcup_{i=1}^{n-3}F_i)\leq 1$ and thus $\bigcup_{i=1}^{n-3}F_i$ has a vertex $x$ covering all its edges,
but this leads to that $\chi\left(K-\big(\{x\}\cup V(F_0)\big)\right)\leq n-3$, where $|\{x\}\cup V(F_0)|=2$, a contradiction to $K$ is color-$k$-critical.
Lastly, we consider $|V(F_0)|=0$.
In this case, we can derive $\nu(\bigcup_{i=1}^{n-3}F_i)\geq 3$ from that $K$ is color-$k$-critical easily.\footnote{This also can be derived from an exercise in the book of Matou\v{s}ek \cite{Mat03} (see Section 3.5, Exercise 3), which asserts that any coloring of $K(n,2)$ in $n-3$ colors contains at least three monochromatic edges.}

It remains to show (B). Suppose that $F_0$ contains an edge (so $|V(F_0)|\geq 2$).
If $|V(F_0)|\geq 4$, then again there is nothing to prove.
Suppose $|V(F_0)|=3$. If $\bigcup_{i=1}^{n-3}F_i$ contains no edges,
then $F_0$ becomes a critical subset in $F$ containing an edge, contradicting the above claim.
So $\bigcup_{i=1}^{n-3}F_i$ contains at least one edge, i.e., $\nu(\bigcup_{i=1}^{n-3}F_i)\geq 1$, as desired.
Lastly, we consider $|V(F_0)|=2$, i.e., $F_0$ is an edge say $ab$.
We want to show $\nu(\bigcup_{i=1}^{n-3}F_i)\geq 2$ in this case.
Suppose not. Then $\nu(\bigcup_{i=1}^{n-3}F_i)\leq 1$ and so $\bigcup_{i=1}^{n-3}F_i$ contains a vertex $c$ covering all its edges.
In this case, we observe that $\{a,b,c\}$ becomes a critical subset in $F$ which is not an independent set, again a contradiction to the above claim.
This proves that $K$ is admissible, completing the proof of this lemma.
\end{proof}

\subsection{Supersaturation for admissible graphs}
In the remainder, we present a proof of the main result of this section as follows.
This, in conjunction with Lemma~\ref{Lem2:K(n,2)}, provides a complete proof for Theorem~\ref{Thm:Knes}.

\begin{theorem}\label{Thm:adm}
For any admissible color-$k$-critical graph $F$, there exists a constant $\delta>0$ such that for any sufficiently large integer $n$ and any integer $1\leq q\leq \delta n$, we have
$h_F(n,q)=t_F(n,q).$
\end{theorem}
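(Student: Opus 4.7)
Proof Plan:

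Let $H$ be an $n$-vertex graph with $e(H) = \ex(n, F) + q$ attaining $\mathcal{N}_F(H) = h_F(n, q)$; the goal is $\mathcal{N}_F(H) \geq t_F(n, q)$. Apply the structural analysis of Section~\ref{sec_properties-of-H} (Claims I, II, and III) to $H$: this yields a partition $V(H) = X \cup V_1 \cup \cdots \cup V_r$ with $X = \{x_1, \ldots, x_{k-1}\}$ consisting of the near-universal vertices, nearly balanced parts $V_i$ of size close to $(n - k + 1)/r$, and with respect to the reference graph $H^* := K[X] + K(V_1 \setminus X, \ldots, V_r \setminus X) \cong H(n, r, k)$, we have $|B^*| = q + \omega$ and $|M^*| \leq \omega$ for some integer $0 \leq \omega \leq \epsilon_5 n$, where $B^* := E(H) \setminus E(H^*)$ and $M^* := E(H^*) \setminus E(H)$ are the bad and missing edge sets. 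If $\omega = 0$, then $H^* \subseteq H$, so $H$ is obtained from $H(n, r, k)$ by adding exactly $q$ new edges, and $\mathcal{N}_F(H) \geq t_F(n, q)$ by the definition of $t_F$, finishing the proof.

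The main task is to derive a contradiction from $\omega \geq 1$, which I would do via a local exchange. Pick any $e_b \in B^*$ and $e_m \in M^*$ and form $H' := H - e_b + e_m$, which has the same number of edges as $H$. Since copies of $F$ in $H$ that do not use $e_b$ correspond exactly to copies of $F$ in $H'$ that do not use $e_m$, we have $\mathcal{N}_F(H) - \mathcal{N}_F(H') = \mathcal{N}_F^{e_b}(H) - \mathcal{N}_F^{e_m}(H')$, where $\mathcal{N}_F^e(G)$ counts copies of $F$ in $G$ containing $e$. Claim~III supplies $\mathcal{N}_F^{e_b}(H) \geq F(e_b) \geq f(n, F) - C \omega n^{f - k - 2} = \Omega(n^{f - k - 1})$ (using Lemma~\ref{counting F in Tp(n) with adding a matching}). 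Since $H^* \setminus M^*$ is $F$-free (as $H^*$ is extremal), every copy of $F$ in $H'$ through $e_m$ must also contain some bad edge $e' \in B^* \setminus \{e_b\}$; the task thus reduces to enumerating, for each $e' \in B^* \setminus \{e_b\}$, the embedding types of $F$ that use both $e_m$ and $e'$, and to bounding their contribution.

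The heart of the argument is that admissibility conditions (A) and (B) in Definition~\ref{def 3} bound this per-pair contribution. In the host configuration arising from the pair $(e_m, e')$, the only intra-part edge available in the bottom $\bigcup_{i \geq 1} F_i$ is $e'$, so $\nu(\bigcup_{i \geq 1} F_i) \leq 1$; condition (B) then forbids any embedding type with $F_0$ containing an edge, while (A) forbids $|V(F_0)| < k - 1$, leaving only types with $|V(F_0)| = k - 1$ and $F_0$ edgeless. A careful case analysis over (i) $e_m \in K[X]$, (ii) $e_m \in X \times V_i$, and (iii) $e_m \in V_i \times V_j$ confirms that each surviving admissible type contributes $O_F(n^{f - k - 2})$ copies per pair, whence $\mathcal{N}_F^{e_m}(H') = O_F((q + \omega) n^{f - k - 2}) = o(n^{f - k - 1})$ for $q, \omega \leq \delta n$ with $\delta$ sufficiently small. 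Hence $\mathcal{N}_F(H) > \mathcal{N}_F(H')$, contradicting the minimality of $\mathcal{N}_F(H)$ and forcing $\omega = 0$. The main obstacle is the complete admissibility-based enumeration of embedding types for each geometric configuration of $(e_m, e')$; this is precisely where admissibility rather than mere color-$k$-criticality is used, and where the non-admissible counterexamples of Section~\ref{Sec:example}---whose ``bad'' embedding types would produce $\Theta(n^{f - k - 1})$ copies per pair and reverse the swap inequality---would otherwise defeat the argument.
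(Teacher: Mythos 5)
Your proposal captures the right strategy (use Claims~\rom{1}--\rom{3}, establish a swap inequality that forces out missing edges via admissibility), and the edge-swap $H'=H-e_b+e_m$ is morally equivalent to the paper's inequality~\eqref{remove an edge and add an edge} used in Claims~\rom{4} and~\rom{5}. There are, however, two issues, one serious and one repairable.

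The serious gap: your argument concludes only when $M^*=\emptyset$, because a local swap $e_b\leftrightarrow e_m$ needs a missing edge $e_m\in M^*$ to add. But $\omega\geq 1$ does not imply $M^*\neq\emptyset$. Unwinding Claim~\rom{3}, $\omega = h(n,r,k)-e(H\setminus B^*)$, and since $H\setminus B^*=H^*\setminus M^*$, we get $e(H^*)=h(n,r,k)+|M^*|-\omega$; thus $\omega>|M^*|$ exactly when $e(H^*)<h(n,r,k)$, i.e., when the parts $V_i\setminus X$ are \emph{unbalanced}. In that regime $H^*\not\cong H(n,r,k)$ (contrary to the isomorphism you assert when introducing $H^*$), you may well have $M^*=\emptyset$ and $\omega\geq 1$ simultaneously, and then there is no $e_m$ to swap in. After establishing $K_{k-1}+K(V_1,\ldots,V_r)\subseteq H$ (which your swap does achieve), the paper closes this gap with a non-local comparison: it forms a competitor $H'$ from the \emph{balanced} graph $H(n,r,k)$ by re-adding a $q$-subset $B_q$ of the bad edges, and shows $\#F(H)>\#F(H')$ using Lemma~\ref{estimate c(n,f)} to quantify the gain from rebalancing against the extra $\omega$ bad edges. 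Without some step of this kind, $\omega\geq 1$ is not refuted.

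The repairable issue: your claim that for a fixed pair $(e_m,e')$ the relevant types have $\nu(\mathcal{F}_\beta)\leq 1$ is not justified, because a copy of $F$ through $e_m$ may use several bad edges, not just $e'$, and the matching number of $\mathcal{F}_\beta$ is a property of the type, not of a single distinguished bad edge. The paper avoids this conflation by summing over all $\ell$-types $\mathcal{F}\in\widehat{Y}_\ell$ and bounding the number of choices of the whole edge-set $E(\mathcal{F}_\beta)$ inside $B^*$ via Lemma~\ref{bounded the numer of graph with given size}: this gives $O_F(|B^*|^{|V(\mathcal{F}_\beta)|-\nu(\mathcal{F}_\beta)-i(\mathcal{F}_\beta)})$, which combined with admissibility ($\nu(\mathcal{F}_\beta)\geq k-\ell+1$ when $F_0$ has an edge) and $c_{\mathcal{F}}(n_1,\ldots,n_r)=O(n^{i(\mathcal{F}_\beta)})$ produces the needed $O_F(\epsilon\cdot n^{f-k-1})$ bound uniformly over all numbers of bad edges used. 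Your ``per pair'' accounting should be replaced by this ``per type'' accounting; the conclusion you want then follows, but it is a genuinely different count than the one you wrote down.
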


To prove this, we need two preliminary lemmas.
The following lemma helps us to bound the number of $\mathcal{F}$-types of admissible color-$k$-critical graphs $F$.

\begin{lemma}\label{bounded the numer of graph with given size}
Let $G$ be a graph with $m$ edges and $F$ be an $f$-vertex graph with minimum degree at lease one.
Then the number of copies of $F$ in $G$ is at most $O_F (m^{f-\nu(F)})$.
\end{lemma}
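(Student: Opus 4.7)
The plan is to count edge-preserving injections $\phi : V(F) \to V(G)$ and then divide by $\Aut(F)$ to obtain $\mathcal{N}_F(G)$. The key structural input is a maximum matching $M$ of $F$ with $|M| = \nu(F) =: \nu$; by maximality, the remaining $f - 2\nu$ vertices $U := V(F) \setminus V(M)$ form an independent set in $F$, since any edge inside $U$ would augment $M$.

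I would then count in two stages. First, I bound the number of ways to determine $\phi$ on $V(M)$: each of the $\nu$ edges of $M$ can be sent to one of the $m$ edges of $G$ in either of its two orientations, contributing at most $(2m)^\nu$ choices. Second, I extend $\phi$ to $U$ one vertex at a time. Since $F$ has minimum degree at least one and $U$ is independent in $F$, every $u \in U$ has some neighbor $v(u) \in V(M)$, and the image $\phi(v(u))$ is already fixed by the first stage; hence $\phi(u)$ must lie in $N_G(\phi(v(u)))$, giving at most $\Delta(G) \le m$ choices. Over the $f - 2\nu$ vertices of $U$ this contributes a factor of at most $m^{f-2\nu}$.

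Multiplying the two bounds and dividing by the constant $\Aut(F)$ yields
\[
\mathcal{N}_F(G) \;\le\; \frac{(2m)^\nu \cdot m^{f-2\nu}}{\Aut(F)} \;=\; O_F\bigl(m^{f-\nu(F)}\bigr),
\]
as desired. There is essentially no obstacle here; the content is simply that $\nu(F)$ is the right exponent, which becomes transparent once a maximum matching is fixed: the independent complement lets us rely on the trivial bound $\Delta(G) \le m$ for each leftover vertex, while each matching edge simultaneously locates two image vertices, saving a factor of $m$ per matched pair.
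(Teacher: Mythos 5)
Your proof is correct and, at its core, uses the same key idea as the paper: fix a maximum matching $M$ of $F$, observe that the unmatched vertices $U = V(F)\setminus V(M)$ form an independent set (so each has a neighbor in $V(M)$), and then count. The paper packages this by extracting a spanning forest $F'$ of $F$ with $\nu$ components (phrased as a star-forest with one matching edge per star) and hence $f-\nu$ edges, then bounds copies of $F'$ by $\binom{m}{f-\nu}$ and notes each lifts to $O_F(1)$ copies of $F$; you instead count edge-preserving injections directly in two stages, spending $(2m)^\nu$ on the matched pairs and $m^{f-2\nu}$ on the unmatched vertices via the neighbor constraint and $\Delta(G)\le m$. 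These are the same argument with different bookkeeping, and yours is if anything slightly more streamlined since it avoids verifying the existence of the specific star-forest decomposition.
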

\begin{proof}
Fix a maximum matching $\{e_1,...,e_\nu\}$ of $F$ where $\nu=\nu(F)$.
Then it is not hard to see that there exists a spanning {\it star-forest} $F^\prime$ (i.e., a forest consisting of stars) in $F$ such that it has $\nu$ stars and each star contains exactly one edge $e_i$ for $i\in [\nu]$.
Then $|V(F^\prime)|=f$, $e(F^\prime)=f-\nu(F)$ and the number of copies of $F^\prime$ in $G$ is at most ${m \choose f-\nu(F)}$. Since a copy of $F^\prime$ in $G$ is contained in at most $2^{O_F(1)}$ copies of $F$ in $G$ and each copy of $F$ in $G$ contains a copy of $F^\prime$ in $G$, the result follows.
\end{proof}

Let $F$ be a color-$k$-critical graph with $\chi(F)=r+1$.
Let $\ell\geq 0$ be an integer and $\mathcal{F}=(F_0, F_1,\ldots,F_r)$ be an $\ell$-type of $F$.
Fix disjoint sets $V_i$ of size $n_i\geq |V(F_i)|$ for $i\in [r]$, where $n/2\leq \sum_{i\in [r]} n_i\leq n$.
Let $K_\mathcal{F}$ be obtained from $\overline{K}_{\ell}+K(V_1,\ldots,V_r)$ by embedding $E(F_0)$ into $\overline{K}_{\ell}$ and $E(F_i)$ into $V_i$ for $i\in [r]$.
Denote by $c_{\mathcal{F}}(n_1,\ldots,n_r)$ the number of copies of $F$ in $K_\mathcal{F}$ containing all edges of $E(\mathcal{F})$.

The following lemma can be easily proven by the same argument as Lemmas~\ref{counting F in H(n,p,k) with adding an edge} and~\ref{estimate c(n,f)}, the details of which are omitted here.
Let $i(G)$ be the number of isolated vertices of a graph $G$.

\begin{lemma}\label{the number copies in l-type}
Let $\mathcal{F}$ be an $\ell$-type of $F$ and $n$ be sufficiently large.
Let $\sum_{i=1}^r n_i=\sum_{i=1}^rn^\prime_i\in [n/2,n]$ where $\max_{i,j} |n^\prime_i-n^\prime_j|\leq 1$.
Define $a_i=n_i-n^\prime_i$ for $i\in[r]$ and $ A=\max\{|a_i|: i\in\{1,\ldots,r\} \}$.
Then there exists a constant $\eta_\mathcal{F}>0$ such that (recall $\mathcal{F}_\beta$ denotes the bottom of the type $\mathcal{F}$)
$$|c_{\mathcal{F}}(n_1,\ldots,n_r)-c_{\mathcal{F}}(n^\prime_1,\ldots,n^\prime_r)|\leq \eta_\mathcal{F}\cdot A \cdot n^{i(\mathcal{F}_\beta)-1},$$
where $c_{\mathcal{F}}(n_1,\ldots,n_r)$ is a multi-polynomial of degree $i(\mathcal{F}_\beta)$.
\end{lemma}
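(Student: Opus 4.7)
The plan is to write $c_\mathcal{F}(n_1,\ldots,n_r)$ explicitly as a finite sum of products of falling factorials, from which both the polynomial degree statement and the Lipschitz-type bound follow in essentially the same manner as in the proofs of Lemmas~\ref{counting F in H(n,p,k) with adding an edge} and~\ref{estimate c(n,f)}.

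First, I would enumerate edge-preserving injections $\phi\colon V(F)\to V(K_\mathcal{F})$ whose image contains every edge of $E(\mathcal{F})$. Because every edge in $E(F_0)$ must be realized and the only within-top edges of $K_\mathcal{F}$ are precisely $E(F_0)$, the preimage $\phi^{-1}(V(F_0))$ must be a subset of $V(F)$ of size exactly $\ell$ whose induced subgraph in $F$ is isomorphic to $F_0$; likewise, for each $i\geq 1$, the non-isolated vertices of the embedded copy of $F_i$ inside $V_i$ must lie in $\phi(V(F))$. Consequently, such a $\phi$ decomposes as the following data: (a) a partition $V(F)=A_0\sqcup A_1\sqcup\cdots\sqcup A_r$ realizing $F$ as a spanning subgraph of $F_0+F_1+\cdots+F_r$ containing $E(\mathcal{F})$; (b) an isomorphism from $F[A_0]$ onto $F_0$ and, for each $i\geq 1$, an isomorphism from the non-isolated part of $F[A_i]$ onto the embedded $F_i$ inside $V_i$; and (c) an injective placement of the $i(F_i)$ isolated vertices of $A_i$ into the $n_i-|V(F_i)|+i(F_i)$ unused positions of $V_i$.

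Since the number of pairs of data in (a)--(b) is a finite quantity depending only on $F$, summing and dividing by $|\mathrm{Aut}(F)|$ would yield
$$c_\mathcal{F}(n_1,\ldots,n_r)=\frac{1}{|\mathrm{Aut}(F)|}\sum_{\text{configurations}}\prod_{i=1}^{r}\bigl(n_i-|V(F_i)|+i(F_i)\bigr)_{i(F_i)},$$
and each summand is a polynomial in $(n_1,\ldots,n_r)$ of multi-degree $(i(F_1),\ldots,i(F_r))$. This confirms that $c_\mathcal{F}$ is a multi-polynomial of total degree $\sum_{i=1}^{r}i(F_i)=i(\mathcal{F}_\beta)$, and its coefficients are bounded by an $F$-dependent constant.

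Finally, for the Lipschitz-type bound I would repeat the Taylor expansion argument of Lemma~\ref{estimate c(n,f)}. Since each partial derivative $\partial c_\mathcal{F}/\partial n_j$ is a polynomial of total degree $i(\mathcal{F}_\beta)-1$ with $F$-dependent coefficients, using $\sum_i n_i\in [n/2,n]$ and $\max_{i,j}|n_i'-n_j'|\leq 1$ one obtains $|\partial c_\mathcal{F}/\partial n_j(n_1',\ldots,n_r')|=O_F(n^{i(\mathcal{F}_\beta)-1})$. Applying the mean value theorem on the segment from $(n_1',\ldots,n_r')$ to $(n_1,\ldots,n_r)$ and using $|a_i|\leq A$ for every $i$ (with only $r$ terms summed), one gets $|c_\mathcal{F}(n_1,\ldots,n_r)-c_\mathcal{F}(n_1',\ldots,n_r')|\leq \eta_\mathcal{F}\cdot A\cdot n^{i(\mathcal{F}_\beta)-1}$ for an appropriate constant $\eta_\mathcal{F}>0$. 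The only subtle point I anticipate is making the enumeration in (a)--(c) precise so that each copy of $F$ containing $E(\mathcal{F})$ is counted with the correct multiplicity; once this combinatorial identity is in hand, the degree computation and the mean value bound are routine polynomial bookkeeping identical to what was done earlier in the paper.
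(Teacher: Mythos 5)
Your overall strategy---writing $c_{\mathcal{F}}$ as an explicit sum of products of falling factorials and then applying a Taylor/mean-value bound---is exactly the route the paper intends (it omits the details, saying the lemma follows as in Lemmas~\ref{counting F in H(n,p,k) with adding an edge} and~\ref{estimate c(n,f)}), and your Lipschitz step at the end is fine. The genuine gap is in the enumeration step, which you flag as ``the only subtle point'' and then defer: it is \emph{not} true that every edge-preserving injection $\phi$ of $F$ into $K_{\mathcal{F}}$ whose image contains $E(\mathcal{F})$ must send exactly $\ell$ vertices onto the top and decompose as in your (a)--(c). The edges of $E(F_0)$ only pin the non-isolated vertices of $F_0$; isolated vertices of $F_0$ (and likewise isolated vertices of the $F_i$) may be mapped into the parts, and a copy may use fewer than $\ell$ top vertices, provided the relocated vertices have no $F$-neighbours placed inside their new part. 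Such copies still contain all of $E(\mathcal{F})$, so under the literal definition of $c_{\mathcal{F}}$ they are counted, your identity undercounts them, and in fact the literal count need not have degree $i(\mathcal{F}_\beta)$ at all. Concretely, take $F=C_5$ (so $r=2$, $k=1$) and the $2$-type $\mathcal{F}=(\overline{K}_2,K_1,K_2)$, which has $i(\mathcal{F}_\beta)=1$: in $K_{\mathcal{F}}$ the $5$-cycles through the embedded edge $a'b'$ whose three remaining vertices are $c,e'\in V_1$ and $d\in V_2$ avoid the top entirely, contain all of $E(\mathcal{F})$, and number $\Theta(n_1^2 n_2)$, i.e.\ they contribute degree $3>1=i(\mathcal{F}_\beta)$.

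What the lemma is really about (and what the proof of Theorem~\ref{Thm:adm} actually uses, since a copy with a different intersection pattern is charged there to its own type) is the number of copies of $F$ in $K_{\mathcal{F}}$ that \emph{realize} the type $\mathcal{F}$: all $\ell$ top vertices are used, the vertices falling in $V_i$ induce exactly $F_i$, and the non-isolated part of $F_i$ sits on the embedded edges. Once you restrict the count in this way (or state this as the working definition of $c_{\mathcal{F}}$), your configuration data (a)--(c) is a faithful parametrization, the falling-factorial formula and the degree computation $\sum_{i\ge 1} i(F_i)=i(\mathcal{F}_\beta)$ are correct, and the mean-value argument gives the claimed bound $\eta_{\mathcal{F}}\cdot A\cdot n^{i(\mathcal{F}_\beta)-1}$. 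As written, however, the forced-decomposition claim is false, and the postponed ``subtle point'' is precisely where the argument (and even the statement, read literally) breaks.
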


We are ready for the proof of Theorem~\ref{Thm:adm}.

\medskip

\noindent{\bf Proof of Theorem~\ref{Thm:adm}.}
Fix an admissible color-$k$-critical graph $F$ with $\chi(F)=r+1$ and $f=|V(F)|$.
Let $1/n\ll \delta\ll \epsilon_1\ll \epsilon_2\ll ... \ll \epsilon\ll 1$ be sufficiently small so that Claims~\rom{1}, \rom{2} and \rom{3} in Subsection~\ref{subsec:refined} hold.
Let $H$ be an $n$-vertex graph on $h(n,r,k)+q$ edges with minimum number of copies of $F$, where $1\leq q\leq \delta n$.
Then we can partition $V(H)=X\cup V_1\cup \ldots \cup V_r$ such that $|X|=k-1$ and the following hold.
Let $M$ be the set of non-edges of $H$ between $X, V_1,\ldots, V_r$, and let $B_i=E(H[V_i])$.\footnote{Using the terminologies $M^*$ and $B^*$ from Subsection~\ref{subsec:refined} (see the paragraph before Claim~\rom{2}), here we have $M=M^*\backslash E[X]$ and $\bigcup_{i\in [r]} B_i=B^*$, where $E[X]$ consists of all edges with both vertices in $X$.}
Let $m=|M|$, $b_i=|B_i|$, $b=\sum_{i=1}^{r}b_i$, and $\omega=b-q$ be from Claim~\rom{3}.
Then $\epsilon n\geq b=q+\omega\geq q+m$.

To show the equality $h_F(n,q)=t_F(n,q)$, it suffices to prove that $H$ contains $H(n,r,k)$ as a subgraph.
We gradually achieve this.
Initially, we establish a crucial inequality as indicated in \eqref{remove an edge and add an edge}.
An edge $uv\in \bigcup_{i\in [r]} B_i$ is called {\it bad}.
Denote by $\#F(uv)$ the number of copies of $F$ of $H$ containing $uv$ as the unique bad edge.
Then $\#F(uv)=\Omega_F(n^{f-k-1})$ by Claim~\rom{3}.
For any $xy\in M$, define
$$\# F^\prime(xy)=\#F(H+xy)- \#F(H)$$
to be the number of {\it transitional} copies of $F$ associated with $uv$, that is, the number of copies of $F$ generated by including the non-edge $xy$ of $H$.
Now we assert that
\begin{equation}\label{remove an edge and add an edge}
\# F^\prime (xy)=\Omega_F(n^{f-k-1}) \mbox{ holds for all $xy\in M$}.
 \end{equation}
To see this, we point out that in fact, $\# F^\prime (xy)\geq \# F(uv)$ for any $xy\in M$ and bad edge $uv$
(as, otherwise, we can reduce the number of copies of $F$ by deleting $uv$ and adding $xy$, a contradiction).

For any copy of $F$ contained in $H$, it corresponds to a unique type $\mathcal{F}=(F_0,F_1,...,F_r)$, namely,
where $F_0$ denotes the induced subgraph of this $F$ within $X$ and, for $i\in [r]$, $F_i$ denotes the induced subgraph of this $F$ within $V_i$.
Let $n_i=|V_i|$ for $i\in [r]$.
We can bound the number of copies of $F$ in $H$ from above by summarizing the number of edge-sets $E(\mathcal{F})$ multiplying $c_{\mathcal{F}}(n_1,\ldots,n_r)$ over all types $\mathcal{F}$.
Utilizing this counting strategy, we will now proceed to demonstrate the following two claims.

For integers $\ell\geq 0$, let $Y_\ell$ be the collection of all $\ell$-types of $F$ and $\widehat{Y}_\ell \subseteq Y_\ell$ be the collection of $\ell$-types of $F$ whose top contains at least one edge.

\medskip

\noindent{\bf Claim \rom{4}.} There exist $k-1$ vertices of $H$ with degree $n-1$.

\begin{proof}
By Claims \rom{1} and \rom{2}, each vertex in $X=\{x_1,\ldots,x_{k-1}\}$ has degree more than $n-\epsilon n$ and $|B|\leq \epsilon n$.
We first show that $H[X]$ is a complete graph.
Suppose that, without loss of generality, there exist $x,y\in X$ with $xy\notin E(H[X])$.
Our goal is to obtain an upper bound of $\#F^\prime(xy)$ that contradicts \eqref{remove an edge and add an edge}.
Note that for any copy of $F$ in $H+xy$ which contains $xy$,
it corresponds to the following unique type $\mathcal{F}=(F_0,F_1,...,F_r)$, where $F_0$ denotes the induced subgraph of this $F$ on $V(F)\cap X$ and for $i\in [r]$,
$F_i$ denotes the induced subgraph of this $F$ on $V(F)\cap V_i$.
Let $\ell=|V(F_0)|$.
Since $xy\in E(F_0)$, the type $\mathcal{F}$ is from $\widehat{Y}_\ell$ for some $2\leq \ell\leq k-1$.
Since $F$ is admissible, we have $\nu(\mathcal{F}_\beta)\geq k-\ell+1$.
Let $\mathcal{N}$ denote the number of edge-sets $E(\mathcal{F}_\beta)=\bigcup_{i\in [r]} E(F_i)$ in $H-X$ where each $E(F_i)\subseteq E(H[V_i])$.
By Lemma~\ref{bounded the numer of graph with given size}, we have
$$\mathcal{N}\leq O_F(|B|^{|V(\mathcal{F}_\beta)|-\nu(\mathcal{F}_\beta)-i(\mathcal{F}_\beta)})\leq O_F(\epsilon)\cdot n^{(f-\ell)-(k-\ell+1)-i(\mathcal{F}_\beta)}\leq O_F\left(\epsilon\cdot n^{f-k-1-i(\mathcal{F}_\beta)}\right).$$
The number of edge-sets $E(\mathcal{F}_\alpha)=E(F_0)$ in $H[X]+xy$ containing $xy$ is $O_F(1)$.
Hence, we have
\begin{align*}
\#F^\prime(xy)\leq \sum_{\ell=2}^{k-1} \sum_{\mathcal{F}\in \widehat{Y}_\ell} O_F(1)\cdot \mathcal{N} \cdot c_{\mathcal{F}}(n_1,...,n_r)\leq O_F\big(\epsilon\cdot n^{f-k-1}\big),
\end{align*}
where the first inequality follows by treating the edges between $V_1,...,V_r$ and $V(F_0)$ are complete for any given edge-set $E(\mathcal{F})$,
and the last inequality holds because of Lemma~\ref{the number copies in l-type} that $c_{\mathcal{F}}(n_1,...,n_r)$ is a multi-polynomial of degree $i(\mathcal{F}_\beta)$. This is a contradiction to \eqref{remove an edge and add an edge}.
Thus, $H[X]$ is a complete graph.

Now we show that $x_i$ is adjacent to each vertex of $V_j$ in $H$ for $i\in [k-1]$ and $j\in [r]$.
Suppose for a contradiction that there is a vertex $y \in V_1$ such that $x_1y\notin E(H)$.
For any copy of $F$ in $H+x_1y$ containing $x_1y$,
it corresponds to a unique type $\mathcal{F}=(F_0,F_1,...,F_r)\in \widehat{Y}_\ell$ with some $2\leq \ell\leq k$,
where $F_0$ denotes the induced subgraph of this copy $F$ on $V(F)\cap (X\cup \{y\})$ and
for $i\in [r]$, $F_i$ denotes the induced subgraph of this $F$ on $V(F)\cap (V_i-\{y\})$.
Since $F$ is admissible, we have $\nu(\mathcal{F}_\beta)\geq k-\ell+1$.
Let $\widehat{V_i}=V_i-\{y\}$ for $i\in [r]$.
Slightly modifying the above argument, we can obtain
\begin{eqnarray*}
\#F^\prime(x_1y)&\leq& \sum_{\ell=2}^{k} \sum_{\mathcal{F}\in \widehat{Y}_\ell} \left(O_F(\epsilon)\cdot n^{f-k-1-i(\mathcal{F}_\beta)}\right) \cdot c_{\mathcal{F}}\big(|\widehat{V_1}|,...,|\widehat{V_r}|\big)\leq O_F\big(\epsilon\cdot n^{f-k-1}\big),
\end{eqnarray*}
a contradiction to (\ref{remove an edge and add an edge}).
The proof of Claim \rom{4} is complete.\end{proof}

\noindent{\bf Claim \rom{5}.} We have $M=\emptyset$.

\begin{proof}
Suppose that $M\neq \emptyset$, say $uv\in M$.
For any copy of $F$ in $H+uv$ containing $uv$,
it corresponds to a unique type $\mathcal{F}=(F_0,F_1,...,F_r)\in \widehat{Y}_\ell$ with some $2\leq \ell\leq k+1$.
Here, $F_0$ denotes the induced subgraph of this copy $F$ on $V(F)\cap (X\cup \{u,v\})$ and
for $i\in [r]$, $F_i$ denotes the induced subgraph of this $F$ on $V(F)\cap (V_i-\{u,v\})$.
Since $F$ is admissible, we have $\nu(\mathcal{F}_\beta)\geq k-\ell+1$.
Let $\widehat{V_i}=V_i-\{u,v\}$ for $i\in [r]$.
Then similarly, we have
$\#F^\prime(uv)\leq \sum_{\ell=2}^{k+1} \sum_{\mathcal{F}\in \widehat{Y}_\ell} \left(O_F(\epsilon)\cdot n^{f-k-1-i(\mathcal{F}_\beta)}\right) \cdot c_{\mathcal{F}}\big(|\widehat{V_1}|,...,|\widehat{V_r}|\big)\leq O_F\big(\epsilon\cdot n^{f-k-1}\big),$
again contradicting (\ref{remove an edge and add an edge}).
This proves Claim~\rom{5}.   \end{proof}

By Claims~\rom{4} and \rom{5}, we see that $K_{k-1}+K(V_1,\ldots,V_r)\subseteq H$.
Recall that $\epsilon n\geq \omega=b-q\geq 0$.
To complete the proof of Theorem~\ref{Thm:adm}, it suffices to show that either $\omega=0$ or $A:=\max_{i,j}\left||V_i|-|V_j|\right|\leq 1$
(if so, then we have $H(n,r,k)\subseteq H$, as desired).
Suppose for a contradiction that $\omega\geq 1$ and $A\geq 2$.
By Lemma~\ref{Mubayi}, we have $A=O(\omega)$.
Fix $B_q$ to be a subset of $\bigcup_{i\in [r]} B_i$ consisting of $q$ edges.
Let $H^{\prime}$ be obtained from $H(n,r,k)$ by adding all edges of $B_q$.
Note that $H^{\prime}$ has the same number of edges as $H$.
Since $F$ is admissible, for any $\mathcal{F}\in Y_\ell$, we have $\nu(\mathcal{F}_\beta)\geq k-\ell$.
Then the number of copies of $F$ in $H^\prime$ satisfies that (for $i\in [r]$ let $n_i'\in \{\lceil (n-k+1)/r \rceil,\lfloor(n-k+1)/r \rfloor\}$ such that $\sum_{i\in [r]} n_i'=n-k+1$)
$$\#F(H') \leq \sum_{\ell=0}^{k-1}\sum_{\mathcal{F}\in Y_\ell}O_F\big(|B_q|^{|V(\mathcal{F}_\beta)|-\nu(\mathcal{F}_\beta)-i(\mathcal{F}_\beta)}\big)\cdot c_\mathcal{F}(n'_1,\ldots,n'_r)  \leq O_F(\epsilon\cdot n^{f-k}).$$
Let $T$ be the number of copies of $F$ in $H$ only using bad edges from $B_q$.
By Lemma~\ref{the number copies in l-type}, we see that
$$|T-\#F(H')|\leq \#F(H')\cdot O_F(A/n)\leq O_F\big(\epsilon\cdot A n^{f-k-1}\big).$$
There are $\omega=b-q$ edges in $\big(\bigcup_{i\in [r]} B_i\big)\backslash B_q$ not used in any copy of $F$ in $H$ contributed to $T$,
hence
\begin{align*}
\#F(H)&\geq T+\omega\cdot c(n_1,...,n_r;F)\\
&\geq \left(\#F(H')-O_F\big(\epsilon\cdot A n^{f-k-1}\big)\right)+\omega\cdot \left(c(n,F)-O_F(A) n^{f-k-2} -O_F(A^2)n^{f-k-3}\right)\\
&\geq \#F(H')-O_F\big(\epsilon\cdot \omega\cdot n^{f-k-1}\big)+\Omega_F(\omega\cdot n^{f-k-1})> \#F(H'),
\end{align*}
where the second inequality holds by Lemma~\ref{estimate c(n,f)} and the third inequality holds because $c(n,F)$ is a polynomial of degree $f-k-1$ and $A=O(\omega)=O(\epsilon n)$.
This contradicts the minimality of $\#F(H)$ and thus completes the proof of Theorem~\ref{Thm:adm}.
\QED

\section{Proof of Theorem~\ref{Thm:hVSt}}\label{section s and t}
The goal of this section is to prove Theorem~\ref{Thm:hVSt}.
To explain and describe the intricate thresholds of Theorem~\ref{Thm:hVSt},
we need to get deeper into the structure of a color-$k$-critical graph $F$.
In the rest of this section, we always assume that $k\geq 2, r\geq 2$ and $F$ denotes a color-$k$-critical graph with $\chi(F)=r+1$.

We begin by introducing some new parameters on $F$.
Let $\lambda(F)$ denotes the minimum size of a subset $A\subseteq V(F)$ satisfying $\chi(F\setminus A)=r$.
Let $\mathbb{X}(F)=\{A\subseteq V(F): |A|=\lambda(F) \mbox{ and } \chi(F\setminus A)=r\}$ be the family of all {\it critical subsets} of $F$.
For a critical subset $A\in \mathbb{X}(F)$, let $\mathcal{V}(A)$ denote the family of all possible partitions $\{U_1,\ldots,U_r\}$ of $V(F\setminus A)$ such that each $U_i$ is stable.
For $A\in \mathbb{X}(F)$ and any integer $\ell\geq 1$, if there exist $x\in A$ and $U_j\in \{U_1,\ldots,U_r\} \in \mathcal{V}(A)$ with $|N_F(x)\cap U_j|\geq \ell$, then let
$$\delta_\ell(A)=\min\{|N_F(x)\cap U_j|:x\in A,~U_j\in \{U_1,\ldots,U_r\}\in \mathcal{V}(A) \mbox{~and~} |N_F(x)\cap U_j|\geq \ell\};$$
otherwise, let $\delta_\ell(A)=\infty$.
We now define two parameters playing crucial roles in this section.
Let $$t(F)=\min_{\mbox{$A\in \mathbb{X}(F): A$ is stable}}\delta_2(A) \mbox{ ~~ and ~~ } s(F)=\min_{\mbox{$A\in \mathbb{X}(F): A$ is not stable}}\delta_1(A).\footnote{Here if none of $A\in \mathbb{X}(F)$ satisfies the requirement, then the corresponding parameter is defined to be $\infty$.}$$
For example, if $F$ consists of $k$ vertex-disjoint copies of $K_{r+1}$, then $t(F)=\infty$ and $s(F)=\infty$.

Now we are able to state the main result of this section, which implies Theorem~\ref{Thm:hVSt}.

\begin{theorem}\label{Theorem-main3}
Let $F$ be given as in the first paragraph of this section with additional properties that $t(F)\in [4,\infty)$ and $s(F)\geq 2$.\footnote{We can prove similar results for the case $t=3$ (in this case, the extremal graphs for the supersaturation problem may be obtained by putting either a triangle or a star into each part of $H(n,r,k)$; see Lemma~\ref{lem:for contain M2} for some hints), but it requires more effort, so we have decided not to pursue it. We would like to treat the case $t=\infty$ in a forthcoming paper.}
Then there exists $\epsilon>0$ such that the following hold for sufficiently large $n$ and
any $n$-vertex graph $H$ on $h(n,r,k)+q$ edges with minimum number of copies of $F$:
\begin{itemize}
\item [(a)] if $1\leq q\leq \epsilon n^{1-1/s(F)}$, then $H$ contains $H(n,r,k)$ as a subgraph, and
\item [(b)] if $n^{1-1/s(F)}/\epsilon\leq q\leq \epsilon n$, then $H$ does not contain $H(n,r,k)$ as a subgraph.
\end{itemize}
\end{theorem}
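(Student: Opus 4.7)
The plan is to leverage the refined structural analysis from Section~\ref{sec_properties-of-H}. By Claims~\rom{1}--\rom{3}, any extremal $n$-vertex graph $H$ with $h(n,r,k)+q$ edges and the minimum number of $F$-copies decomposes as $H=H^*+B^*-M^*$, where $|B^*|=q+\omega$, $|M^*|\leq\omega$, $\omega\leq\epsilon_5 n$, and $\#F(e)\geq f(n,F)-c\omega\,n^{f-k-2}$ for every bad edge $e\in B^*$. Part~(a) reduces to proving $\omega=0$ (equivalently $M^*=\emptyset$), while part~(b) amounts to exhibiting an explicit graph $H^\star$ on $h(n,r,k)+q$ edges with $\omega\geq 1$ and $\#F(H^\star)<t_F(n,q)$.

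For part~(a), I would argue by contradiction. Suppose $\omega\geq 1$ even though $q\leq \epsilon n^{1-1/s(F)}$. Pick any missing edge $e'\in M^*$ and any bad edge $e_0\in B^*$, and consider $H'=H+e'-e_0$, which has $e(H')=e(H)$. Extremality of $H$ forces $\#F(H+e')-\#F(H)\geq \#F(e_0)\geq(1-o(1))f(n,F)$. I would then upper-bound $\#F'(e'):=\#F(H+e')-\#F(H)$ to contradict this. Every $F$-copy through $e'$ corresponds to an embedding type $\mathcal{F}=(F_0,\ldots,F_r)$ with a distinguished $F$-edge $ab$ mapped to $e'$. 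In the principal case $e'=xy$, $x\in X$, $y\in V_1$, the set $A:=V(F_0)\cup\{b\}$ is a critical subset of $F$ that is \emph{non-stable} because it contains the edge $ab$. The hypothesis $s(F)\geq 2$ then forces the dichotomy $|N_F(a)\cap U_j|\in\{0\}\cup[s(F),\infty)$ in any witnessing partition $\{U_1,\ldots,U_r\}$ of $V(F\setminus A)$. In the "dense" branch $|N_F(a)\cap U_1|\geq s(F)$, the $s(F)-1$ other $U_1$-neighbours of $a$ must be embedded into $V_1\setminus\{y\}$ with their images collectively forming an $(s(F)-1)$-star with the images of other ``top-forced'' vertices via bad edges of $B^*$. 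Applying a K\"ov\'ari--S\'os--Tur\'an count (Lemma~\ref{Kab in bipartite graph}) to this bipartite $K_{s(F),\cdot}$-like substructure gives a bound of the order
\[
O_F\!\left(\frac{(q+\omega)^{s(F)}}{n^{s(F)-1}}\right)\cdot n^{f-k-1},
\]
which is $o(f(n,F))$ when $q+\omega\leq \epsilon n^{1-1/s(F)}$. The "sparse" branch $|N_F(a)\cap U_1|=1$, together with embedding types having $|V(F_0)|<\lambda(F)$ (which use multiple bad edges within a single part), is handled analogously; here $t(F)\geq 4$ is used to discard the contribution of embeddings with $\geq 2$ within-part bad edges that are not concentrated at a common vertex. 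The remaining missing-edge types ($e'\subseteq X$ or $e'\in E(V_i,V_j)$) are treated symmetrically. Summing over all types yields $\#F'(e')=o(f(n,F))$, contradicting the lower bound and forcing $\omega=0$.

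For part~(b), I would give an explicit construction. Let $A_0\subseteq V(F)$ be a non-stable critical subset realising $\delta_1(A_0)=s=s(F)$, attained at a vertex $a_0\in A_0$ and a part $U_{j_0}$ with $|N_F(a_0)\cap U_{j_0}|=s$. Starting from $H(n,r,k)$, pick $x^*\in X$ (mirroring $a_0$) and $s$ vertices $w_1,\ldots,w_s$ in the partite set $V_{j_0}$ corresponding to $U_{j_0}$; delete the $s$ host-edges $\{x^*w_i\}_{i=1}^s$; then add $q+s$ bad edges in $V_1$ consisting of a matching of $q$ disjoint edges together with an $s$-edge configuration incident to $w_1,\ldots,w_s$ designed to unlock embeddings that mirror $A_0$. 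The resulting graph $H^\star$ has $h(n,r,k)+q$ edges. Counting $F$-copies in $H^\star$ by embedding type via Lemma~\ref{the number copies in l-type}, the matching part contributes $q\,c(n,F)(1+o(1))$ with no significant cross terms (by $t(F)\geq 4$). The extra $s$ coordinated bad edges would naively add another $s\,c(n,F)$ copies, but the simultaneous deletion of $\{x^*w_i\}_{i=1}^s$ cancels precisely the embeddings where $a_0$ is mapped to $x^*$ with its $s$-neighbourhood in $U_{j_0}$ mapped into $\{w_1,\ldots,w_s\}$. A dual K\"ov\'ari--S\'os--Tur\'an computation quantifies this saving as $\Omega\bigl(q^{s-1}n^{f-k-s}\bigr)$ copies removed, which exceeds $s\,c(n,F)=O(n^{f-k-1})$ precisely when $q\geq n^{1-1/s}/\epsilon$. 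Hence $\#F(H^\star)<q\,c(n,F)(1+o(1))\leq t_F(n,q)$, witnessing the strict inequality claimed in part~(b).

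The hard part will be the tight bilateral counting at the threshold $q=\Theta(n^{1-1/s(F)})$. Both directions rest on K\"ov\'ari--S\'os--Tur\'an estimates controlling the same $K_{s(F),t}$-like bipartite substructure, and the constants in the upper bound for $\#F'(e')$ (in (a)) and the lower bound on the savings (in (b)) must align sharply. Handling all embedding types uniformly--particularly those with $|V(F_0)|<\lambda(F)$ or with bad edges distributed across multiple parts--while using $t(F)\geq 4$ to dispose of unwanted within-part cross terms, requires delicate book-keeping via the type framework from Section~\ref{Section:proof of theorem 1.6}.
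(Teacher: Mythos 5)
Your approach differs substantially from the paper's, and both parts contain genuine gaps.

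For part~(a), your swap argument ($H'=H+e'-e_0$) hinges on the upper bound
$\#F'(e')=O_F\bigl((q+\omega)^{s(F)}n^{f-k-s(F)}\bigr)=o\bigl(f(n,F)\bigr)$,
which requires $q+\omega\leq \epsilon n^{1-1/s(F)}$. But the hypothesis of part~(a) only controls $q$; Claim~\rom{3} gives $\omega\leq\epsilon_5 n$ and nothing better, and $\omega$ could a priori be of order $n$, in which case $(q+\omega)^s n^{f-k-s}$ is of order $\epsilon_5^{s}n^{f-k}\gg f(n,F)$. You would first need an intermediate argument bounding $\omega=O(q)$ (or ruling out missing edges incident to high-bad-degree vertices), and no such step appears in your outline. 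The paper avoids this issue entirely by comparing $\mathcal{N}_F(H)$ directly with $\mathcal{N}_F(H(L))$ for an explicit competitor $H(L)$: it decomposes the copies of $F$ via the type framework of Section~\ref{Section:proof of theorem 1.6} into collections $\mathcal{W}_\ell$, $\mathcal{R}_{I,J}$, $\mathcal{T}^*_i$, carefully constructs subsets $B^*_i\subseteq B_i$ satisfying properties (A1)--(A2), and invokes the key technical Lemma~\ref{lem:for contain M2} (comparing $\mathcal{N}_{M_2}$, maximum degree and binomial coefficients) to establish inequalities \eqref{eq add 1}--\eqref{eq add 2}, from which $\omega=0$ follows. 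None of these ingredients appear in your proposal.

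For part~(b), the construction you describe deletes the $s$ specific edges $\{x^*w_i\}_{i=1}^s$ to the \emph{fixed} vertices $w_1,\ldots,w_s$. The $F$-copies cancelled by this deletion are exactly those mapping $a_0\to x^*$ and $N_F(a_0)\cap U_{j_0}$ onto the fixed set $\{w_1,\ldots,w_s\}$, and there are only $O(n^{f-k-s})$ of them -- nowhere near the $\Omega(q^{s-1}n^{f-k-s})$ you claim (which is itself off by a power of $q$: the correct order one needs is $\Theta(q^s n^{f-k-s})$). The paper instead takes $H'(L^*)$ (or $H'(L_{q^*})$ when $s=2$): an $S_{q+1}$ (or a set of stars) is embedded into the parts and \emph{all} edges inside $X\cup C$ are deleted, where $C$ is the set of star centers. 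Proposition~\ref{ob 3} then gives the genuine saving $\mathcal{N}_F(H(L))-\mathcal{N}_F(H'(L))=\sum_i\Theta(\ell_i^s n^{f-k-s})$ because a copy of $F$ using an edge in $X\cup C$ must, by Proposition~\ref{ob 1}, have each used star center incident to at least $s$ star-edges, and the number of such configurations is $\binom{\ell_i}{s}$, yielding $\Theta(q^s n^{f-k-s})\gg n^{f-k-1}$ precisely when $q\geq n^{1-1/s}/\epsilon$. Your construction with fixed $w_i$'s has no analogue of this $\binom{q}{s}$ multiplicity and so cannot produce the required threshold.
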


Using Theorem~\ref{Theorem-main3}, one can derive Theorem~\ref{Thm:hVSt} promptly in the following.

\begin{proof}[\bf Proof of Theorem~\ref{Thm:hVSt} (assuming Theorem~\ref{Theorem-main3}).]
The case $s=1$ follows by Theorem~\ref{main-example} easily.
Consider $s\geq 2$.
By Theorem~\ref{Theorem-main3} and Lemma~\ref{lem:color-k-critical-is-stable}, it suffices to construct non-bipartite color-$k$-critical graphs $F$ with $t(F)\in [4,\infty)$ and $s(F)=s$.
Such graphs exist as illustrated in Figure~1(b),
where the present graph $F$ is obtained from a copy of $M_k+k\cdot S_{s+1}$ by adding exactly one edge between two centers of these $S_{s+1}$'s (for some $k\geq 3$).
One can verify that such $F$ is color-$k$-critical with $t(F)=k+s-1\geq 4$ and $s(F)=s$.
\end{proof}

In what follows, we prove Theorem~\ref{Theorem-main3} by first establishing some useful properties.
For the proof, we need to consider some special $n$-vertex graphs and use them to derive upper bounds on $h_F(n,q)$.
Fix $1\leq q\leq \epsilon n$ for some small real $\epsilon>0$. Let $L=\{\ell_1, \ldots ,\ell_r\}$ be a set of non-negative integers with $\sum_{i=1}^{r}\ell_i=q$.
Denote by $H(L)$ the graph obtained from $H(n,r,k)$ by adding $r$ stars with $\ell_1,\ldots,\ell_r$ edges into the $r$ parts of $H(n,r,k)$ respectively.
Let $X$ be the vertex set of the clique $K_{k-1}$ in $H(n,r,k)$ and let $C$ be the set of centers of these embedded stars in $H(L)$.\footnote{Note that we also use $X$ to denote the set of the $k-1$ vertices given by Claim~\rom{1} of Section~\ref{sec_properties-of-H}. This may cause confusion at the first sight, but we would like to use $X$ at both circumstances as they refer to the same set of vertices conceptually. If a star is a single edge, then one can choose any one of its vertices as its center.}
Denote by $H^\prime(L)$ the graph obtained from $H(L)$ by deleting all edges inside $X\cup C$.
So $e(H^\prime(L))=e(H(L))-{k-1+\alpha_L \choose 2}$, where $\alpha_L$ denotes the number of positive integers in $L$.

The following propositions are useful for estimating the copies of $F$ in the proof of Theorem~\ref{Theorem-main3}.

\begin{proposition}\label{ob 1}
Each copy of $F$ in $H^\prime(L)$ or $H(L)$ contains at least $k$ vertices in $X\cup C$.
Moreover, if a copy of $F$ in $H(L)$ contains exactly $k$ vertices and at least one edge in $X\cup C$,
then inside this copy $F$, every $x\in V(F)\cap C$ is incident to at least $s(F)$ edges of the embedded star with center $x$.
\end{proposition}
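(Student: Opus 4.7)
The plan is to exploit the structural parallel between the set $X\cup C$ in $H(L)$ and the critical subsets $A\in\mathbb{X}(F)$. The key structural observation driving both parts is that $H(L)-(X\cup C)$ equals the complete $r$-partite graph $K(V_1\setminus C,\ldots,V_r\setminus C)$: each $V_i\setminus C$ is independent in $H(L)$, because the only edges of $H(L)$ inside $V_i$ are the star edges incident to the center $c_i\in C$; and between distinct parts we retain the full bipartite graph from $H(n,r,k)$. Thus any embedded copy of $F$ in $H(L)$ (or in $H'(L)\subseteq H(L)$) induces an $r$-coloring of its vertex set after deletion of $V(F)\cap(X\cup C)$.

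For Part 1, let $A=V(F)\cap(X\cup C)$ and suppose for contradiction that $|A|\leq k-1$. Set $U_i:=V(F)\cap(V_i\setminus C)$ for $i\in[r]$; by the observation above these are independent in $F$, so they witness $\chi(F\setminus A)\leq r$. This contradicts the color-$k$-critical property of $F$, which asserts $\chi(F\setminus A)=r+1$ for every $|A|\leq k-1$. The same argument applies verbatim to $H'(L)$, since $H'(L)\subseteq H(L)$.

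For Part 2, assume $|A|=k$ and that the copy $F$ has at least one edge with both endpoints in $A$. Color-$k$-criticality forces $\lambda(F)=k$; combining $\chi(F\setminus A)\leq r$ (from the same partition) with the fact that removing any $k-1$ vertices keeps $\chi=r+1$ (so deleting one more vertex can drop $\chi$ by at most $1$) yields $\chi(F\setminus A)=r$, hence $A\in\mathbb{X}(F)$ and $(U_i)_{i\in[r]}\in\mathcal{V}(A)$. The edge hypothesis immediately says $A$ is non-stable. Now fix any $x=c_i\in V(F)\cap C$. Since the only $H(L)$-neighbors of $c_i$ inside $V_i$ are the $\ell_i$ leaves of its embedded star, the quantity $|N_F(x)\cap U_i|$ is exactly the number of edges of the embedded star at $c_i$ used by the copy $F$. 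By the definition of $\delta_1(A)$ and $s(F)$, as soon as this quantity is at least $1$, it is automatically at least $\delta_1(A)\geq s(F)$.

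The main obstacle is therefore establishing the strict positivity $|N_F(x)\cap U_i|\geq 1$; I would handle it by contradiction. Suppose $|N_F(x)\cap U_i|=0$, and consider $A':=A\setminus\{x\}$ together with the modified partition $U_i':=U_i\cup\{x\}$ and $U_j':=U_j$ for $j\neq i$. The set $U_i'$ remains independent in $F$ because, by our assumption, none of the star-leaves at $c_i$ lie in $V(F)$, and those are the only $H(L)$-neighbors of $x=c_i$ inside $V_i\setminus C$. The other $U_j'$ are unchanged. Hence $(U_1',\ldots,U_r')$ realizes $\chi(F\setminus A')\leq r$ with $|A'|=k-1$, contradicting color-$k$-criticality. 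This contradiction supplies the missing inequality and completes the proof.
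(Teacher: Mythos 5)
Your argument is correct and follows the paper's own route — identify $A=V(F)\cap(X\cup C)$ as a non-stable critical subset, read off an $r$-partition $(U_1,\ldots,U_r)\in\mathcal{V}(A)$ from the parts of $H(L)\setminus(X\cup C)$, and invoke the definitions of $\delta_1(A)$ and $s(F)$ — with the added value that you spell out the ``each $x\in A$ has a neighbor in its own $U_i$'' step that the paper merely labels as clear. One small wording slip: when assuming $|N_F(x)\cap U_i|=0$ you write ``none of the star-leaves at $c_i$ lie in $V(F)$,'' which is stronger than your hypothesis (a leaf can lie in $V(F)$ without being $F$-adjacent to $c_i$); but since independence of $U_i\cup\{x\}$ only requires $N_F(x)\cap U_i=\emptyset$, which is exactly what you assumed, the conclusion is unaffected.
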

\begin{proof}
Note that after deleting $X\cup C$, both $H^\prime(L)$ and $H(L)$ have chromatic number $r$.
So if a copy of $F$ in $H^\prime(L)$ or $H(L)$ contains at most $k-1$ vertices in $X\cup C$, then we get a contradiction to that $F$ is color-$k$-critical.
If a copy of $F$ in $H(L)$ contains exactly $k$ vertices and at least one edge in $X\cup C$,
then these $k$ vertices form a non-stable set $A\in \mathbb{X}(F)$.
Let $V_1,\ldots,V_r$ denote the $r$ parts of $H(n,r,k)\setminus X$.
It is clear that any vertex $x\in A$ has at least one neighbor in each $V_i\cap V(F\setminus A)$.
By definition of $s(F)$, we see that any $x\in V(F)\cap C\subseteq A$ is incident to at least $s(F)$ edges in its own $V_i$, which must be from the embedded star.
The proof is complete.
\end{proof}

\begin{proposition}\label{ob 3}
Let $t:=t(F)<\infty$ and $s:=s(F)\geq 2$. Then the following hold that
$$\mathcal{N}_F(H^\prime(L))=q\cdot c(n,F)+ \sum_{i=1}^{r} \beta(\ell_i)n^{f-k-t} +\sum_{i\neq j}O(\ell_i\ell_jn^{f-k-2}),$$
where $\beta(x)=ax^t+\Theta(x^{t-1})$ for some absolute constant $a>0$, and
$$\mathcal{N}_F(H(L))=\mathcal{N}_F(H^\prime(L))+ \sum_{i=1}^{r} \Theta (\ell_i^s n^{f-k-s})+\sum_{i\neq j}O(\ell_i\ell_jn^{f-k-3}).$$
\end{proposition}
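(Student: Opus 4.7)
The plan is to prove both asymptotic formulas by classifying copies of $F$ in $H^\prime(L)$ and $H(L)$ according to how they use the star edges and the edges inside $X \cup C$. Proposition~\ref{ob 1} is the main structural input: every copy of $F$ uses at least $k$ vertices in $X \cup C$; in $H^\prime(L)$ (where $X \cup C$ is independent) those vertices must form a stable set of $F$, while in $H(L)$ a copy using an edge inside $X \cup C$ with exactly $k$ vertices there has each $V(F) \cap C$-vertex incident to at least $s := s(F)$ star edges of its own star.

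For $\mathcal{N}_F(H^\prime(L))$, I would first note that copies using no star edges contribute zero, since $H^\prime(L)$ minus all star edges is a subgraph of $H(n,r,k)$ and hence $F$-free. The main term comes from copies using exactly one star edge. Fix such an edge $e = c_i w$; adapting the counting behind Lemma~\ref{counting F in H(n,p,k) with adding an edge} to the present setup (where $X \cup C$ is independent in $H^\prime(L)$) shows that the number of copies containing $e$ as the unique star edge equals $c(n,F) + O(n^{f-k-2})$, the lower-order deviation being absorbed into the two error terms. Summing over the $q$ star edges yields the $q \cdot c(n,F)$ main term. For copies using $\tau_i \geq 2$ star edges from the same star $i$, the embedding necessarily places some $a$ in a stable critical subset $A \in \mathbb{X}(F)$ at $c_i$ with $|N_F(a) \cap U_i| = \tau_i$ for a partition $\{U_j\} \in \mathcal{V}(A)$; the minimum admissible $\tau_i \geq 2$ is $t := t(F) = \min \delta_2(A)$, which produces $\binom{\ell_i}{t} \cdot \Theta(n^{f-k-t}) = \beta(\ell_i) n^{f-k-t}$ copies per star $i$ with $\beta(x) = ax^t + \Theta(x^{t-1})$ and $a > 0$. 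Copies using edges from two distinct stars $i,j$ contribute $O(\ell_i \ell_j n^{f-k-2})$ by picking one edge from each and counting completions.

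For the difference $\mathcal{N}_F(H(L)) - \mathcal{N}_F(H^\prime(L))$, the counted copies are exactly those in $H(L)$ using at least one edge inside $X \cup C$ (the only edges present in $H(L)$ but absent in $H^\prime(L)$). By the second part of Proposition~\ref{ob 1}, such a copy with exactly $k$ vertices in $X \cup C$ must use at least $s$ star edges from some star, all incident to one center $c_i$. The main term is therefore $\Theta(\ell_i^s n^{f-k-s})$, coming from configurations using exactly $s$ such star edges: there are $\binom{\ell_i}{s}$ choices of star edges and $\Theta(n^{f-k-s})$ completions for the remaining vertices. Copies that further involve star edges from a second star together with an inner edge pin down one extra vertex, giving the cross-star correction $O(\ell_i \ell_j n^{f-k-3})$; copies with more than $k$ vertices in $X \cup C$ are of still lower order and are absorbed into these errors.

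The main obstacle will be verifying that the leading constants $a$ in $\beta(x)$ and the hidden constant in $\Theta(\ell_i^s n^{f-k-s})$ are strictly positive, which amounts to exhibiting explicit extremal embeddings: a stable critical $A$ together with a pair $(a, U_j)$ achieving the minimum $|N_F(a) \cap U_j| = t$, and a non-stable critical $A'$ with some $(c, U) $ achieving $|N_F(c) \cap U| = s$. The assumptions $t(F) \in [4, \infty)$ and $s(F) \geq 2$ are precisely what guarantee both configurations exist; moreover, the bound $t \geq 4$ keeps the correction $n^{f-k-t}$ strictly below the cross-star error $n^{f-k-2}$, so the various contributions do not collide and the decomposition remains clean throughout the range $\ell_i = O(\epsilon n)$.
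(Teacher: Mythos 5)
Your proof follows essentially the same route as the paper: classify copies of $F$ in $H'(L)$ by the number of star edges used (zero, one, two or more from one star, or from several stars), apply Proposition~\ref{ob 1} to obtain the $s$-edge forcing for the contribution of copies in $H(L)\setminus H'(L)$ that use an edge inside $X\cup C$, and estimate the cross-star terms as errors. This is precisely the paper's decomposition. Two small points worth flagging: you invoke $t(F)\geq 4$ as part of the proof, but the proposition itself only assumes $t(F)<\infty$ (the stronger $t\geq 4$ is a hypothesis of Theorem~\ref{Theorem-main3}, not of this proposition, and the two stated formulas hold for any finite $t\geq 2$ — the terms $\beta(\ell_i)n^{f-k-t}$ and $O(\ell_i\ell_j n^{f-k-2})$ simply coexist as separate summands, so there is no ``collision'' to avoid at the level of the proposition). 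And your hedge $c(n,F)+O(n^{f-k-2})$ for the one-star-edge count is fine and arguably more careful than the paper's claim of exactness, but it forces you to absorb a $O(qn^{f-k-2})$ error term somewhere; you should note that when $L$ is supported on a single part (so the cross-star error vanishes) this is subsumed by the paper's implicit use of $c(n,F)=f(n,F)$ for $s\geq 2$, under which the per-edge count in $H'(L)$ is in fact exact.
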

\begin{proof}
Each copy of $F$ in $H^\prime(L)$ contains either (1) exactly one edge from the embedded stars, (2) at least two edges from some embedded star and no edges from other stars, or (3) some edges from at least two distinct embedded stars.
For (1), there are exactly $q c(n,F)$ such copies of $F$ (note that as $s(F)\geq 2$, these deleted edges in $E(H(L))\setminus E(H^\prime(L))$ will not affect on this count).
For (2), such a copy of $F$ has a stable set $A\in \mathbb{X}(F)$ consisting of the $k-1$ vertices in $X$ and the center $y$ of the involved star,
and by definition of $t(F)$, in this copy $F$ the vertex $y$ is incident to at least $t=t(F)$ edges of the star.
So there are $\sum_{i=1}^{r} \beta(\ell_i)n^{f-k-t}$ many copies of $F$ of the type $(2)$, where $\beta(x)=ax^t+\Theta(x^{t-1})$ for some $a>0$.
For (3), the number of such copies of $F$ is $\sum_{i\neq j}O(\ell_i\ell_jn^{f-k-2})$ (note that possibly there exists no copy of $F$ of this kind).
Putting all together, we now can derive the equation on $\mathcal{N}_F(H^\prime(L))$.

The estimation on $\mathcal{N}_F(H(L))-\mathcal{N}_F(H^\prime(L))$ can be proved similarly, by classifying among (2) and (3) under the additional condition that such copies of $F$ use at least one edge in $X\cup C$.
By Proposition~\ref{ob 1}, the number of copies of $F$ of the type $(2)$ which also use at least one edge in $X\cup C$ is $\sum_{i=1}^{r} \Theta (\ell_i^s n^{f-k-s})$.
Since $s\geq 2$, the number of copies of $F$ in the type $(3)$ is $\sum_{i\neq j} \Theta (\ell_i^s\ell_j^s n^{f-k-2s})+\sum_{i\neq j}O(\ell_i\ell_jn^{f-k-3})$ (where the second term is the number of copies of $F$ using at least $k+1$ vertices in $X\cup C$).
Using the above arguments, it is easy to derive the equation on $\mathcal{N}_F(H(L))$.
\end{proof}
The following lemma is technical.
In its simplest case $|I|=1$, it provides a lower bound on a linear combination of the number of matchings of size two and the number of stars (say with $t$ edges). 

\begin{lemma}\label{lem:for contain M2}
Fix a real $\alpha>0$, integers $t\geq 3$, $r\geq 2$ and a non-empty set $I$ of indexes with $|I|\leq r$.
Let $1\gg\delta\gg\epsilon\gg 1/n>0$ be sufficiently small compared to $\alpha, t$ and $r$.
For every $i\in I$, let $G_i$ be a graph with $m_i\leq \epsilon n$ edges such that if $t=3$ and $m_i=3$, then $G_i$ is not a triangle.\footnote{It is easy to see that this lemma does not hold if $t=3$ and all $G_i$'s are triangles.}
If there exists an index $j\in I$ with $\Delta(G_j)\leq (1- \delta)m_j$,
then
$$
\alpha\cdot\sum_{i\in I}  \mathcal{N}_{M_2}(G_i)n^{t|I|-2}  \geq \prod_{i\in I}{m_i \choose t}.
$$
If $\Delta(G_i)\geq (1- 3\delta)m_i$ for every $i\in I$, then
$$
\alpha\cdot\sum_{i\in I}  \mathcal{N}_{M_2}(G_i)n^{t|I|-2} + \prod_{i\in I}{\Delta(G_i)\choose t}\geq \prod_{i\in I}{m_i \choose t}.
$$
\end{lemma}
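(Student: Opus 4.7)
My plan is to split on the two hypotheses and reduce each case to two complementary estimates: a lower bound $\mathcal{N}_{M_2}(G_j) = \Omega(m_j^2)$ when $G_j$ is not nearly a star (Case 1), and an upper bound on the correction $c_i := \binom{m_i}{t} - \binom{\Delta(G_i)}{t}$ in terms of $\mathcal{N}_{M_2}(G_i)$ when $G_i$ is nearly a star (Case 2). Throughout I may assume $m_i \geq t$ for every $i \in I$, for otherwise $\prod_i \binom{m_i}{t} = 0$ and both inequalities hold trivially. The key tool for both cases is the identity $\mathcal{N}_{M_2}(G) = \binom{m}{2} - \sum_v \binom{d_G(v)}{2}$, combined with two complementary estimates: the degree-sum bound $\sum_v d_G(v)^2 \leq 2\Delta(G)m$, giving $\mathcal{N}_{M_2}(G) \geq m(m+1 - 2\Delta(G))/2$ (useful when $\Delta(G) \leq m/3$); and a direct cross-pair count over one edge at a maximum-degree vertex $v_1$ and one edge disjoint from it, giving $\mathcal{N}_{M_2}(G) \geq (m - \Delta(G))(\Delta(G) - 2)$ (useful when $\Delta(G) > m/3$).

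In Case 1, a short dichotomy on $\Delta(G_j) \leq (1-\delta)m_j$ yields $\mathcal{N}_{M_2}(G_j) \geq c_\delta m_j^2$: when $\Delta(G_j) \leq m_j/3$ the first bound gives $\mathcal{N}_{M_2}(G_j) \geq m_j^2/6$, and when $m_j/3 < \Delta(G_j) \leq (1-\delta)m_j$ the cross-pair bound gives $\mathcal{N}_{M_2}(G_j) \geq \delta m_j(m_j/3 - 2) = \Omega(\delta m_j^2)$; for $m_j$ below a fixed constant, $\mathcal{N}_{M_2}(G_j) \geq 1$ suffices since the only graphs with $\nu(G) \leq 1$ are stars (excluded by $\Delta(G_j) < m_j$) and triangles (excluded by hypothesis). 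Using $m_i \leq \epsilon n$ to extract a factor $m_j^2$ from $\prod_i \binom{m_i}{t}$ then gives
\[
\prod_{i \in I} \binom{m_i}{t} \leq \frac{m_j^2 \cdot \epsilon^{t|I|-2} \cdot n^{t|I|-2}}{(t!)^{|I|}},
\]
while the left-hand side is at least $\alpha c_\delta m_j^2 n^{t|I|-2}$. The inequality reduces to $\alpha c_\delta (t!)^{|I|} \geq \epsilon^{t|I|-2}$, which holds by the hierarchy $\alpha, \delta \gg \epsilon$ since $t|I|-2 \geq 1$.

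For Case 2, write $d_i = \Delta(G_i)$ and $\Delta_i = m_i - d_i \leq 3\delta m_i$. The elementary estimate $\binom{m_i}{t} - \binom{d_i}{t} \leq \Delta_i \binom{m_i-1}{t-1}$ (any $t$-subset not contained in a fixed $d_i$-subset is specified by an element outside and $t-1$ others) combined with $\mathcal{N}_{M_2}(G_i) \geq \Delta_i(d_i - 2) \geq \Delta_i m_i/2$ (using $d_i \geq (1-3\delta)m_i$ and $m_i$ not too small) and $m_i \leq \epsilon n$ yields the crucial inequality
\[
c_i \leq \frac{2\, m_i^{t-2}}{(t-1)!}\,\mathcal{N}_{M_2}(G_i) \leq C_t\,\epsilon^{t-2} n^{t-2}\,\mathcal{N}_{M_2}(G_i).
\]
Then I will expand
\[
\prod_{i \in I}\binom{m_i}{t} - \prod_{i \in I}\binom{d_i}{t} = \sum_{\emptyset \neq S \subseteq I} \prod_{i \in S} c_i \prod_{i \notin S} \binom{d_i}{t},
\]
and bound each summand using $\binom{d_i}{t} \leq (\epsilon n)^t/t!$ together with $\mathcal{N}_{M_2}(G_i) \leq (\epsilon n)^2$ to absorb all but one of the $\mathcal{N}_{M_2}$-factors in $S$. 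A direct count shows each summand is at most $O_{t,|I|}\!\left(\epsilon^{t|I|-2}\right)\,\mathcal{N}_{M_2}(G_{j^*})\,n^{t|I|-2}$ for some $j^* \in S$; summing over $(S,j^*)$ and choosing $\epsilon$ small against $\alpha$ yields the desired inequality.

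The main obstacle I foresee is the uniform lower bound $\mathcal{N}_{M_2}(G_j) \geq c_\delta m_j^2$ of the second paragraph: the case analysis must simultaneously cover very small $m_j$, the degenerate cases $\Delta(G_j) \in \{1,2\}$, and near-triangle configurations, and the triangle-exclusion hypothesis enters precisely at the boundary where the general argument would otherwise fail.
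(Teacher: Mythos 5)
Your proof is correct and shares the core estimates with the paper's (the cross-pair bound $\mathcal{N}_{M_2}(G)\geq(m-\Delta(G))(\Delta(G)-2)$, the identity $\mathcal{N}_{M_2}(G)=\binom{m}{2}-\sum_v\binom{d(v)}{2}$, and $\binom{m}{t}-\binom{d}{t}\leq(m-d)\binom{m-1}{t-1}$) but organizes them differently. The paper first dispatches any $i$ with $\Delta(G_i)\leq m_i/2$ --- incidentally, its stated estimate $\mathcal{N}_{M_2}(G_i)\geq m_i^2/5$ in that step actually fails for small graphs such as $K_5$, although the conclusion is trivially repaired for bounded $m_i$ --- and for the second conclusion picks a single extremal index $\ell$ maximizing $(m_\ell-\Delta(G_\ell))m_\ell^{t-1}/\binom{m_\ell}{t}$ and folds the entire correction into that one factor via $\prod(1-x_i)\geq 1-\sum x_i$. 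You instead split directly on the two hypotheses, with a cleaner $\Delta$-versus-$m/3$ dichotomy in Case~1 that avoids the $m^2/5$ pitfall, and in Case~2 a full telescoping $\prod_i\binom{m_i}{t}-\prod_i\binom{d_i}{t}=\sum_{\emptyset\neq S\subseteq I}\prod_{i\in S}c_i\prod_{i\notin S}\binom{d_i}{t}$, bounding each summand by $O_{t,r}(\epsilon^{t|I|-2})\,\mathcal{N}_{M_2}(G_{j^*})\,n^{t|I|-2}$ for any $j^*\in S$. The telescoping is more systematic than the extremal-index trick and costs only a $2^{|I|}$ factor, harmless since $|I|\leq r$. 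One detail worth stating explicitly: the bound $\mathcal{N}_{M_2}(G_i)\geq\Delta_i m_i/2$ used for $c_i$ requires $m_i$ above a constant depending on $\delta$, but for $m_i\leq 1/(3\delta)$ the hypothesis $\Delta(G_i)\geq(1-3\delta)m_i$ forces $\Delta(G_i)=m_i$, so $G_i$ is a star, $c_i=0$, and those summands vanish automatically.
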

\begin{proof}
We may assume that $m_i\geq t$ for all $i\in I$ (as otherwise ${m_i \choose t}=0$ and it holds trivially).
If there exists some $i\in I$ with $\Delta(G_i)\leq m_i/2$, then $\mathcal{N}_{M_2}(G_i)\geq {m_i \choose 2}- \sum_\ell  {d_\ell\choose 2}\geq{m_i \choose 2}- 2{m_i/2 \choose 2}\geq m_i^2/5$, where $\{d_\ell\}_\ell$ denotes the degree sequence of $G_i$.
Since $0<\epsilon\ll \alpha$ and $m_i\leq \epsilon n$,  we  have
$$\alpha \cdot \mathcal{N}_{M_2}(G_i)n^{t|I|-2}\geq(\alpha/5)\cdot m_i^2 n^{t|I|-2}\geq  \prod_{i\in I}{m_i \choose t},$$
from which all of the conclusions hold.
Now suppose $\Delta(G_i)> m_i/2$ for every $i\in I$.
Then we claim
\begin{equation}\label{equ:N(M2)}
\mathcal{N}_{M_2}(G_i)\geq (m_i-\Delta(G_i))\cdot m_i/5.
\end{equation}
We note $m_i\geq t\geq 3$.
If $m_i=3$, then $t=3$ and thus $G_i$ is not a triangle. In this case, it is easy to check that \eqref{equ:N(M2)} holds.
So we may assume $m_i\geq 4$.
Then $\mathcal{N}_{M_2}(G_i)\geq (m_i-\Delta(G_i))\cdot (\Delta(G_i)-2)\geq (m_i-\Delta(G_i))\cdot m_i/5,$
where the first inequality holds by a simple fact and the last inequality follows from $\Delta(G_i)> m_i/2$ and $m_i\geq 4$.

If there exists some $j\in I$ with $\Delta(G_j)\leq (1- \delta)m_j$,
then by \eqref{equ:N(M2)} we have $\mathcal{N}_{M_2}(G_j)\geq \delta m_j^2/5$.
Using $n\geq m_j/\epsilon$ and $\delta\gg \epsilon$, we have
\begin{align*}
\alpha \cdot \mathcal{N}_{M_2}(G_j)n^{t|I|-2}\geq (\alpha\delta m_j^2/5)\cdot n^{t|I|-2}\geq \frac{\alpha \delta m_j^t}{5\epsilon^{t-2}}\cdot \prod_{i\neq j}{m_i\choose t }\geq \prod_{i\in I}{m_i \choose t},
\end{align*}
which implies the desired first conclusion.
Now we assume that $\Delta(G_i)\geq (1-3\delta)m_i$ for every $i\in I$.
As $(m_i-\Delta(G_i))/m_i\leq 3\delta$ is sufficiently small,
we can get $$\binom{\Delta(G_i)}{t}\geq \binom{m_i}{t}-O_t(1)\cdot (m_i-\Delta(G_i))\cdot m_i^{t-1}.$$
Using the previous inequality and \eqref{equ:N(M2)}, we can further get that (note that $\epsilon\ll \alpha, t$)
\begin{align*}
\alpha \cdot \mathcal{N}_{M_2}(G_i)n^{t-2}+\binom{\Delta(G_i)}{t}&\geq (\alpha/5\epsilon^{t-2})\cdot(m_i-\Delta(G_i))\cdot m_i^{t-1}+{\Delta(G_i) \choose t }\\
&\geq \binom{m_i}{t}+ (\alpha/6\epsilon^{t-2})\cdot (m_i-\Delta(G_i))\cdot m_i^{t-1}.
\end{align*}
Let $\ell\in I$ be the index which maximizes $(m_\ell-\Delta(G_\ell))\cdot m_\ell^{t-1}/\binom{m_\ell}{t}$.
Then we can derive that
\begin{align*}
&\alpha\sum_{i\in I}\mathcal{N}_{M_2}(G_i)n^{t|I|-2} + \prod_{i\in I}\binom{\Delta(G_i)}{t}\\
\geq &\alpha \cdot \mathcal{N}_{M_2}(G_\ell)n^{t|I|-2} +\binom{\Delta(G_\ell)}{t}\cdot \prod_{i\in I\backslash\{\ell\}}{\Delta(G_i)\choose t}
\geq \bigg(\alpha \cdot \mathcal{N}_{M_2}(G_\ell)n^{t-2}+{\Delta(G_\ell)\choose t}\bigg)\cdot \prod_{i\in I\backslash\{\ell\}}{\Delta(G_i)\choose t}\\
\geq &\bigg(\binom{m_\ell}{t}+ (\alpha/6\epsilon^{t-2})\cdot(m_\ell-\Delta(G_\ell))\cdot m_\ell^{t-1}\bigg)\cdot \prod_{i\in I\backslash\{\ell\}}\bigg(\binom{m_i}{t}-O_t(1)\cdot (m_i-\Delta(G_i))\cdot m_i^{t-1}\bigg)\geq \prod_{i\in I}{m_i \choose t},
\end{align*}
where the last inequality holds by the choice of $\ell$ and the fact that $\alpha/6\epsilon^{t-2}\gg O_t(1)$.
The proof of this lemma is complete.
\end{proof}

We are ready to present the proof of Theorem~\ref{Theorem-main3}.

\medskip

{\noindent \bf Proof of Theorem~\ref{Theorem-main3}.}
Fix $k\geq 2, r\geq 2$ and a color-$k$-critical graph $F$ with $\chi(F)=r+1$ such that $t:=t(F)\geq 4$ and $s:=s(F)\geq 2$.
Let $1\gg\delta\gg \epsilon\gg 1/n>0$ be sufficiently small to satisfy claims of Section~\ref{sec_properties-of-H} and Lemma~\ref{lem:for contain M2} (where $1\gg\delta\gg \epsilon>0$ are from Lemma~\ref{lem:for contain M2} and the constant $\alpha_{\ref{lem:for contain M2}}$ there will be determined later).
Let $H$ be an $n$-vertex graph on $h(n,r,k)+q$ edges with minimum number of copies of $F$, where $1\leq q\leq \epsilon n$.
Then using Claims~\rom{1}, \rom{2} and \rom{3} in Subsection~\ref{subsec:refined}, the following hold.
One can partition $V(H)=X\cup V_1\cup \ldots \cup V_r$, where $|X|=k-1$ and each vertex in $X$ has degree at least $n-\epsilon n$.
Let $M$ be the set of missing edges of $H$ between $X, V_1,\ldots, V_r$, and let $B_i=E(H[V_i])$.
Let $m=|M|$, $b_i=|B_i|$, $b=\sum_{i=1}^{r}b_i$, and $\omega=b-q$ be from Claim~\rom{3}.
Then $\epsilon n\geq b=q+\omega\geq q+m$.

We divide the proof into two parts depending on the range of $q$.
Throughout this proof, we use $B^\ast=\bigcup_{i=1}^r B_i$ and $L_\ell=\{\ell,0,\ldots,0\}$.
For a set of edges $A$, we denote by $\Delta(A)$ the maximum degree of the graph induced by the edges in $A$.

\medskip

{\bf Case (A).} $1\leq q \leq \epsilon n^{1-1/s}$.

\medskip

In this case our goal is to show $H(n,r,k)\subseteq H$.
Recall the definition of $f(n,F)$ before Claim~\rom{3},
and note that $c(n,F)=f(n,F)$ whenever $s\geq 2$.

We first prove the case $s=2$.
This proof is straightforward and reveals the main proof idea, that is, to construct a ``well-designed'' graph with the same numbers of vertices and edges but with less copies of $F$ than $H$.
Let $a:=\max\{q,\omega\}\leq \epsilon n$.
By Claim~\rom{3}, if $\omega\geq 1$, then
$\mathcal{N}_F(H)\geq \sum_{e\in B}F(e)\geq (q+\omega)(c(n,F)- \Theta(\omega) n^{f-k-2})\geq qc(n,F)+\omega c(n,F)-a\Theta(\omega) n^{f-k-2}= qc(n,F)+\Omega(n^{f-k-1})$.
By Proposition~\ref{ob 3}, we have $\mathcal{N}_F(H(L_q))=qc(n,F)+O(q^2)n^{f-k-2}$.
Since $q\leq \epsilon n^{1/2}$, we derive that $\mathcal{N}_F(H)\geq qc(n,F)+\Omega(n^{f-k-1})> \mathcal{N}_F(H(L_q))$,
a contradiction to the minimality of $\mathcal{N}_F(H)$.
Thus $\omega=0$, from which we can derive $H(n,r,k)\subseteq H$ (i.e., see the proof of Claim~\rom{3}), as desired.

From now on we consider the general case $s\geq 3$.
Our proof strategy is (again) to show that whenever $\omega\geq 1$, one can construct an $n$-vertex graph with $h(n,r,k)+q$ edges whose number of copies of $F$ is strictly smaller than $\mathcal{N}_F(H)$.
For that, we need to estimate the number of copies of $F$ more precisely and thus we introduce several notations in the following paragraphs.
First, denote by $f(M_2)$ the minimum number of copies of $F$ obtained from $I_{k-1}+ T_r(n-k+1)$ by adding a copy of $M_2$ to one class of $T_r(n-k+1)$ and removing all edges between $V(M_2)$ and $I_{k-1}$.
Let $i\geq 1, j\geq 0$ be integers with $i+j\leq r$.
Denote by $f_{i,j}(S_{t+1})$ the minimum number of copies of $F$ obtained from $I_{k-1}+ T_r(n-k+1)$ by adding a copy of $S_{t+1}$ to each of $i$ classes of $T_r(n-k+1)$,
adding an edge into each of other $j$ classes of $T_r(n-k+1)$, and removing all edges inside $C^\prime\cup I_{k-1}$,
where $C^\prime$ is the set of centers of embedding stars $S_{t+1}$ and edges.\footnote{Here, we view both vertices of an embedding edge as its centers.}
Denote by $f^\ast(M_\ell)$ the minimum number of copies of $F$ obtained from $I_{k-1}+ T_r(n-k+1)$ by adding one edge to each of $\ell$ classes of $T_r(n-k+1)$ and removing all edges inside $C^{\prime\prime}\cup I_{k-1}$, where $C^{\prime\prime}$ consists of vertices of these $\ell$ edges.
It is not hard to see that there exist reals $\alpha>0, \beta_{i,j}\geq 0$, and $\gamma_\ell\geq 0$ satisfying $f(M_2)=\alpha n^{f-k-2}+O(n^{f-k-3})$, $f_{i,j}(S_{t+1})=\beta_{i,j} n^{f-k-ti-j}+O(n^{f-k-ti-j-1})$ (for $t= \infty$, let $f_{i,j}(S_{t+1})=0$), and $f^\ast(M_\ell)=\gamma_\ell n^{f-k-\ell}+O(n^{f-k-\ell-1})$.

For comparison, we name analogous types of $F$ in $H$.
Let $i\in [r]$. For a copy of $M_2$ in $B_i$, let $F(M_2)$ be the number of copies of $F$ in $H$ containing this $M_2$ as the only edges from $B^\ast$.
For disjoint $I, J\subseteq [r]$, denote by $F_{I,J}(S_{t+1})$ the number of copies of $F$ in $H$ containing a copy of $S_{t+1}$,
whose leaves are completely adjacent to $X$ in $H$, in every $B_{i}$ for $i\in I$ and containing an edge in every $B_{j}$ for $j\in J$.\footnote{Here it is important to require that the leaves of stars $S_{t+1}$ are complete to $X$, for the validation of \eqref{eq: for counting F 2}. Also note that because of $s(F)\geq 3$, every copy of $F$ counted in $F_{I,J}(S_{t+1})$ cannot contain any edge between $X$ and the unique edge in $B_{j}$ for every $j\in J$.}
For a copy of $M_\ell$ with at most one edge in each $B_i$, denote by $F^\ast(M_\ell)$ the number of copies of $F$ in $H$ containing this $M_\ell$ as the only edges in $B^\ast$.
Repeating the proof of Claim~\rom{3}, one can similarly obtain the following estimations: there exists $c>0$ such that
\begin{equation}\label{eq: for counting F 1}
  F(M_2)\geq f(M_2) - c\cdot \omega\cdot n^{f-k-3},
\end{equation}
\begin{equation}\label{eq: for counting F 2}
F_{I,J}(S_{t+1})\geq  f_{|I|,|J|}(S_{t+1})- c\cdot \omega\cdot n^{f-k-t|I|-|J|-1}, \mbox{where $|I|\geq 1$,}
\end{equation}
\begin{equation}\label{eq: for counting F 3}
F^\ast(M_\ell)\geq f^\ast(M_\ell) - c\cdot \omega\cdot n^{f-k-\ell-1}.
\end{equation}

Recall $1\gg\delta\gg \epsilon>0$ from Lemma~\ref{lem:for contain M2} which are defined in the beginning of this proof.
We claim that there exist subsets $B_i^\ast\subseteq B_i$ for each $i\in [r]$ such that
\begin{itemize}
\item[(A1).] $\sum_{i=1}^{r}b^\ast_i \geq b-m\geq q$ (where $b^\ast_i:=|B^\ast_i|$), and
\item[(A2).] either $\Delta(B^\ast_i)<(1-\delta)b^\ast_i$, or there exists a vertex $u$ with $\Delta(B^\ast_i)\geq d_{B^\ast_i}(u)\geq (1-3\delta)b^\ast_i$ and every vertex in $N_{B^\ast_i}(u)$ is completely adjacent to $X$.\footnote{The latter case is consistent with the definition of $F_{I,J}(S_{t+1})$.}
\end{itemize}
To see this, we run the following algorithm (within $H$) for each $i\in [r]$.
Initially, let $B^\ast_i=B_i$ and $b^\ast_i= |B^\ast_i|$.
If $\Delta(B^\ast_i)\geq (1-\delta)b^\ast_i$ and some vertex $v$ of degree one in $B^\ast_i$ is incident to $X$ by a missing edge of $H$,
then we delete the unique edge of $v$ from $B^\ast_i$;
repeat the above process until we cannot delete any edges.
When this process ends, either $\Delta(B^\ast_i)<(1-\delta)b^\ast_i$, or $\Delta(B^\ast_i)\geq (1-\delta)b^\ast_i$.
In the latter case, let $u$ be the vertex with $d_{B^\ast_i}(u)=\Delta(B^\ast_i)$, and all vertices of degree one in $B^\ast_i$ are complete to $X$.
If there is some $v\in N_{B^\ast_i}(u)$ incident to $X$ by a missing edge of $H$ (which must have degree two in $B^\ast_i$ and there are at most $2\delta b^\ast_i$ such vertices), then we delete $uv$ from $B^\ast_i$.
In the end, we see that $d_{B^\ast_i}(u)\geq (1-3\delta)b^\ast_i$ and every vertex in $N_{B^\ast_i}(u)$ is completely adjacent to $X$.
Since the number of the deleted edges is at most $m$ the number of missing edges in $H$,
we have $\sum_{i=1}^{r}b^\ast_i \geq b-m\geq q$.

Hence there exist non-negative integers $\ell_1, \ldots, \ell_{r}$ with $\sum_{i=1}^{r}\ell_i=q$ and $\ell_i\leq b^\ast_i\leq b_i$.
Let $L=\{\ell_1, \ldots, \ell_{r}\}$. Note that $H(L)$ is an $n$-vertex graph with $h(n,r,k)+q$ edges.

In the remaining of the proof, we compare $\mathcal{N}_F(H)$ with $\mathcal{N}_F(H(L))$.
First we consider $H(L)$.
Let $1\leq \ell\leq k$. The number of copies of $F$ containing edges of $\ell$ embedding stars (no edges from other stars) and at least $k-\ell+1$ vertices of $X$ in $H(L)$ is $O(q^\ell n^{f-k-\ell-1})=o(n^{f-k-1})$.
Now we bound the number of copies of $F$ containing edges from $\ell$ embedding stars (no edges from other stars) and exactly $k-\ell$ vertices of $X$ in $H(L)$; call them {\it standard}.
If a standard copy of $F$ contains an edge from $X\cup C$, then by Proposition~\ref{ob 1}, it must contain $s$ edges from some embedding star,
so the number of such standard copies is $O(q^s n^{f-k-s})=O(\epsilon^s)\cdot n^{f-k-1}$, where we use $q\leq \epsilon n^{1-1/s}$.
It remains to consider standard copies of $F$ containing none of the edges from $X\cup C$; call them {\it feasible}.
Let
\begin{itemize}
\item  $\mathcal{W}_\ell=\{$copies of $F$ in $H(L)$ containing exactly $\ell$ independent edges from the embedding stars of $H(L)$ and containing exactly $k-\ell$ vertices of $X\}$, where $1\leq \ell\leq k$,\footnote{We point out that each copy $F$ in $\mathcal{W}_\ell$ for $\ell\geq 1$ is feasible by definition (as $s(F)\geq 2$).}
\item  $\mathcal{R}_{I,J}=\{$feasible copies of $F$ in $H(L)$ containing a copy of $S_{t+1}$
     in each of the embedding stars of sizes $\ell_i$ for $i\in I$,
containing an edge in each of the embedding stars of sizes $\ell_j$ for $j\in J$, and containing exactly $k-|I|-|J|$ vertices of $X\}$, where $I,J\subseteq [r],I\cap J=\emptyset$ and $|I|\geq 1$.
\end{itemize}
By the definition of $t(F)$, each feasible copy of $F$ belongs to either $\bigcup_{\ell=1}^k \mathcal{W}_\ell$ or $\mathcal{R}_{I,J}$ for some $I,J\subseteq [r], I\cap J=\emptyset$ and $|I|\geq 1$.
Putting these all together, we have the following estimation on $\mathcal{N}_F(H(L))$
\begin{equation}\label{eq-for-compare}
q\cdot c(n,F) +\sum_{\ell=2}^{k}|\mathcal{W}_\ell|+\sum_{I,J\subseteq [r],I\cap J=\emptyset, |I|\geq 1}|\mathcal{R}_{I,J}|\geq \mathcal{N}_F(H(L))-O(\epsilon^s)\cdot n^{f-k-1}.
\end{equation}

For the purpose of comparison, we consider the following pairwise disjoint collections of copies of $F$ in $H$ (again by no mean of a partition;
recall the sets $B^\ast_{i}$ from the properties (A1) and (A2)):
\begin{itemize}
\item  $\mathcal{W}^\ast_\ell=\{$copies of $F$ containing exactly $\ell$ independent edges in $\ell$ parts $V_i$'s as the only edges of $\bigcup_{\alpha\in [r]} B_\alpha^\ast$ and containing exactly $k-\ell$ vertices of $X\}$, where $1\leq \ell\leq k$,
\item  $\mathcal{R}^\ast_{I,J}=\{$copies of $F$ containing a copy of $S_{t+1}$ 
    in every $B^\ast_{i}$ for $i\in I$ and an edge in every $B^\ast_{j}$ for $j\in J$ as the only edges in $\bigcup_{\alpha\in [r]} B_\alpha^\ast$ and containing exactly $k-|I|-|J|$ vertices of $X\}$, where $I,J\subseteq [r],I\cap J=\emptyset$ and $|I|\geq 1$,
\item  $\mathcal{T}^\ast_i=\{$copies of $F$ containing a copy of $M_2$ in $B^\ast_i$ as the only edges in $\bigcup_{\alpha\in [r]} B_\alpha^\ast$, where $i\in [r]$.
\end{itemize}
Clearly, we have
\begin{equation}\label{eq-for-compare2}
\mathcal{N}_F(H)\geq \sum_{e\in B^\ast}F(e)+\sum_{\ell=2}^{k}|\mathcal{W}^\ast_\ell|+\sum_{I,J\subseteq [r],I\cap J=\emptyset, |I|\geq 1}|\mathcal{R}^\ast_{I,J}|+\sum_{i=1}^{r}|\mathcal{T}^\ast_i|.
\end{equation}
In the following, we will show that assuming $\omega\geq 1$,
\begin{equation}\label{eq add 1}
|\mathcal{W}^\ast_\ell|\geq |\mathcal{W}_\ell|- O(q^\ell)\omega n^{f-k-\ell-1} \mbox{ for each $2\leq \ell\leq k$ }
\end{equation}
and
\begin{equation}\label{eq add 2}
\sum_{I,J\subseteq [r],I\cap J=\emptyset, |I|\geq 1}|\mathcal{R}^\ast_{I,J}|+\sum_{i=1}^{r}|\mathcal{T}^\ast_i|\geq \sum_{I,J\subseteq [r],I\cap J=\emptyset, |I|\geq 1}|\mathcal{R}_{I,J}|-\Theta(\omega \epsilon n^{f-k-1}).
\end{equation}

Let us first show that to complete the proof for Case (A), it suffices to show \eqref{eq add 1} and \eqref{eq add 2}.
Indeed, by combining \eqref{eq-for-compare2}, \eqref{eq add 1}, \eqref{eq add 2} with Claim~\rom{3} of Section~\ref{sec_properties-of-H}, assuming $\omega\geq 1$ we have
\begin{align*}
\mathcal{N}_F(H)\geq & \sum_{e\in B^\ast}F(e)+ \sum_{\ell=2}^{k} |\mathcal{W}^\ast_\ell| +\sum_{I,J\subseteq [r],I\cap J=\emptyset, |I|\geq 1}|\mathcal{R}^\ast_{I,J}|+\sum_{i=1}^{r}|\mathcal{T}^\ast_i|\\
\geq &(q+\omega)\left(c(n,F)-\Theta (\omega n^{f-k-2})\right)+\sum_{\ell=2}^{k} \left(|\mathcal{W}_\ell|-O(q^\ell\omega n^{f-k-\ell-1})\right)\\
&+\sum_{I,J\subseteq [r],I\cap J=\emptyset, |I|\geq 1}|\mathcal{R}_{I,J}|-\Theta( \omega \epsilon n^{f-k-1})\\
\geq  & qc(n,F)+\sum_{\ell=2}^{k} |\mathcal{W}_\ell|+\sum_{I,J\subseteq [r],I\cap J=\emptyset, |I|\geq 1}|\mathcal{R}_{I,J}|+\omega\cdot \Theta(n^{f-k-1}),
\end{align*}
where the last inequality follows from that $q\leq \epsilon n$ and $c(n,F)$ is a polynomial of degree $f-k-1$ with variable $n$.
If $\omega\geq 1$, then we can derive the following contradiction that
$$\mathcal{N}_F(H)\geq qc(n,F)+\sum_{\ell=2}^{k} |\mathcal{W}_\ell|+\sum_{I,J\subseteq [r],I\cap J=\emptyset, |I|\geq 1}|\mathcal{R}_{I,J}|+ \Theta(n^{f-k-1})> \mathcal{N}_F(H(L)),$$
where the last equality follows by \eqref{eq-for-compare}.
Hence $\omega=0$, which implies that $H$ contains $H(n,r,k)$ as a subgraph, thus proving Case (A).

Turning back to \eqref{eq add 1} and \eqref{eq add 2}, we will first prove \eqref{eq add 1}.
Since $\sum_{i=1}^{r}(b_i-\ell_i)=b-q=\omega\geq 1$, there is an integer $\beta$ with $b_{\beta}-\ell_{\beta}\geq \omega/r$.
Let $\ell'_i=\ell_i$ for each $i\in [r]\backslash \{\beta\}$ and $\ell'_\beta=\ell_\beta+\omega/r$ so that $b_i\geq \ell_i'$ for all $i\in [r]$.
Fix $2\leq j\leq k$.
Recall the definition of $f^\ast(M_j)$, which equals $\gamma_j n^{f-k-j}+O(n^{f-k-j-1})$ for some $\gamma_j\geq 0$.
Since $s(F)=s\geq 3$, we see that all copies of $F$ in $\mathcal{W}_j$ (in $H(L)$) are contributed in the same way as counted in $f^\ast(M_j)$.
So $|\mathcal{W}_j|\leq \sum_{k_1,\ldots,k_j\subseteq  [r]} \ell_{k_1}\ldots \ell_{k_{j}}\big(\gamma_j n^{f-k-j}+O(n^{f-k-j-1})\big)$.
If $\gamma_j=0$, then \eqref{eq add 1} holds trivially.
So assume $\gamma_j>0$.
By \eqref{eq: for counting F 3}, we see that $|\mathcal{W}^\ast_j|-|\mathcal{W}_j|$ equals
\begin{align*}
&\sum_{k_1,\ldots,k_j\subseteq  [r]} b_{k_1}\ldots b_{k_{j}}\left(\gamma_j n^{f-k-j}-c \omega n^{f-k-j-1}\right)-\sum_{k_1,\ldots,k_j\subseteq  [r]} \ell_{k_1}\ldots \ell_{k_j}\big(\gamma_j n^{f-k-j}+O(n^{f-k-j-1})\big)\\
\geq &\sum_{k_1,\ldots,k_j\subseteq  [r]} \ell'_{k_1}\ldots \ell'_{k_{j}}\left(\gamma_j n^{f-k-j}-c \omega n^{f-k-j-1}\right)-\sum_{k_1,\ldots,k_j\subseteq  [r]} \ell_{k_1}\ldots \ell_{k_{j}}\gamma_j n^{f-k-j}-O(q^jn^{f-k-j-1})\\
\geq &\omega/r\cdot \left(\sum_{k_1,\ldots,k_{j-1}\subseteq [r]\setminus \{\beta\}}\ell_{k_1}\ldots \ell_{k_{j-1}}\right)\cdot \gamma_j n^{f-k -j}\\
&-\left(\ell'_\beta\sum_{k_1,\ldots,k_{j-1}\subseteq  [r]\setminus \{\beta\}} \ell_{k_1}\ldots \ell_{k_{j-1}}+\sum_{k_1,\ldots,k_j\subseteq  [r]\setminus \{\beta\}} \ell_{k_1}\ldots \ell_{k_{j}}\right)c \omega n^{f-k-j-1}-O(q^jn^{f-k-j-1})\\
\geq &- O(q^j)\omega n^{f-k-j-1},
\end{align*}
where the last inequality holds because $\ell'_\beta\leq b_\beta\leq b\leq \epsilon n$, $\ell_i\leq q\leq \epsilon n$ for any $i\in [r]\setminus \{\beta\}$, and $\epsilon$ is sufficiently small (i.e., $\epsilon\ll \gamma_j/c$).
This proves \eqref{eq add 1}.

Now we consider \eqref{eq add 2}.
Fix disjoint $I,J\subseteq[r]$ with $|I|\geq 1$.
Then $$|\mathcal{T}^\ast_{i}|=\mathcal{N}_{M_2}(B^\ast_{i})\cdot F(M_2) ~~\mbox{ and } ~~ |\mathcal{R}^\ast_{I,J}|\geq \left(\prod_{i\in I}{\Delta (B^\ast_{i}) \choose t} \prod_{j\in J}b^\ast_j\right) \cdot F_{I,J}(S_{t+1}),$$
where $F(M_2)\geq (\alpha-o(1))\cdot n^{f-k-2}$ and $F_{I,J}(S_{t+1})=(1-o(1))f_{|I|,|J|}(S_{t+1})=(\beta_{|I|,|J|}-o(1))\cdot n^{f-k-|I|t-|J|}$ from \eqref{eq: for counting F 1} and \eqref{eq: for counting F 2} respectively.
Since $t\geq 4$, all $B_i^*$ for $i\in I$ satisfy Lemma~\ref{lem:for contain M2}.
Using the property (A2) and Lemma~\ref{lem:for contain M2} with $\alpha_{\ref{lem:for contain M2}}=\alpha/(2\cdot 3^{r}\beta_{|I|,|J|}+1)$,
\begin{align*}
\frac{1}{3^{r}}\sum_{i\in I}|\mathcal{T}^\ast_{i}|+|\mathcal{R}^\ast_{I,J}|
\geq &\left(\alpha_{\ref{lem:for contain M2}}\cdot \sum_{i\in I}\mathcal{N}_{M_2}(B^\ast_{i}) n^{t|I|-2}+ \prod_{i\in I}{\Delta(B^\ast_{i})\choose t}\right)\cdot \left(\prod_{j\in J}b^\ast_j\cdot F_{I,J}(S_{t+1})\right)\\
\geq &\prod_{i\in I}{b^\ast_{i} \choose t}\cdot \left(\prod_{j\in J}b^\ast_j \cdot F_{I,J}(S_{t+1})\right).
\end{align*}
Using $|\mathcal{R}_{I,J}|=\left(\prod_{i\in I}{\ell_{i} \choose t}\prod_{j\in J}\ell_j\right) \cdot f_{|I|,|J|}(S_{t+1})$,
since $\ell_i\leq b_i^\ast$ and $\ell_i\leq q\leq \epsilon n$ for each $i$, we obtain
\begin{align}\label{equ:1/3r}
\frac{1}{3^{r}}\sum_{i\in I}|\mathcal{T}^\ast_{i}|+  |\mathcal{R}^\ast_{I,J}|-|\mathcal{R}_{I,J}|
\geq &\prod_{i\in I}{b^\ast_{i} \choose t}\cdot \prod_{j\in J}b^\ast_j \cdot F_{I,J}(S_{t+1})-\prod_{i\in I}{\ell_{i} \choose t}\cdot\prod_{j\in J}\ell_j \cdot f_{|I|,|J|}(S_{t+1})\nonumber
\\\nonumber
\geq &\left(\prod_{i\in I}{\ell_{i} \choose t}\prod_{j\in J}\ell_j\right) \cdot (F_{I,J}(S_{t+1})-f_{|I|,|J|}(S_{t+1}))
\geq -\Theta(\omega q^t n^{f-k-t-1}),
\end{align}
where the last inequality holds because of \eqref{eq: for counting F 2} and $|I|\geq 1$.
Summing up the above inequalities for all $I,J\subseteq [r]$ with $I\cap J=\emptyset$ and $|I|\geq 1$ (there are at most $3^r$ many such inequalities), we can easily derive \eqref{eq add 2}.
The proof of Case (A) is complete.

\medskip

{\bf Case (B).} $ n^{1-1/s} /\epsilon  \leq q\leq \epsilon n$.

\medskip

Suppose for a contradiction that $H$ contains $H(n,r,k)$ as a subgraph.
First consider the case $s=2$.
Without loss of generality, let $e(B_1)\geq q/r$.
Note that $\mathcal{N}_{M_2}(B_1)+\mathcal{N}_{S_3}(B_1)\geq{q/r \choose 2}$ and the number of copies of $F$ contains exactly two edges of $B_1$ (which are incident or not) is $\Theta(n^{f-k-2})$.
Let $q^\ast=q+\binom{k}{2}$.
Then since $\epsilon n\geq q\geq n^{1/2}/\epsilon$ and $t\geq 4$, we have
$\mathcal{N}_F(H)\geq q c(n,F)+\Theta(q^2)n^{f-k-2}\geq q^\ast c(n,F)+\Theta(q^t/\epsilon^2)n^{f-k-t}>\mathcal{N}_F(H^\prime(L_{q^\ast}))$, where the last inequality holds by the first inequality of Proposition~\ref{ob 3}.
This is a contradiction as $H^\prime(L_{q^\ast})$ has the same numbers of vertices and edges as $H$.

Assume that $s\geq 3$.
Let $L=\{\ell_1, \ldots, \ell_{r}\}$, where $\ell_i=b_i$ if $b_i\geq 4$ and  $\ell_i=0$ otherwise.
We first compare $\mathcal{N}_F(H)$ with $\mathcal{N}_F(H(L))$.
Fix $\alpha\in [r], I,J\subseteq [r],I\cap J=\emptyset$, $|I|\geq 1$ and $\tau\geq k-|I|-|J|$.
Let $\eta=\min\{s,t\}$ if $\tau=k-|I|-|J|$ and $\eta=2$ if $\tau\geq k-|I|-|J|+1$.
Define
\begin{itemize}
\item  $\mathcal{R}^\prime_{I,J,\tau}=\{$copies of $F$ in $H(L)$ containing a star of size at least $\eta$
     in each of the embedding stars of sizes $\ell_i$ for $i\in I$, containing an edge in each of the embedding stars of sizes $\ell_j$ for $j\in J$, and containing exactly $\tau$ vertices of $X\}$,
\item  $\mathcal{R}^\ast_{I,J,\tau}=\{$copies of $F$ in $H$ containing a star of size at least $\eta$ in $B_i$ for $i\in I$, containing an edge in each $B_j$ for $j\in J$, and containing exactly $\tau$ vertices of $X\}$,
\item  $\mathcal{T}^\ast_\alpha=\{$copies of $F$ in $H$ containing a copy of $M_2$ in $B_i$ as the only edges in $\bigcup_{\beta\in [r]} B_\beta$.
\end{itemize}
We note that for $\tau\geq k-|I|-|J|+1$, both $|\mathcal{R}^\prime_{I,J,\tau}|$ and $|\mathcal{R}^\ast_{I,J,\tau}|$ become lower order terms than when $\tau=k-|I|-|J|$.
Similar as the proof of (\ref{eq add 2}) in Case (A), by Lemma~\ref{lem:for contain M2},
we can show that
\begin{equation}\label{equ:CaseB}
\frac{1}{3^{r}k}\sum_{i\in I}|\mathcal{T}^\ast_{i}|+  |\mathcal{R}^\ast_{I,J,\tau}|\geq |\mathcal{R}^\prime_{I,J,\tau}|.
\end{equation}
Note that $H$ contains a copy of $H(n,r,k)$, so compared with the proof in Case (A), we have $\omega=0$ and thus the counting proof here is easier.
It is also clear that the number of copies of $F$ in $H$ containing exactly $\ell$ independent edges in $\ell$ parts $V_i$'s as the only edges of $\bigcup_{\alpha\in [r]} B_\alpha$ is larger than the number of copies of $F$ in $H(L)$ containing exactly $\ell$ independent edges in $\ell$ embedding stars as the only embedding edges.
Therefore, putting all copies of $F$ together (e.g., summing up \eqref{equ:CaseB} for all $I,J\subseteq [r]$ with $I\cap J=\emptyset$, $|I|\geq 1$ and $\tau$),
we can obtain $$\mathcal{N}_F(H)\geq \mathcal{N}_F(H(L)).$$
Hence, using the second inequality of Proposition~\ref{ob 3}, we get
$$\mathcal{N}_F(H)-\mathcal{N}_F(H^\prime(L))\geq \mathcal{N}_F(H(L))-\mathcal{N}_F(H^\prime(L))\geq \Theta(q^s)n^{f-k-s}.$$
Let $L^\ast=\{\ell_1+{k-1+\alpha_L \choose 2}+\sum_{i=1}^{r}(b_i-\ell_i),\ell_2\ldots,\ell_r\}$, where $\alpha_L$ is the number of positive integers in $L$.
Then $H^\prime(L^\ast)$ has the same number of edges as $H$, and using Proposition~\ref{ob 3} again,
we obtain that $\mathcal{N}_F(H^\prime(L^\ast))=\mathcal{N}_F(H^\prime(L))+\Theta(n^{f-k-1})$.
Finally putting the above all together, since $q\geq n^{1-1/s}/\epsilon$,
\begin{align*}
\mathcal{N}_F(H)&\geq \mathcal{N}_F(H^\prime(L))+\Theta(q^s)n^{f-k-s}\geq \mathcal{N}_F(H^\prime(L))+\Theta(n^{f-k-1}/\epsilon^s)>\mathcal{N}_F(H^\prime(L^\ast)),
\end{align*}
a contradiction. The proof of Theorem~\ref{Theorem-main3} is complete.
\QED

\section{Concluding remarks}
In this paper, we explore the supersaturation problem and present several results, both positive and negative, that extend beyond the existing framework. These findings offer new insights into the complexity and intricate nature of this problem for general graphs.
We now proceed to discuss some remarks and related problems.

Let $F$ be a color-$k$-critical graph with $\chi(F)=r+1$.
In Section~\ref{sec_properties-of-H}, we establish several general properties for supersaturated graphs of $F$ (that is, graphs of given order and size with the minimum number of copies of $F$). Using these properties, one can quickly prove a general lower bound on $h_F(n,q)$ as follows.
Recall the definition of $f(n,F)$ (from Claim~\rom{3} of Subsection~\ref{subsec:refined}), 
which denotes the minimum number of copies of $F$ obtained from $I_{k-1}+ T_r(n-k+1)$ by adding an edge (say $e$) to one class of $T_r(n-k+1)$ and removing all edges between $V(e)$ and $I_{k-1}$.

\begin{theorem}
Fix $k\geq 1$ and any color-$k$-critical graph $F$ with $\chi(F)=r+1$.
Then there exists a constant $\delta = \delta_F>0$ such that if $n$ is sufficiently large and $1\leq q\leq \delta n$, then
$h_F (n, q) \geq q\cdot f(n, F)$.
\end{theorem}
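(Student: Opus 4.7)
The plan is to extract the bound directly from the structural machinery of Section~\ref{sec_properties-of-H}, in particular Claim~\rom{3} of Subsection~\ref{subsec:refined}, without further combinatorial work.

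First I would choose $\delta_F$ sufficiently small so that the hierarchy $\delta\ll\epsilon_1\ll\ldots\ll\epsilon_{14}\ll\epsilon\ll 1$ of Subsection~\ref{subsec:refined} holds, with one additional smallness requirement specified at the end. Let $H$ be an $n$-vertex graph on $h(n,r,k)+q$ edges with the minimum number of copies of $F$, so $\mathcal{N}_F(H)=h_F(n,q)$. Invoking Claims~\rom{1}--\rom{3}, we obtain a vertex partition $V(H)=X\cup V_1\cup\ldots\cup V_r$ with $|X|=k-1$, the bad-edge set $B^*$, the missing-edge set $M^*$, and the parameter $\omega=|B^*|-q$ lying in $[0,\epsilon_5 n]$; in particular $|B^*|=q+\omega\geq q\geq 1$.

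The decisive step is a simple double count. With $F(e)$ from Definition~\ref{Def:F(e)} denoting the number of copies of $F$ in $H$ whose unique edge from $B^*$ is $e$, copies of $F$ using exactly one bad edge are counted once in $\sum_{e\in B^*}F(e)$ and copies using two or more bad edges are not counted at all, hence
\begin{equation*}
\mathcal{N}_F(H)\;\geq\;\sum_{e\in B^*}F(e)\;\geq\;|B^*|\bigl(f(n,F)-c\omega\, n^{f-k-2}\bigr)\;=\;(q+\omega)\bigl(f(n,F)-c\omega\, n^{f-k-2}\bigr),
\end{equation*}
where the second inequality is Claim~\rom{3} with its absolute constant $c=c(F)>0$. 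Rearranging yields
\begin{equation*}
\mathcal{N}_F(H)-q\cdot f(n,F)\;\geq\;\omega\bigl[f(n,F)-(q+\omega)\,c\, n^{f-k-2}\bigr].
\end{equation*}

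Finally, it remains to verify that the bracketed factor is non-negative, which is where the final smallness requirement on $\delta_F$ enters. Since $f(n,F)=(n/r)^{k-1}d(n,F)+O_F(n^{f-k-2})$ (from the paragraph preceding Claim~\rom{3}) and $d(n,F)=\Theta(n^{f-2k})$ (by Lemma~\ref{counting F in Tp(n) with adding a matching}), we have $f(n,F)=\Theta(n^{f-k-1})$ with the implicit constant depending only on $F$. On the other hand, $q\leq\delta_F n$ and $\omega\leq\epsilon_5 n$, so $(q+\omega)\,c\, n^{f-k-2}=O_F((\delta_F+\epsilon_5)\,n^{f-k-1})$. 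Taking $\delta_F$ (and correspondingly the auxiliary constants including $\epsilon_5$) sufficiently small compared with the ratio of the leading coefficient of $f(n,F)$ to $c$, the bracket is non-negative, and the desired bound $h_F(n,q)=\mathcal{N}_F(H)\geq q\cdot f(n,F)$ follows. The main obstacle has already been absorbed into Claim~\rom{3}; the remaining work is essentially bookkeeping with the constants.
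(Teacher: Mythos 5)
Your proposal is correct and takes essentially the same route as the paper's own proof: lower bound $\mathcal{N}_F(H)$ by $\sum_{e\in B^*}F(e)$, apply Claim~III's bound $F(e)\geq f(n,F)-c\omega n^{f-k-2}$ for each $e$, and use that $f(n,F)=\Theta(n^{f-k-1})$ together with $q+\omega=O((\delta_F+\epsilon_5)n)$ to absorb the error. The only cosmetic difference is bookkeeping: you factor out $\omega$ to get the bracket $f(n,F)-(q+\omega)cn^{f-k-2}$, while the paper introduces $a=\max\{q,\omega\}$ and bounds $(q+\omega)\leq 2a$; the two are equivalent.
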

\begin{proof}
Let $H$ be an $n$-vertex graph with $\ex(n,F)+q$ edges and minimum number of copies of $F$, as stated in the beginning of Subsection~\ref{subsec:refined}.
Then we see that Claims~\rom{1}, \rom{2} and \rom{3} in Subsection~\ref{subsec:refined} hold.
Let $\omega$ be from Claim~\rom{3} and let $a=\max\{q,\omega\}$.
Then $a\leq \epsilon_5 n$. By Claim~\rom{3}, the number of copies of $F$ is at least
\begin{align*}
\sum_{e\in B^*}F(e)\geq (q+ \omega)(f(n,F)-  c\cdot \omega\cdot n^{f-k-2})
\geq qf(n,F)+\omega f(n,F)-2a \cdot c\cdot \omega\cdot n^{f-k-2} \geq qf(n,F),
\end{align*}
where the last inequality follows because $f(n,F)$ is a polynomial of degree $f-k-1$ and thus $\omega f(n,F)-2a \cdot c\cdot \omega\cdot n^{f-k-2}\geq 0$ for sufficiently large $n$. The proof is complete.
\end{proof}

This result can be seen as an extension of Theorem~\ref{Thm:Mubayi} since the notation $f(n,F)$ corresponds to $c(n,F)$ when $k=1$.

Let $F$ be a color-critical graph. As mentioned earlier, Pikhurko and Yilma \cite{pikhurko2017} asymptotically determined $h_F(n,q)$ in the range $q=o(n^2)$. Investigating the asymptotic behavior of $h_F(n,q)$ when $q=\Omega(n^2)$ suggests by itself an challenging problem.
A good starting point might be to examine the case when $F$ is an odd cycle.

In Theorem~\ref{main-example}, we show that Conjecture~\ref{con-mubayi} does not hold in the graph case.
As discussed after Theorem~\ref{main-example},
assuming $n$ is sufficiently large, there exist non-bipartite stable graphs $F$ such that $h_F(n,q)<t_F(n,q)$ holds for any fixed integer $q\geq 4$.
This leads us to inquire whether the same result holds for the cases $q\in \{1,2,3\}$.
In contrast to our findings for $q\geq 4$, we speculate that Conjecture~\ref{con-mubayi} holds in the intriguing case $q=1$.
Furthermore, we believe that the equality $h_F(n,1)=t_F(n,1)$ holds for the majority of graphs $F$, regardless of whether it is stable or bipartite.
Consequently, we pose the following question.
\begin{question}
Is it true that for any graph $F$ containing a cycle and for sufficiently large $n$, the equality $h_F(n,1)=t_F(n,1)$ holds?
\end{question}

Based on our current knowledge, all graphs for which the extremal graphs have been determined provide positive evidence for this question.
Nevertheless, it remains an interesting problem to pursue Conjecture~\ref{con-mubayi}
in the context of hypergraphs or graphs with chromatic number three.

\end{document}